\pdfoutput=1
\documentclass[11pt]{article}
\usepackage[margin=1in]{geometry}
\usepackage[T1]{fontenc}
\usepackage{lmodern,microtype}
\usepackage{amsmath,amssymb,amsthm}
\usepackage{booktabs}
\usepackage{flafter}
\usepackage[section]{placeins}
\usepackage{xcolor}
\usepackage{etoolbox}
\usepackage[colorlinks=true,linkcolor=blue!50!black,citecolor=blue!50!black,urlcolor=blue!50!black,bookmarksnumbered=true]{hyperref}
\usepackage{bookmark}
\hypersetup{pdftitle={Positive-temperature spin-glass order on the three-dimensional Migdal\textendash Kadanoff lattice},pdfauthor={Yan Ru Pei},pdfkeywords={Edwards\textendash Anderson model, hierarchical lattice, Migdal\textendash Kadanoff renormalization, spin-glass order, Rogozin inequality, computer-assisted proof, Chayes\textendash Machta\textendash Redner representation, blue-cluster imbalance}}
\newtheorem{theorem}{Theorem}[section]
\newtheorem{proposition}[theorem]{Proposition}
\newtheorem{lemma}[theorem]{Lemma}
\newtheorem{corollary}[theorem]{Corollary}
\newtheorem{mainthm}{Theorem}

\theoremstyle{definition}
\newtheorem{definition}[theorem]{Definition}
\theoremstyle{remark}
\newtheorem{remark}[theorem]{Remark}
\numberwithin{equation}{section}
\newcommand{\E}{\mathbb E}
\newcommand{\Pp}{\mathbb P}

\newcommand{\Z}{\mathbb Z}
\makeatletter\g@addto@macro\bfseries{\boldmath}\makeatother

\newcommand{\1}{\mathbf 1}
\newcommand{\DL}{\mathbb D}
\newcommand{\V}{\mathcal V}
\newcommand{\Imb}{\mathcal I}
\newcommand{\eff}{\mathrm{eff}}
\newcommand{\artanh}{\operatorname{artanh}}
\newcommand{\sech}{\operatorname{sech}}
\newcommand{\sgn}{\operatorname{sgn}}
\newcommand{\Hyp}{\mathrm{H}}
\newcommand{\Ts}{T^{\mathrm s}}
\newcommand{\Tb}{T^{\mathrm b}}
\newcommand{\as}{a^{\mathrm s}}
\newcommand{\ab}{a^{\mathrm b}}
\newcommand{\bareps}{\bar\varepsilon}
\newcommand{\sH}{\mathsf H}
\newcommand{\hsH}{\widehat{\mathsf H}}
\newcommand{\cE}{\mathcal E}
\newcommand{\minT}{\operatorname{min2}}
\newcommand{\lpk}{\preceq}
\newcommand{\gpk}{\succeq}
\newcommand{\repocommit}{21744230ff7d}
\makeatletter
\patchcmd{\l@section}{\addvspace{1.0em \@plus\p@}}{\addvspace{0.55em \@plus\p@}}{}%
  {\PackageWarningNoLine{mk-order}{contents spacing patch failed}}
\makeatother

\title{Positive-temperature spin-glass order\\
on the three-dimensional Migdal--Kadanoff lattice}
\author{Yan Ru Pei\\[0.3em]\small\href{mailto:yanrpei@gmail.com}{\texttt{yanrpei@gmail.com}}}
\date{September 2026}

\begin{document}
\maketitle

\begin{abstract}
We study the Ising spin glass with independent, identically distributed couplings on the diamond
hierarchical lattice with $n$ branches, of effective dimension $1+\log_2 n$; $n=4$ is the
Migdal--Kadanoff lattice of $\mathbb{Z}^3$. For $n\ge4$ and every coupling law with a bounded
density we prove that at sufficiently low temperature $T>0$, uniformly in the level of the lattice,
the poles are ordered, every spin has Edwards--Anderson parameter $1-O(T)$ when the poles are
fixed, and the overlap of two free-boundary replicas has second moment $1-O(T)$. For standard
Gaussian couplings, $n=3,4,5,7,8$ and every $T\le T^*(n)=4/15,\,6/7,\,6/5,\,7/4,\,2$
($0.89$--$0.98$ times numerical estimates of $T_c$), a computer-assisted proof gives pole order
and, as the level tends to infinity, site-overlap order and the blue-cluster statements below;
likewise for every larger $n$ at these temperatures. Previously, spin-glass order on these lattices
with frustrated independent couplings had been proved only for more than $10^{18}$ branches
(Collet and Eckmann, 1984). The key step is growth of the effective coupling under the exact
renormalization map: for $n\ge4$, by a one-step contraction, via Rogozin's inequality, of a
density bound on an event of high probability; near $T_c$, by interval-arithmetic bounds in the
peakedness order. A transfer theorem following the ancestor-chain argument of Collet and Eckmann
turns this growth into bulk statements. In the Chayes--Machta--Redner representation the same
input gives a blue cluster that contains both poles with probability tending to one and whose
expected density is bounded away from zero, while the largest other blue cluster has density
tending to zero in probability and in $L^1$; hence blue-cluster density imbalance. To our
knowledge this is the first proof of such an imbalance for a spin glass on a graph other than the
complete graph.
\end{abstract}

\noindent\textbf{2020 Mathematics Subject Classification.}
Primary 82B44; Secondary 60K35, 82B28, 65G30.
\smallskip

\noindent\textbf{Keywords.} Edwards--Anderson model; hierarchical lattice; Migdal--Kadanoff
renormalization; spin-glass order; Rogozin's inequality; computer-assisted proof;
Chayes--Machta--Redner representation; blue-cluster imbalance.

\tableofcontents

\section{Introduction}\label{sec:intro}

\subsection{Spin glasses on diamond lattices}\label{sec:problem}

Whether the Edwards--Anderson spin glass \cite{EA1975} on $\Z^3$ has an ordered phase at
positive temperature is open. The Migdal--Kadanoff approximation \cite{Migdal1975,Kadanoff1976}
replaces $\Z^d$ by a hierarchical lattice on which the renormalization map is exact
\cite{BerkerOstlund1979,KaufmanGriffiths1981,GriffithsKaufman1982}. With scale factor $2$ (two
pieces in series on each branch) this is the diamond lattice $\DL_N$: $\DL_0$ is one edge between two poles $A$ and $B$, and $\DL_N$
consists of $n$ parallel branches between $A$ and $B$, each made of two copies of $\DL_{N-1}$
joined in series. Its effective dimension is $d_\eff=1+\log_2n$, and $n=2^{d-1}$ is the
Migdal--Kadanoff approximation of $\Z^d$; for $n=4$ it is three-dimensional in this sense.
With independent, identically distributed couplings $J_e\sim J$ at inverse temperature $\beta$,
the effective coupling $\xi_N$ between the poles is defined by requiring that the Boltzmann weight
$\exp(\beta\sum_{e=xy}J_e\sigma_x\sigma_y)$, summed over all spins other than $\sigma_A,\sigma_B$,
be proportional to $e^{\xi_N\sigma_A\sigma_B}$ (so that $\langle\sigma_A\sigma_B\rangle=\tanh\xi_N$
with free poles). It obeys the exact distributional recursion \cite[(3.1)]{CE1984}
\begin{equation}\label{eq:intro-rec}
\xi_{k+1}\overset{d}{=}\sum_{i=1}^{n}\xi_k^{(i,1)}\star\xi_k^{(i,2)},\qquad
a\star b:=\artanh(\tanh a\tanh b),\qquad \xi_0=\beta J,
\end{equation}
with $2n$ independent copies of $\xi_k$ on the right. Spin-glass order at the poles means that
$|\xi_N|\to\infty$ in probability as $N\to\infty$, equivalently
$\E\langle\sigma_A\sigma_B\rangle^2\to1$.

Physicists have analysed \eqref{eq:intro-rec} since the 1970s. Southern and Young \cite{SY1977}
treated the three-dimensional spin glass by this real-space rescaling; domain-wall scaling
arguments \cite{BM1984,McMillan1984} relate the growth of the effective coupling to the
stiffness exponent and the lower critical dimension (the dimension below which there is no
ordered phase at positive temperature); Gardner \cite{Gardner1984} analysed the
limit of many branches; Berker and coworkers determined phase diagrams from the exact
recursion of coupling distributions (see \cite{HB2005} and the references there); and
Nogueira et al.\ \cite{NCNCA1997} studied local magnetizations of the spin glass on diamond
lattices through an exact recursion for single-site magnetizations that Morgado, Coutinho and
Curado \cite{MCC1990} introduced for the ferromagnet. A numerical renormalization-group study of
$\pm J$ couplings on Migdal--Kadanoff lattices with scale factor $3$ places the lower critical
dimension at $2.520$ \cite{DTB2015}; Okuyama and Ohzeki \cite{OO2026} show that, for $\pm J$
couplings and large scale factors, order can be absent at effective dimensions arbitrarily close
to $3$ (their setting does not overlap with ours; see below). Population-dynamics numerics by the
author for \eqref{eq:intro-rec} (iterating the recursion on a large sample drawn from the current
empirical law), run for $n=3,4,5,7,8$, give the following estimates of the critical temperature
$T_c(n)$, at which the typical size of $|\xi_k|$ switches from decay to growth:
$T_c(3)\in[0.28,0.30]$, $T_c(4)\in[0.88,0.89]$, $T_c(5)\in[1.25,1.30]$,
$T_c(7)\in[1.80,1.85]$ and $T_c(8)\in[2.05,2.10]$, in units in which $J_e$ is standard Gaussian.
These numerics are not part of any proof below and are not included in the companion repository.

Rigorous results are few. Collet and Eckmann \cite{CE1984} proved that for large $\beta$,
fixed pole spins and even coupling laws with finite variance and bounded density, every spin has
$\E\langle\sigma_x\rangle^2>1/2$ \cite[Thm.~7.1]{CE1984}, provided the number of branches is even
(a condition they remark can be dropped \cite[p.~385]{CE1984}) and exceeds $n_0$. Their proof
takes $n_0=\max(1600/Q^3,(40\cdot320^3)^2)>10^{18}$ \cite[Thm.~4.1 and p.~386]{CE1984}, where $Q$
depends on the coupling law; they regard $n$ as ``fixed, and sufficiently large throughout''
\cite[p.~384]{CE1984}. Collet, Eckmann, Glaser and Martin \cite{CEGM1984} had treated an
unfrustrated variant in which the couplings of a layer are equal, and, for couplings whose modulus
is almost surely bounded and bounded away from $0$, proved an Edwards--Anderson transition for
every $n\ge2$ \cite[Thms.~4.4 and~5.2]{CEGM1984}; see also
\cite{CEGM1984b} for a simplified recursion. Collet \cite{Collet1985} reviews these models.
Kamphorst \cite{Kamphorst1986}, extending \cite{CE1984} to coupling laws that need not be even,
proved ``the existence of a mixed ferromagnetic (or antiferromagnetic)--spin-glass fixed point for
an Ising spin-glass model on the diamond hierarchical lattice'' (abstract): for $n$ sufficiently
large (no threshold is given), the map induced by \eqref{eq:intro-rec} on coupling densities has a
fixed point, unique in a small neighbourhood of the Gaussian with mean and variance $1/n$, at which
the linearization has two eigenvalues $2+O(n^{-1/2})$ and the rest of its spectrum strictly inside
the unit disk \cite[Thm.~1]{Kamphorst1986}. The fixed point is thus unstable in two directions; the
paper does not treat the flow from given i.i.d.\ couplings or Edwards--Anderson order, and its
phase diagram is derived heuristically \cite[\S2]{Kamphorst1986}. Koukiou \cite{Koukiou1995} proved a
mean-field-type phase transition in the quenched free energy of a Gaussian model on the same
lattices; at $\beta=0$ that model has entropy $\ln2$ per bond, as for independent bond variables
rather than Ising spins at the sites, so it is not the Edwards--Anderson model on $\DL_N$, and the
paper makes no Edwards--Anderson order statement.

In the opposite direction, Okuyama
and Ohzeki \cite{OO2026} proved the absence of spin-glass order for $\pm J$ couplings and even
numbers of branches under a sufficient condition \cite[Thm.~1]{OO2026}, which for scale factor
$2$ holds only for $n=2$; since our theorems assume a density, the two results do not overlap.
They note that rigorous analyses ``have largely been restricted to models with sufficiently
large fractal dimensions'' \cite[p.~1]{OO2026}, that their argument ``does not apply to
continuous coupling distributions such as the Gaussian distribution'', and that ``several
important cases remain open, including the case of an odd branching number, continuous coupling
distributions, and even-branching MK lattices for which the sufficient condition is not
satisfied'' \cite[pp.~4--5]{OO2026}. Here we prove order for coupling laws with a bounded density
at low temperature (Theorem~\ref{thm:A}, $n\ge4$) and for standard Gaussian couplings at every
$T\le T^*(n)$ (Theorem~\ref{thm:B}, including the odd values $n=3,5,7$); $\pm J$ couplings, which
have no density, remain open on these lattices for every $n\ge3$ (Section~\ref{sec:discussion}).

Besides mean-field models such as the Sherrington--Kirkpatrick model \cite{Talagrand2006},
rigorous positive-temperature spin-glass order is known in other settings, for example: on
the Bethe lattice with boundary fields \cite{CCST1986}, for models with local interactions on
expander graphs \cite{Placke2025}, and on the Nishimori line of a Dyson-hierarchical and a
one-dimensional long-range spin glass \cite{OO2026b}. Our results concern a different class:
Migdal--Kadanoff lattices with as few as three branches and frustrated independent couplings with
a density.

\subsection{The obstruction}\label{sec:obstruction}

Collet and Eckmann obtain growth of the effective coupling from growth of its variance, and
they need a density bound to rule out that $\xi_N$ stays at zero with large probability:
``The divergence of the variance as shown in Theorem 4.1.1 does not imply that the random
variable is large (almost surely), since it could take with small probability a very large
value, while with large probability it could take the value zero. The purpose of Theorem 4.1.2
is to exclude this possibility'' \cite[p.~385]{CE1984} (Theorems 4.1.1 and 4.1.2 there are the
two parts of \cite[Thm.~4.1]{CE1984}). Both steps rest on the central limit
theorem in the number of branches (a Berry--Esseen estimate and a local limit theorem), which is
why $n$ must be huge. At small $n$ the sum in \eqref{eq:intro-rec} has three or four terms and
no normal approximation is available; what is needed is anti-concentration of $\xi_k$ at scales
of order one, improving from level to level. Two facts make this delicate. First, the series
map $\star$ can send bounded densities to densities with a logarithmic singularity at $0$
(Remark~\ref{rem:event}), so a density bound cannot be carried through a single branch; ours bounds
the density of the sum over branches, restricted to an event of high probability
(Section~\ref{sec:mechanism}). Second, for $n=3$ no one-step bound on the supremum of the density
can contract at zero temperature: there $a\star b$ becomes $\sgn(ab)\min(|a|,|b|)$, and a uniform
input law yields a density whose supremum is larger by exactly the factor $11/10$
(Remark~\ref{rem:sharp}). By scaling, the same is expected at large finite $\beta$. This limits
one-step bounds, not coupling growth: Theorem~\ref{thm:B} covers $n=3$.

\subsection{Results}\label{sec:results-intro}

Theorem~\ref{thm:A} (analytic) states that for every $n\ge4$ and every law of $J_e$ with a
bounded density there is $\beta_0<\infty$ such that the following hold for every
$\beta\ge\beta_0$. The poles are ordered:
$\E\langle\sigma_A\sigma_B\rangle^2=\E\tanh^2\xi_N\ge1-O(1/\beta)$ for every $N$, and
$\E\langle\sigma_A\sigma_B\rangle^2\to1$ as $N\to\infty$. Uniformly in $N$, with fixed pole spins
every spin has Edwards--Anderson parameter $\E\langle\sigma_x\rangle^2\ge1-O(1/\beta)$, and with
free poles the site overlap $R_{12}=|\V_N|^{-1}\sum_x\sigma_x\tau_x$ ($\V_N$ the vertex set) of
two replicas $\sigma,\tau$ (independent samples from the Gibbs measure with the same couplings)
satisfies $\E\langle R_{12}^2\rangle\ge1-O(1/\beta)$. The blue-cluster statements of
Section~\ref{sec:cmr-intro} also hold. For standard Gaussian couplings, Table~\ref{tab:thresholds}
gives explicit values of $\beta$ above which explicit versions of these lower bounds are positive,
verified in exact rational arithmetic; for $n=4$ this is $\beta\ge520$, that is $T\le1/520$,
compared with $T^*(4)=6/7$ in Theorem~\ref{thm:B}.

Theorem~\ref{thm:B} (computer-assisted) states that for standard Gaussian couplings,
$J_e\sim N(0,1)$, and $n=3,4,5,7,8$ ($n=6$ is covered by Corollary~\ref{cor:Bn} below), at every
$T\le T^*(n)$, where $T^*(n)=4/15,\,6/7,\,6/5,\,7/4,\,2$, the effective coupling grows
geometrically (for each $u$, $\Pp(|\xi_k|\le u)$ decays geometrically in $k$) and the poles are
ordered; and, as $N\to\infty$, the second moment of the site overlap, the expected density of the
blue cluster $C(A)$ of the pole $A$ and the second moment of the blue-cluster imbalance (both
defined in Section~\ref{sec:cmr-intro}) have lower limits bounded below by explicit positive
constants, while no other blue cluster has positive density. Theorem~\ref{thm:B} contains no
fixed-pole statement, and its constants are astronomically small (Table~\ref{tab:points}). A
certificate (a finite set of explicit laws checked by computer, Section~\ref{sec:mechanism}) at
$T^*(n)$ covers every lower temperature, because its only temperature-dependent condition, that
its first law be more peaked than the law of $\beta|J_e|$, becomes easier as $\beta$ grows
(Theorem~\ref{thm:bridge}). The temperatures $T^*(n)$ are roughly $0.89$ to $0.98$ times the
population-dynamics estimates of $T_c(n)$ (a heuristic comparison, not part of the theorem). For
$n=3$ the effective dimension is $\log_26\approx2.585$, and Theorem~\ref{thm:B} shows that, with
standard Gaussian couplings, this lattice has spin-glass order at every $T\le4/15$. Each
certificate also covers every larger number of branches (Corollary~\ref{cor:Bn}): for symmetric
unimodal laws, adding an independent branch makes the effective coupling less peaked, that is,
less concentrated near zero in the peakedness order of Section~\ref{sec:mechanism}, and the map is
monotone in this order. In particular $n=6$ is covered for $T\le6/5$ and every $n\ge9$ for $T\le2$.

For frustrated i.i.d.\ couplings (for instance symmetric laws) with a density, these are to our
knowledge the first proofs of positive-temperature spin-glass order on a Migdal--Kadanoff lattice
with fewer than $10^{18}$ branches; they include $n=4$, the lattice on which the Migdal--Kadanoff
recursion for $\Z^3$ is exact.

\subsection{The mechanism}\label{sec:mechanism}

Both theorems consist of coupling growth followed by one transfer theorem.

\emph{Coupling growth for $n\ge4$} (Section~\ref{sec:partI}). The idea of controlling the
density of the effective coupling at every scale, in order to exclude mass near zero, is due to
Collet and Eckmann \cite[Thm.~4.1(2), Remark~2 and Lemma~A2]{CE1984}, who obtain it from a local
limit theorem in $n$. We replace that step by a one-step contraction valid for small $n$. We
follow a bound $r_k$ on the density of $\xi_k$ restricted to an event $E_k$, determined by the
couplings of one scale-$k$ sub-diamond and of probability at least $1-w_k$. A branch is
\emph{good} if both of its halves lie in their events and their moduli are separated: the larger
is at least $R$ and exceeds the smaller by at least $L$. Given the half with the larger modulus,
the other half is confined to the range below it, of probability $W$, and there has density at
most $r_k/(\Pp(E_k)W)$. At zero temperature the branch value is that smaller half up to sign. Up
to the cut $L$, $W$ is the distribution function of $|\xi_k|$ given $E_k$, evaluated at the larger
of the two moduli; as the larger of two independent uniform variables, $W$ has density $2v$ on $[0,1]$. Averaging over the larger half the uniform law of width $\Pp(E_k)W/r_k$ that matches this
density bound, with the weight $\Pp(E_k)^2$ of both halves lying in their events, therefore gives
at most the triangle $r_k\,p_2(r_kx)$, $p_2(x)=2(1-2|x|)_+$: the factor $1/W$ in the density bound
is cancelled by the probability $W$ of the range. Rogozin's inequality \cite{Rogozin1987}, which bounds the density of a sum
of independent variables by that of a sum of uniform variables with the same density bounds,
applied conditionally on the larger halves, then bounds the density of the sum over $g$ good
branches by $r_k\kappa_g$, $\kappa_g=p_2^{*g}(0)$. Exactly,
\[
\kappa_1=2,\qquad\kappa_2=\tfrac43,\qquad\kappa_3=\tfrac{11}{10},\qquad\kappa_4=\tfrac{302}{315}<1,
\]
and $\kappa_g$ is nonincreasing in $g$. Branches that are not good only shift the sum. A branch
fails to be good with probability at most $\varpi=2w_k+O(r_k)$, and the new event $E_{k+1}$ is
that some branch is good, so $w_{k+1}=\varpi^n$ and $r_{k+1}=\Gamma r_k$ with
$\Gamma=\vartheta\sum_g\binom ng\kappa_g\varpi^{n-g}$, where the factor $\vartheta\ge1$ accounts
for positive temperature and tends to $1$ as $L,R\to\infty$ (Proposition~\ref{prop:A}). For
$n\ge4$, small $\varpi$ and large $L,R$, $\Gamma$ is close to $\kappa_n<1$. This is where
$\beta\ge\beta_0$ enters: once $r_0=\phi/\beta$ is small ($\phi$ bounds the density of $J_e$),
$r_k$ and $w_k$ decay geometrically, so $\sum_k(r_k+w_k)=O(1/\beta)$; since
$\Pp(|\xi_k|<t)\le w_k+2r_kt$, this is the small-ball input of the transfer theorem below, and it
forces $|\xi_k|\to\infty$ in probability. At zero temperature the bound $r_k\kappa_g$ is attained
and $\kappa_3=11/10>1$, which is why $n=3$ is out of reach of one-step bounds on the supremum of
the density.

\emph{Coupling growth near $T_c$} (Section~\ref{sec:partII}). For symmetric unimodal inputs the
exact one-step map preserves symmetric unimodality and is monotone in the peakedness order, in
which $X$ is more peaked than $Y$ if $\Pp(|X|\le t)\ge\Pp(|Y|\le t)$ for all $t\ge0$
(Lemma~\ref{lem:U} and Corollary~\ref{cor:monotone}). A finite chain of explicit staircase laws
(an atom at $0$ plus a nonincreasing piecewise-constant density on a uniform grid, so that the
distribution function is concave and piecewise linear), each more
peaked than the true law at its level, is checked by exact and interval arithmetic until the
law enters a scale-covariant family $\{L_0\lambda^iX^{(\alpha_i)}\}_{i\ge0}$; here $X^{(\alpha)}$ is
$0$ with probability $\alpha$ and has a fixed shape law $X_c$ otherwise, $L_0$ is a scale (unrelated to
the cut $L$ above), and
$\alpha_i\to0$. The invariance of the family under the map, with growth factor $\lambda>1$, is
itself a finite check: it reduces to a finite set of inequalities between explicit
staircase laws, checked once, and Theorem~\ref{thm:family} propagates it to every level.
This gives $|\xi_k|\to\infty$ geometrically.

\emph{Transfer to the bulk} (Section~\ref{sec:transfer}). A spin $x$ of level $j$, the midpoint
of a branch of a scale-$j$ sub-diamond, sits in a chain of nested sub-diamonds $D_N\supset\dots\supset D_j$. Given the pole spins of $D_k$,
the midpoint $y_k$ of the branch leading to $x$ feels the field
$\xi^{(1)}\sigma_a+\xi^{(2)}\sigma_b$ from the two halves of that branch; this is the exact
single-site recursion of \cite{MCC1990,NCNCA1997}. Conditional on the pole spins in both replicas, when the endpoint overlaps
have a common value $q$, the midpoint overlap differs from $q$ with probability
at most $\frac12\sech^2\Delta_k$,
$\Delta_k=||\xi^{(1)}|-|\xi^{(2)}||$.
A union bound over the ancestor chain gives order at $x$
(Lemma~\ref{lem:S}). The architecture---an ancestor
chain, a failure event governed by the gap between the halves, small-ball input from a density
bound, a sum over levels---is that of Collet and Eckmann \cite[\S VII, Lemma~7.8]{CE1984};
we use it in a two-replica, union-bound form that needs no symmetry of the coupling law, applies
to free and fixed poles alike, and takes as input any small-ball profile, that is, bounds on
$\Pp(|\xi_k|<t)$ and on $\sup_y\Pp(||\xi_k|-y|<t)$ for $k\ge0$, $t>0$
(Theorem~\ref{thm:transfer}). The order statements are proved without the cluster representation
of Section~\ref{sec:cmr-intro}.

\subsection{Blue clusters and imbalance}\label{sec:cmr-intro}

In the Chayes--Machta--Redner (CMR) representation \cite{CMR1998} of two replicas, an edge
$e=xy$ that is satisfied ($J_e\sigma_x\sigma_y>0$) in both replicas is blue with probability
$1-e^{-4\beta|J_e|}$, and the overlap $q_x=\sigma_x\tau_x$ is constant on blue clusters (compare
the signed-connectivity identity of Machta, Newman and Stein \cite[eq.~(16)]{MNS2008}); blue bonds
enter only the cluster statements. Machta, Newman and Stein \cite{MNS2008} proposed that
spin-glass order appears as two percolating blue clusters of unequal densities, and proved for
the Sherrington--Kirkpatrick model that the two largest blue clusters have densities
$(1\pm|R_{12}|)/2$, $R_{12}$ the overlap of the two replicas, up to errors that vanish in
probability \cite[Thm.~1]{MNS2007}. One additional ingredient is a two-terminal crossing lemma
(Lemma~\ref{lem:X}): if neither of two vertices of a graph has its spin fixed, the probability
that they are not blue-connected is at most $1-\tanh|\xi|$, where $\xi$ is the effective coupling
between them. With it, the same coupling growth gives the following on $\DL_N$. Under
Theorem~\ref{thm:A}(iv), with probability bounded below by a positive constant uniformly in $N$,
the blue cluster $C(A)$ of the pole $A$ contains both poles and has density bounded below by a positive constant, and $C(A)$ contains both poles with probability tending to one, since $\Pp(A\not\leftrightarrow B)\le P_N\le w_N+2\ln2\,r_N\to0$ (Theorem~\ref{thm:transfer}, item~4, and Lemma~\ref{lem:ratesH}); under Theorem~\ref{thm:B}(i),(iii), $C(A)$ contains both poles with
probability tending to one and its expected density has a positive lower limit as $N\to\infty$.
In both cases no other blue cluster has positive density, in probability and in $L^1$, and the
imbalance $\E[((|C_+|-|C_-|)/|\V_N|)^2]$, where $C_\pm$ are the largest blue clusters with
overlap $\pm1$, is bounded away from zero (uniformly in $N$ under Theorem~\ref{thm:A}, as a lower
limit under Theorem~\ref{thm:B}). On the hierarchical lattice this imbalance is degenerate: one
giant pole cluster plus sub-extensive dust. A blue cluster that avoids the poles and has more
than $(2n)^{-m}|\V_N|$ vertices must contain one of the $v_m$ vertices of level above $N-m$, a
number independent of $N$ (Lemma~\ref{lem:struct}(d)), and these are blue-connected to the poles
with high probability (Theorem~\ref{thm:transfer}, item~5). So only one of the two percolating
clusters of the Machta--Newman--Stein picture appears. To our knowledge this is the first proof of
blue-cluster imbalance, in this sense, for frustrated couplings (for instance symmetric laws) on a
graph other than the complete graph. It is a statement about hierarchical lattices. On $\Z^d$, in
local limits of periodic-torus measures, the blue subgraph has at most two infinite clusters
\cite{Pei2026b}, and blue percolation has been proved in high dimensions \cite{Pei2026}, but
imbalance remains open.\footnote{An earlier claim, by the author and Di Ventra,
of a finite-temperature spin-glass transition in $d\ge2$ via blue clusters
(\href{https://arxiv.org/abs/2105.01188}{arXiv:2105.01188})
was withdrawn by its authors (v4, 19 September 2022) after an error was found in its contour
estimate.}

\subsection{Organization}\label{sec:organization}

Section~\ref{sec:model} defines the model and states Theorems~\ref{thm:A} and~\ref{thm:B}.
Section~\ref{sec:recursion} collects the structure of the lattice, the exact recursion, the
Markov structure and the crossing lemma. Section~\ref{sec:transfer} proves the transfer theorem,
which turns small-ball bounds on the effective couplings into the bulk statements of both
theorems; Sections~\ref{sec:partI} and~\ref{sec:partII} supply these bounds.
Section~\ref{sec:partI} proves coupling growth for $n\ge4$ and Theorem~\ref{thm:A};
Section~\ref{sec:partII} proves coupling growth from certificates and Theorem~\ref{thm:B};
Section~\ref{sec:computation} states what the computer verifies. Section~\ref{sec:discussion}
discusses open problems. Appendix~\ref{app:proofs} contains the proofs of routine lemmas.
Appendix~\ref{app:computation} describes the certificates for Theorem~\ref{thm:A}, proves the
reduction of the bridge condition to pairwise sums (Lemma~\ref{lem:decomp}), and gives the
arithmetic, the checks, the validation and the reproduction of the computation.

\section{Model and results}\label{sec:model}

\subsection{The diamond lattice}\label{sec:lattice}

Fix $n\ge1$. Let $\DL_0$ be the graph with two vertices $A,B$ and one edge. For $N\ge1$, $\DL_N$
consists of $n$ \emph{branches} between $A$ and $B$; each branch is two copies of $\DL_{N-1}$,
its \emph{halves}, glued in series at a new vertex, the \emph{midpoint} of the branch. We call
$N$ the \emph{level} of $\DL_N$. Every copy
of $\DL_k$ that arises in this construction is a \emph{scale-$k$ sub-diamond} $D$, with poles
$a_D,b_D$ and interior $\operatorname{int}D:=V(D)\setminus\{a_D,b_D\}$. A scale-$k$ sub-diamond
with $k\ge1$ has $n$ branches, and the two halves of a branch are edge-disjoint scale-$(k-1)$
sub-diamonds. A vertex $x\notin\{A,B\}$ has \emph{level} $k_0(x)=j$ if it is the midpoint of a
branch of a scale-$j$ sub-diamond, denoted $D(x)$.

Write $\V_N=V(\DL_N)$ and $v_k:=|\operatorname{int}\DL_k|$. Then $|E(\DL_N)|=(2n)^N$,
$v_0=0$, $v_k=n(1+2v_{k-1})$, so $v_N=n((2n)^N-1)/(2n-1)$ and $|\V_N|=v_N+2$. The number of
level-$j$ vertices is $N_j=n(2n)^{N-j}$ ($1\le j\le N$), and exactly
\begin{equation}\label{eq:counts}
\frac{N_j}{|\V_N|}=(2n-1)(2n)^{-j}(1-\varrho_N),\qquad
\varrho_N:=\frac{3n-2}{n(2n)^N+3n-2}.
\end{equation}
The graph distance between the poles is $2^N$ and the number of edges is $2^{Nd_\eff}$ with
$d_\eff=1+\log_2n$.

\subsection{Gibbs measures, replicas and blue bonds}\label{sec:measures}

The couplings $J=(J_e)_{e\in E(\DL_N)}$ are independent and identically distributed real
random variables. At inverse temperature $\beta\ge0$ the \emph{free-boundary} (or
\emph{free-pole}) Gibbs measure on
$\{-1,1\}^{\V_N}$ is $\mu^{\beta,J}_N(\sigma)\propto\exp(\beta\sum_{e=xy}J_e\sigma_x\sigma_y)$,
with all spins, including $\sigma_A$ and $\sigma_B$, summed. The \emph{fixed-pole} measure with
pole spins $(s_A,s_B)\in\{-1,1\}^2$ is $\mu^{\beta,J}_N$ conditioned on $\sigma_A=s_A$,
$\sigma_B=s_B$. We write $\langle\cdot\rangle$ for the Gibbs expectation, and
$\langle\cdot\rangle_{s_A,s_B}$ when the pole spins are fixed. Two \emph{replicas}
$\sigma,\tau$ are independent given $J$ and have the same Gibbs measure. Put
$q_x:=\sigma_x\tau_x$ and $R_{12}:=|\V_N|^{-1}\sum_{x\in\V_N}q_x$. We write $\Pp,\E$ for the
joint law of couplings, replicas and bonds. For all vertices $x,y$ and either boundary condition,
\begin{equation}\label{eq:twopoint}
\E\langle\sigma_x\sigma_y\rangle^2=\E[q_xq_y],\qquad \E\langle\sigma_x\rangle^2=\E[q_x],
\end{equation}
because $\langle\sigma_x\sigma_y\rangle^2=\langle\sigma_x\sigma_y\rangle\langle\tau_x\tau_y\rangle$.

\emph{Blue bonds} \cite{CMR1998,MNS2008}. Given $(J,\sigma,\tau)$, each edge $e=xy$ is
independently \emph{blue} with probability $1-e^{-4\beta|J_e|}$ if $J_e\sigma_x\sigma_y>0$ and
$J_e\tau_x\tau_y>0$, and is not blue otherwise. Summing over bonds returns the two-replica
measure. A blue edge has $\sigma_x\sigma_y=\tau_x\tau_y$, so $q$ is constant on blue clusters.
Write $x\leftrightarrow y$ if $x$ and $y$ are joined by blue edges, and $C(x)$ for the blue
cluster of $x$. Let $C_\pm$ be a largest blue cluster with $q\equiv\pm1$ (empty if there is
none), let
\begin{equation}\label{eq:imb}
\Imb_N:=\frac{|C_+|-|C_-|}{|\V_N|}
\end{equation}
be the \emph{imbalance}, and let $M_N:=\max\{|C|:C\text{ a blue cluster},\ A\notin C\}$ ($0$ if
there is none).

\subsection{Effective couplings}\label{sec:effective}

For a sub-diamond $D$ and $s,s'\in\{-1,1\}$ let $Z_D(s,s')$ be the Boltzmann sum over the
interior spins of $D$ with pole spins $(s,s')$ and couplings of $D$ only, and
$\xi_D:=\frac12\log(Z_D(+,+)/Z_D(+,-))$. We write $\xi_k$ for the effective coupling of a
scale-$k$ sub-diamond; its law does not depend on $N$ and satisfies \eqref{eq:intro-rec}
(Lemma~\ref{lem:rec}).

\subsection{Theorem A: analytic order for \texorpdfstring{$n\ge4$}{n ≥ 4}}\label{sec:thmA}

We assume
\begin{itemize}
\item[\textup{(D)}] the $J_e$ are independent and identically distributed, and their law has a
density bounded by $\phi<\infty$.
\end{itemize}
No symmetry and no moment condition is assumed. The proof follows two numbers per level: $r_k$
bounds the density of $\xi_k$ on an event $E_k$ determined by the couplings of one scale-$k$
sub-diamond, and $w_k$ bounds $\Pp(E_k^c)$ (Definition~\ref{def:H}). A \emph{certificate}
(Definition~\ref{def:certA}) is a sequence $(r_k,w_k)_{k\ge0}$ of such bounds, propagated level by
level by Proposition~\ref{prop:A} and, from some level on, continued geometrically by
Lemma~\ref{lem:T} (the certificate \emph{closes} there). Despite the name it is an analytic
object: Lemma~\ref{lem:E} constructs one for every large $\beta$, and a computer is used only to
find and verify explicit certificates for standard Gaussian couplings (Tables~\ref{tab:thresholds}
and~\ref{tab:oneshot}). The constants $\ell_{\rm s},\ell_{\rm b},S,P$ of a certificate, defined
in \eqref{eq:consts}, sum or maximize per-level bounds over the levels: $\ell_{\rm s}$ bounds the
probability that the overlap at a non-pole vertex differs from a common value of the overlap at
the poles, $\ell_{\rm b}$ the probability that a vertex lies in neither blue pole cluster, and
$S\ge\E\sech^2\xi_N$ and $P\ge\Pp(A\not\leftrightarrow B)$ are the corresponding pole terms
(Section~\ref{sec:transfer}). The per-level rate of the blue chain is bounded
(Lemma~\ref{lem:ratesH}) by $\bar a^{\rm b}(r_{k-1},w_{k-1})$, where
\begin{equation}\label{eq:abarb}
\bar a^{\rm b}(r,w):=\tfrac34w+(1+2\ln2)\,r+w^2+wr+\tfrac12r^2 .
\end{equation}

\begin{mainthm}[analytic]\label{thm:A}
Let $n\ge4$ and assume \textup{(D)}. There is $\beta_0=\beta_0(n,\phi)<\infty$ such that for every
$\beta\ge\beta_0$ there is a certificate $(r_k,w_k)_{k\ge0}$ whose constants
$\ell_{\rm s},\ell_{\rm b},S,P\in[0,1)$ of \eqref{eq:consts} satisfy $2\ell_{\rm s}+S<1$ and
$2\ell_{\rm b}+\sqrt P<1$ (equivalently $(1-2\ell_{\rm b})_+^2>P$), and the following hold, the
inequalities for every $N\ge1$.
\begin{enumerate}
\item[\textup{(i)}] \emph{Pole order.} With free poles, $\E\langle\sigma_A\sigma_B\rangle^2\ge1-S$,
and $\E\langle\sigma_A\sigma_B\rangle^2\to1$ as $N\to\infty$.
\item[\textup{(ii)}] \emph{Edwards--Anderson order with fixed poles.} For all
$s_A,s_B\in\{-1,1\}$ and every $x\in\V_N\setminus\{A,B\}$,
$\E\langle\sigma_x\rangle_{s_A,s_B}^2\ge1-2\ell_{\rm s}$. If the law of $J_e$ is symmetric,
then also $\E\langle\sigma_x\rangle_{s_A,s_B}=0$.
\item[\textup{(iii)}] \emph{Site-overlap order.} With free poles,
$\E\langle R_{12}^2\rangle\ge(1-2\ell_{\rm s}-S)_+^2>0$.
\item[\textup{(iv)}] \emph{A giant blue cluster containing the poles.} With free poles, for every
$\theta\in(0,1)$,
\[
\Pp\big(B\in C(A),\ |C(A)|>(1-\theta)|\V_N|\big)\ge1-\ell_{\rm b}/\theta-P,
\]
which is positive for $\theta\in(\ell_{\rm b}/(1-P),1)$.
\item[\textup{(v)}] \emph{At most one giant blue cluster.} With free poles, for $1\le m\le N$,
\[
\Pp\big(M_N>(2n)^{-m}|\V_N|\big)\le\epsilon_{N,m},\qquad
\E M_N/|\V_N|\le(2n)^{-m}+\epsilon_{N,m},
\]
where $\epsilon_{N,m}:=w_N+2\ln2\,r_N+v_m\sum_{k=N-m+1}^{N}\bar a^{\rm b}(r_{k-1},w_{k-1})$ tends to
$0$ as $N\to\infty$ for each fixed $m$. Hence $M_N/|\V_N|\to0$ in probability and in $L^1$.
\item[\textup{(vi)}] \emph{Blue imbalance.} With free poles,
$\E\,\Imb_N^2\ge(1-2\ell_{\rm b})_+^2-P>0$.
\end{enumerate}
The constants $\ell_{\rm s},\ell_{\rm b},S,P$ are $O(1/\beta)$ as $\beta\to\infty$ for fixed
$(n,\phi)$, so the lower bounds in \textup{(i)--(iii)} and \textup{(vi)} tend to $1$.
Items \textup{(i)--(vi)} hold, with the constants of the certificate, at every $\beta$ at which a
certificate exists, except that the positivity assertions in \textup{(iii)} require
$2\ell_{\rm s}+S<1$ and those in \textup{(iv)} and \textup{(vi)} require $2\ell_{\rm b}+\sqrt P<1$;
and a certificate at $\beta_1$ is a certificate at every $\beta\ge\beta_1$.
\end{mainthm}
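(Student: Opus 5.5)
The proof will assemble three ingredients that the paper develops later: the certificate machinery of Section~\ref{sec:partI} (Definition~\ref{def:H}, Proposition~\ref{prop:A}, Lemma~\ref{lem:T}, Lemma~\ref{lem:E}), the rate bounds of Lemma~\ref{lem:ratesH}, and the transfer theorem (Theorem~\ref{thm:transfer}) together with the crossing lemma (Lemma~\ref{lem:X}) and the structural count of Lemma~\ref{lem:struct}(d).

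The first step is existence of a certificate for $\beta\ge\beta_0$. At level $0$ take $E_0$ to be the sure event, so $w_0=0$, and set $r_0=\phi/\beta$, since $\xi_0=\beta J$ has density at most $\phi/\beta$. Proposition~\ref{prop:A} propagates $(r_k,w_k)$ with $w_{k+1}=\varpi^n$ and $r_{k+1}=\Gamma r_k$. For $n\ge4$, choosing $L,R$ large (so $\vartheta$ is close to $1$) and keeping $\varpi=2w_k+O(r_k)$ small makes $\Gamma\le\gamma<1$, because $\kappa_g\le\kappa_4=302/315<1$ for $g\ge 4$ and the terms with $g<4$ carry a factor $\varpi^{n-g}$. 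The real difficulty is the first few levels. There $R,L$ must be large compared with the typical size of $|\xi_k|\sim\beta$, and the probability that a half has modulus below $R$ is $O(r_kR)$. I would therefore take $R,L$ to be fixed constants, independent of $\beta$, and rely on $r_k=O(1/\beta)$ to make $r_kR$ small. This is what Lemma~\ref{lem:E} should deliver; Lemma~\ref{lem:T} then closes the certificate geometrically. Consequently $\sum_k(r_k+w_k)=O(1/\beta)$, and the constants $\ell_{\rm s},\ell_{\rm b},S,P$ of \eqref{eq:consts}, being sums or maxima of per-level rates such as $\bar a^{\rm b}$ in \eqref{eq:abarb}, are all $O(1/\beta)$. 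For $\beta$ large this yields $2\ell_{\rm s}+S<1$ and $2\ell_{\rm b}+\sqrt P<1$.

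Monotonicity in $\beta$ should come from scaling. Raising $\beta$ multiplies $\xi_0$ by a factor $c\ge1$, which lowers the density bound at level $0$. The map $\star$ is monotone in modulus, and the per-level bounds of Proposition~\ref{prop:A} depend only on density bounds on events and on the thresholds. One checks that each hypothesis of Definition~\ref{def:certA} weakens as $\beta$ grows, so a certificate at $\beta_1$ remains a certificate at every $\beta\ge\beta_1$.

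The second step is the transfer. Given the small-ball profile $\Pp(|\xi_k|<t)\le w_k+2r_kt$ and $\sup_y\Pp(||\xi_k|-y|<t)\le w_k+2r_kt$, Theorem~\ref{thm:transfer} gives each item.
\begin{itemize}
\item Item (i) follows from $\E\tanh^2\xi_N=1-\E\sech^2\xi_N\ge1-S$; the limit holds because $w_N,r_N\to0$.
\item For item (ii), the ancestor chain of Lemma~\ref{lem:S} with a union bound gives $\Pp(q_x\ne q_{\rm poles})\le\ell_{\rm s}$. With fixed poles the pole overlap is $1$, so \eqref{eq:twopoint} gives $\E\langle\sigma_x\rangle^2=\E q_x\ge1-2\ell_{\rm s}$. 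For symmetric laws, flipping the signs of the couplings on the path structure (a gauge argument) gives $\E\langle\sigma_x\rangle=0$.
\item For item (iii), $\E R_{12}^2\ge(\E|R_{12}|)^2$. Moreover $|R_{12}|\ge 1-2|\V_N|^{-1}\#\{x:q_x\ne q_A\}$ when $q_A=q_B$, and $\Pp(q_A\ne q_B)\le\E\sech^2\xi_N/2$. Averaging gives the bound $1-2\ell_{\rm s}-S$.
\item For items (iv) and (vi), use Lemma~\ref{lem:X} for $\Pp(A\not\leftrightarrow B)\le P$ and Markov's inequality on the expected number of vertices outside both pole clusters, which is at most $\ell_{\rm b}|\V_N|$. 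Then $\Imb_N^2\ge(|C(A)|/|\V_N|-M_N/|\V_N|)^2$ on $\{A\leftrightarrow B\}$, and Cauchy--Schwarz gives $(1-2\ell_{\rm b})^2-P$.
\item For item (v), by Lemma~\ref{lem:struct}(d) a large non-pole cluster must contain one of the $v_m$ top-level vertices. A union bound over their blue-connection failures, with per-level rates $\bar a^{\rm b}$, plus $P_N\le w_N+2\ln2\,r_N$, gives $\epsilon_{N,m}$. Since $\bar a^{\rm b}(r_{k-1},w_{k-1})\to0$ for $k>N-m$, we get $\epsilon_{N,m}\to0$.
\end{itemize}

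The main obstacle is the uniform-in-$\beta$ control of the first levels in Lemma~\ref{lem:E}: showing that $\varpi$ is small already at level $0$, where the halves' moduli are of order $\beta$ but still need a separation $L$ with high probability, with a constant $\vartheta$ not depending on $\beta$.
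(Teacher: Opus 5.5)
Your route is the paper's. Lemma~\ref{lem:E}, with $L,R$ fixed independently of $\beta$ and $r_0=\phi/\beta$, gives a certificate closing at $K=0$. Lemma~\ref{lem:ratesH} turns it into a small-ball profile with the constants \eqref{eq:consts}, and items 1--6 of Theorem~\ref{thm:transfer} give (i)--(vi). Several steps are misstated, though all are repairable with the lemmas you cite.

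\textbf{The concentration bound is off by a factor $2$.} $\Hyp_k(r_k,w_k)$ bounds the density of $\xi_k$, not of $|\xi_k|$. The set $\{x:||x|-y|<t\}$ is two intervals of total length up to $4t$, so only $\sup_y\Pp(||\xi_k|-y|<t)\le w_k+4r_kt$ follows (Lemma~\ref{lem:Hcons}). For $\xi_k$ uniform on $[-1/(2r_k),1/(2r_k)]$ and interior $y$, the left side equals $4r_kt$, so your bound $w_k+2r_kt$ is false in general. The per-level terms $\frac12w+2r$ in $\ell_{\rm s}$ and $(1+2\ln2)r$ in \eqref{eq:abarb} are exactly what $Q_k(t)=w_k+4r_kt$ produces. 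With the correct profile your conclusions stand.

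\textbf{The ``main obstacle'' you name is not one.} Your sentence that $R,L$ must be large compared with $|\xi_k|\sim\beta$ is backwards. They must be large in absolute terms, so that $\vartheta=(1+e^{-2L})/\tanh R$ satisfies $\vartheta\kappa_n<1$, and small compared with $1/r_k$. $\vartheta$ involves only $L,R$. At level $0$, $\varpi(r_0,0)=(2r_0R)^2+4r_0L$ with $r_0=\phi/\beta$. Lemma~\ref{lem:E} fixes $L,R$ first and then takes $r^*$ small. What keeps the failure probabilities from accumulating over levels is the quadratic control $w_k\le Ar_k^2$ in Lemma~\ref{lem:T}, which your sketch passes over.

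\textbf{Monotonicity in $\beta$ needs no scaling argument.} The only $\beta$-dependent condition in Definition~\ref{def:certA} is $r_0\ge\phi/\beta$, which becomes easier as $\beta$ grows.

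\textbf{Item (vi) needs positive parts and the right order of steps.} The inequality $\Imb_N^2\ge\big((|C(A)|-M_N)/|\V_N|\big)^2$ fails when $M_N>|C(A)|$. What holds is $|\Imb_N|\ge(1-2Y)_+$ on $\{A\leftrightarrow B\}$, where $Y$ is the fraction of vertices outside $C(A)\cup C(B)$. You must subtract $P$ after squaring, using $(1-2Y)_+^2\le1$, and then apply Jensen:
\[
\E\,\Imb_N^2\ge\E(1-2Y)_+^2-P\ge(1-2\ell_{\rm b})_+^2-P .
\]
Applying Cauchy--Schwarz to $\E[|\Imb_N|\1\{A\leftrightarrow B\}]$ first gives only $(1-2\ell_{\rm b}-P)_+^2$. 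That is weaker than the stated bound when $\ell_{\rm b}$ and $P$ are small.

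\textbf{The gauge argument for symmetric laws is imprecise.} It flips the signs of the couplings of the edges incident to the non-pole vertex $x$, not along a path.
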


Item (ii) is the analogue for every $n\ge4$ of \cite[Thm.~7.1]{CE1984}; the hypotheses of that
result are recalled in Section~\ref{sec:problem}.

For standard Gaussian couplings ($\phi=1/\sqrt{2\pi}$), Table~\ref{tab:thresholds} lists certified
thresholds, and Table~\ref{tab:oneshot} lists certificates for pole order in which
Lemma~\ref{lem:T} applies already at level $0$; they can be checked by hand
(Appendix~\ref{sec:gauss-certs}).

\begin{table}[t]
\centering
\begin{tabular}{rrrr}
\toprule
$n$ & $\beta_{\rm pole}$ & $\beta_{\rm order}$ & $\beta_{\rm imb}$\\
\midrule
4 & 520.0 & 520.0 & 520.0 \\
5 & 132.2 & 132.2 & 132.3 \\
6 & 84.7 & 84.8 & 84.8 \\
7 & 67.0 & 67.0 & 67.0 \\
8 & 57.8 & 57.8 & 57.8 \\
12 & 43.0 & 43.1 & 43.1 \\
16 & 35.7 & 35.8 & 36.0 \\
32 & 12.5 & 13.5 & 14.8 \\
64 & 9.3 & 9.9 & 10.8 \\
\bottomrule
\end{tabular}
\caption{Theorem~\ref{thm:A} for standard Gaussian couplings. Each entry is the $\beta$ on the
grid $0.1\,\Z$ at which a certificate was found and verified in exact rational arithmetic with
rigorous enclosures (Appendix~\ref{sec:gauss-certs}); the search failed at the next grid point
below, which does not prove that no certificate exists there. Columns: $\beta_{\rm pole}$, a
certificate exists, so (i) and the upper bound (v) hold; $\beta_{\rm order}$, in addition
$2\ell_{\rm s}+S<1$, so (ii) and (iii) hold with positive bounds; $\beta_{\rm imb}$, in addition
$2\ell_{\rm b}+\sqrt P<1$, so (iv) and (vi) hold with positive bounds. Each entry certifies every
larger $\beta$.}
\label{tab:thresholds}
\end{table}

\subsection{Theorem B: computer-assisted order near \texorpdfstring{$T_c$}{T\_c}}\label{sec:thmB}

Put $c_0:=\frac12\ln2$, the constant in the bound $a\star b\ge(\min(a,b)-c_0)_+$ of
Lemma~\ref{lem:series}(E1).

\begin{mainthm}[computer-assisted]\label{thm:B}
Let the $J_e$ be independent standard Gaussian variables and let $(n,T^*)$ be one of
\[
(3,\tfrac4{15}),\quad(4,\tfrac67),\quad(5,\tfrac65),\quad(7,\tfrac74),\quad(8,2).
\]
Let $0<T\le T^*$ and $\beta=1/T$. With free poles the following hold.
\begin{enumerate}
\item[\textup{(i)}] \emph{Coupling growth.} There are an integer $J$, the number of bridge steps
(Definition~\ref{def:bridge}; Table~\ref{tab:points} lists it), a growth factor $\lambda>1$ and
constants $L_0,A,f_c>0$ of the certificate (Definition~\ref{def:certB}; this constant $A$ is not
the pole $A$), such that
\[
\Pp(|\xi_{J+i}|\le u)\le\frac{Ac_0+f_cu}{L_0\lambda^i}\qquad(i\ge0,\ u\ge0).
\]
In particular $|\xi_N|\to\infty$ in probability, $\E\langle\sigma_A\sigma_B\rangle^2\to1$ and
$\Pp(A\leftrightarrow B)\to1$.
\item[\textup{(ii)}] \emph{Site-overlap order.} $\liminf_{N\to\infty}\E\langle R_{12}^2\rangle\ge\kappa_Q^2$.
\item[\textup{(iii)}] \emph{A unique giant pole cluster.}
$\liminf_{N\to\infty}\E|C(A)|/|\V_N|\ge\kappa_C$ and $\E M_N/|\V_N|\to0$.
\item[\textup{(iv)}] \emph{Blue imbalance.} $\liminf_{N\to\infty}\E\,\Imb_N^2\ge\kappa_C^2$.
\end{enumerate}
Here $\kappa_Q,\kappa_C>0$ are defined in \eqref{eq:kappas} and satisfy the lower bounds of
Table~\ref{tab:points}. Bounds valid for each finite $N$ follow from
Theorem~\ref{thm:transfer} with the rates of Section~\ref{sec:proofB}.
\end{mainthm}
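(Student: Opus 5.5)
The proof has two stages: coupling growth from a certificate, then the transfer theorem.

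\emph{Stage one: coupling growth.} Fix $(n,T^*)$ from the list and $T\le T^*$. The certificate of Definition~\ref{def:certB} consists of:
\begin{itemize}
\item a bridge chain of $J$ explicit staircase laws $Y_0,\dots,Y_J$;
\item a scale-covariant family $\{L_0\lambda^iX^{(\alpha_i)}\}_{i\ge0}$.
\end{itemize}
The chain is verified at $T^*$ with the following properties. $Y_0$ is more peaked than $\beta^*|J_e|$. Each $Y_{k+1}$ is more peaked than the image of $Y_k$ under the exact one-step map of \eqref{eq:intro-rec}. $Y_J$ is more peaked than $L_0X^{(\alpha_0)}$.

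The first step is to reduce to temperature $T^*$. Since $\beta|J_e|$ is stochastically larger for $\beta\ge\beta^*$, the law of $\beta|J_e|$ is less peaked than that of $\beta^*|J_e|$, so $Y_0$ remains more peaked than it. This is the monotonicity in $\beta$ of Theorem~\ref{thm:bridge}. I also symmetrize the laws. The recursion depends on signs only through products, so I would work with symmetric versions; for Gaussian $J$ the law is already symmetric unimodal.

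The second step is induction over the bridge. By Lemma~\ref{lem:U} and Corollary~\ref{cor:monotone}, the one-step map preserves symmetric unimodality and is monotone in the peakedness order. Induction along the chain then shows that the law of $\xi_k$ is less peaked than $Y_k$ for $k\le J$, and hence that $\xi_J$ is less peaked than $L_0X^{(\alpha_0)}$.

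The third step runs through the family. Theorem~\ref{thm:family} propagates the finite invariance check to every $i$, so $\xi_{J+i}$ is less peaked than $L_0\lambda^iX^{(\alpha_i)}$. The small-ball bound in (i) then follows by bounding $\Pp(|X^{(\alpha_i)}|\le u/(L_0\lambda^i))$. The atom contributes $\alpha_i$, which I expect is controlled by $Ac_0/(L_0\lambda^i)$ through the $c_0$-shift in Lemma~\ref{lem:series}(E1). The density of $X_c$, at most $f_c$, contributes the remaining term.

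This gives $|\xi_N|\to\infty$ in probability. Consequently $\E\tanh^2\xi_N\to1$, and by Lemma~\ref{lem:X}, $\Pp(A\not\leftrightarrow B)\le\E(1-\tanh|\xi_N|)\to0$.

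\emph{Stage two: transfer to the bulk.} For (ii)--(iv) I feed the small-ball profile into Theorem~\ref{thm:transfer}. That theorem needs bounds on $\Pp(|\xi_k|<t)$ and on $\sup_y\Pp(||\xi_k|-y|<t)$ at every level. For $k\ge J$ the second bound is obtained from the density of $X_c$, which is at most $f_c$, scaled by $L_0\lambda^{k-J}$. For $k<J$ I use the explicit staircase densities of the $Y_k$ through the peakedness comparison, together with a trivial bound at $k=0$ coming from the Gaussian density.

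The main obstacle is here. Peakedness controls only the sets $\{|\xi|\le t\}$, not the shifted windows $\{||\xi|-y|<t\}$. For the low levels I would therefore instead bound the density of the law of $|\xi_k|$ directly. One route is to carry a conditional Rogozin bound, as in Section~\ref{sec:partI}, through the last halving step. This is where the astronomically small constants of Table~\ref{tab:points} come from.

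With the rates in hand, the conclusions follow by summing over levels.
\begin{itemize}
\item The weighted sums over levels in $\ell_{\rm s},\ell_{\rm b}$, with weights $N_j/|\V_N|$ from \eqref{eq:counts}, converge to constants bounded by $1-\kappa_Q$ and $1-\kappa_C$ in the limit.
\item The pole terms $S_N,P_N\to0$.
\item For (ii), Cauchy--Schwarz gives $\E\langle R_{12}^2\rangle\ge(\E q_A\text{-aligned mean})^2$, hence the bound $\kappa_Q^2$.
\item For (iii), the density of $C(A)$ follows directly, and $\E M_N/|\V_N|\to0$ follows from item 5 of Theorem~\ref{thm:transfer} with Lemma~\ref{lem:struct}(d), by choosing $m$ and then letting $N\to\infty$.
\item For (iv), the imbalance satisfies $|\Imb_N|\ge|C(A)|/|\V_N|-2M_N/|\V_N|$ on $\{B\in C(A)\}$. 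Jensen then gives the bound $\kappa_C^2$.
\end{itemize}
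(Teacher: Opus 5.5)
There is a genuine gap in Stage two, at the point you call the main obstacle, and the obstacle is not real. Theorem~\ref{thm:transfer} needs a concentration bound $Q_k$ at every level.
\begin{itemize}
\item For $k\ge J$ you take it from the density $f_c$ of $X_c$. But the comparison $|\xi_{J+i}|\gpk L_iX^{(\alpha_i)}$ says nothing about the density of $|\xi_{J+i}|$ away from $0$, so this runs into exactly the obstacle you describe.
\item For $k<J$ you propose a conditional Rogozin bound. The bridge certifies only distribution functions, and the series map can create logarithmic density singularities (Remark~\ref{rem:event}). A Rogozin-type bound would therefore need the event-and-cut machinery of Section~\ref{sec:partI}, fed by density input that the bridge does not supply. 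As stated, this step is not a proof.
\end{itemize}
The missing idea is unimodality. By Lemma~\ref{lem:U}, every $|\xi_k|$ is an atom at $0$ plus a nonincreasing density on $(0,\infty)$. Hence every interval of length $2t$ in $[0,\infty)$ carries at most the mass of $[0,2t]$, and at every level, bridge or family,
\[
\sup_{y\ge0}\Pp\big(\big||\xi_k|-y\big|<t\big)\le\Pp(|\xi_k|\le2t)\le F_k(2t).
\]
The paper takes $Q_k(t):=F_k(2t)$, with $F_k=F_{\nu_k}$ for $k\le J$ and $F_{J+i}(t)=\alpha_i+f_ct/L_i$ for $i\ge1$. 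This choice matters for the statement. The constants $\kappa_Q,\kappa_C$ of \eqref{eq:kappas} are built from the rate bounds $\bar\phi_l$, and these are computed from exactly this profile (Section~\ref{sec:proofB}), including the closed forms $a_1,a_2$ at family levels. A different route to $Q_k$ would give different constants and would not establish the bounds of Table~\ref{tab:points}.

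Your explanation of the astronomically small constants is also off: they do not come from any density estimate. They come from the union bound over the ancestor chain. In \eqref{eq:kappas} a level $j$ contributes only once $2\bar T_j<1$ (respectively $\bar T_j<1$), which happens only after the coupling has grown, near the end of the bridge. Such levels carry weight of order $(2n)^{-j}$ (Remark~\ref{rem:scope}(d)).

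Two smaller slips in Stage one:
\begin{itemize}
\item You state the entry condition (EN) backwards. It says $F_J\le\alpha_0'+(1-\alpha_0')F_c(\cdot/L_0)$, so $L_0X^{(\alpha_0')}$ is more peaked than $\nu_J$, not the reverse. With your direction, the two bounds $F_J\ge\Pp(|\xi_J|\le\cdot)$ and $F_J\ge F_{L_0X}$ cannot be chained, and the induction does not pass from the bridge to the family.
\item $\alpha_i=Ac_0/L_i$ is part of the definition of the family; it is not derived from (E1).
\end{itemize}

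The rest of your Stage two matches the paper's proof through Corollary~\ref{cor:limits}: Cauchy--Schwarz for (ii), and items 4--5 of Theorem~\ref{thm:transfer} with $m\to\infty$ for (iii). Your imbalance bound is valid, though the paper's pointwise $|\Imb_N|\ge(|C(A)|-M_N)/|\V_N|$ needs neither the factor $2$ nor the event $\{B\in C(A)\}$.
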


Theorem~\ref{thm:B} rests on a rigorous computer verification, in exact rational and interval
arithmetic, of finitely many inequalities between explicit laws (Section~\ref{sec:computation} and
Appendix~\ref{app:computation}); every other step is proved in
Sections~\ref{sec:recursion}--\ref{sec:partII} and Appendices~\ref{app:proofs}
and~\ref{app:decomp}.

\begin{table}[t]
\centering
\begin{tabular}{rccrrr}
\toprule
$n$ & $T^*$ & $\beta^*$ & $J$ & $\log_{10}\kappa_Q\ge$ & $\log_{10}\kappa_C\ge$\\
\midrule
3 & $4/15$ & $15/4$ & 251 & $-199.009$ & $-155.225$ \\
4 & $6/7$ & $7/6$ & 43 & $-33.094$ & $-28.135$ \\
5 & $6/5$ & $5/6$ & 26 & $-19.740$ & $-16.255$ \\
7 & $7/4$ & $4/7$ & 19 & $-15.641$ & $-13.427$ \\
8 & $2$ & $1/2$ & 19 & $-16.435$ & $-14.751$ \\
\bottomrule
\end{tabular}
\caption{The computer-assisted certificates of Theorem~\ref{thm:B} (standard Gaussian couplings).
$T^*$ and $\beta^*=1/T^*$ are exact; $J$ is the index of the last bridge law, so the bridge has
$J$ steps (Definition~\ref{def:bridge}); the last two columns are lower bounds for $\log_{10}\kappa_Q$ and
$\log_{10}\kappa_C$, rounded down to three decimals. Each certificate covers every $T\le T^*$.}
\label{tab:points}
\end{table}

The certificates also cover larger numbers of branches (proof in Section~\ref{sec:proofB}).

\begin{corollary}[More branches]\label{cor:Bn}
Let $n'\ge3$ and let $(n,T^*)$ be one of the five pairs of Theorem~\ref{thm:B} with $n\le n'$. For
standard Gaussian couplings on the lattice with $n'$ branches and every $0<T\le T^*$, items
\textup{(i)--(iv)} of Theorem~\ref{thm:B} hold with the same constants $J,\lambda,L_0,A,f_c$ in
\textup{(i)}, and with $\kappa_Q,\kappa_C$ replaced by the positive constants \eqref{eq:kappas}
formed with $n'$ in place of $n$ and the same certified rate sums $\bar T_j$ of
Section~\ref{sec:proofB}. In particular the conclusions hold for
$n'=6$ at every $T\le6/5$ and for every $n'\ge9$ at every $T\le2$.
\end{corollary}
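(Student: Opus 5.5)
The plan is to prove the corollary by comparing the $n'$-branch recursion with the $n$-branch one in the peakedness order, and then running the unchanged transfer theorem with $n'$ in place of $n$.

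\textbf{Step 1: setting.} Work throughout with the folded laws $|\xi_k|$ and with symmetric unimodal laws. For Gaussian couplings, $\beta J$ is symmetric unimodal. By Lemma~\ref{lem:U} and Corollary~\ref{cor:monotone}, the one-step map $\Phi_m$ with $m$ branches preserves symmetric unimodality and is monotone in the peakedness order $\lpk$.

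\textbf{Step 2: an extra branch makes the law less peaked.} The key auxiliary fact is: if $X$ and $Y$ are independent and symmetric unimodal, then $X+Y$ is less peaked than $X$, i.e. $\Pp(|X+Y|\le t)\le\Pp(|X|\le t)$ for all $t\ge0$. To prove it, condition on $Y=y$. Since $X$ is symmetric unimodal, the interval $[-t,t]$ maximizes $\Pp(X\in I)$ among intervals $I$ of length $2t$, so $\Pp(|X+y|\le t)\le\Pp(|X|\le t)$; integrating over $y$ gives the claim. (If the paper already has this as a lemma in Section~\ref{sec:partII}, I will cite it instead.) Each branch $\xi^{(i,1)}\star\xi^{(i,2)}$ is symmetric unimodal by Lemma~\ref{lem:U}, so $\Phi_{n'}(\mu)$ is less peaked than $\Phi_n(\mu)$ for any symmetric unimodal input $\mu$.

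\textbf{Step 3: induction along the levels.} Let $\xi'_k$ denote the $n'$-branch effective coupling. Show by induction on $k$ that $\xi'_k\lpk\xi_k$ in the reverse sense, i.e. $\xi'_k$ is less peaked than $\xi_k$. The base case is $\xi'_0=\xi_0=\beta J$. For the inductive step, monotonicity gives $\Phi_{n'}(\xi'_k)$ less peaked than $\Phi_{n'}(\xi_k)$, and Step~2 gives $\Phi_{n'}(\xi_k)$ less peaked than $\Phi_n(\xi_k)=\xi_{k+1}$. Two consequences follow:
\begin{itemize}
\item The certificate's bridge laws and scale-covariant family, being more peaked than $\xi_k$, are also more peaked than $\xi'_k$. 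Hence the small-ball bound of item (i) holds for $\xi'_{J+i}$ with the same $J,\lambda,L_0,A,f_c$.
\item Alternatively, one can apply Step~2 directly to each bridge step and family inequality, since $\Phi_{n'}$ of a staircase law is less peaked than $\Phi_n$ of it. Either route yields (i) for $n'$.
\end{itemize}
Also by Theorem~\ref{thm:bridge}, the first-law condition becomes easier as $T$ decreases, which covers every $T\le T^*$.

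\textbf{Step 4: transfer and the main obstacle.} For (ii)--(iv), feed the small-ball profile into Theorem~\ref{thm:transfer} on the $n'$ lattice. That theorem needs bounds both on $\Pp(|\xi'_k|<t)$ and on $\sup_y\Pp(||\xi'_k|-y|<t)$. The second is where I expect the main difficulty, because peakedness controls balls centered at $0$, not balls centered at an arbitrary $y$. I would first check whether the rate sums $\bar T_j$ of Section~\ref{sec:proofB} are computed only from the certified profile, i.e. from quantities monotone under $\lpk$. If they are, the same $\bar T_j$ apply, and only the lattice counts \eqref{eq:counts} change with $n'$. Inserting these counts into \eqref{eq:kappas} gives the positive constants $\kappa_Q,\kappa_C$ formed with $n'$.

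Finally, for the specific cases: take $n=5$ to cover $n'=6$ at every $T\le6/5$, and take $n=8$ to cover every $n'\ge9$ at every $T\le2$.
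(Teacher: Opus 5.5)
Your Steps 1--3 follow the paper's argument: induction on $k$ in the peakedness order, dropping the extra $n'-n$ branches with the Anderson-type bound of Lemma~\ref{lem:M}(b), and using monotonicity of the map. You apply $n'$-branch monotonicity first and then drop branches on the $n$-lattice side. The paper drops them on the $n'$ side and then uses $n$-branch monotonicity; either order works. This gives $\Pp(|\xi'_k|\le t)\le\Pp(|\xi_k|\le t)$ for all $k,t$, hence every bound of Theorem~\ref{thm:growth} and item (i) on the $n'$ lattice.

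The gap is in Step 4. You leave it conditional, and the criterion you propose to check would not settle it. Theorem~\ref{thm:transfer} needs $\sup_{y\ge0}\Pp(||\xi'_k|-y|<t)\le Q_k(t)$. As you suspect, comparison in the peakedness order says nothing about windows away from $0$. For example, the magnitude law with an atom of mass $\frac12$ at $\frac12$ and mass $\frac12$ uniform on $[1,2]$ is less peaked than the uniform law on $[0,1]$. Yet its concentration function stays at least $\frac12$ as $t\to0$. So it does not suffice that $\bar T_j$ is built from the certified $F_k$, or from quantities monotone under $\lpk$. The substitution $Q_k(t)=F_k(2t)$ in Section~\ref{sec:proofB} is valid only for laws whose modulus is unimodal.

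The missing line is that $|\xi'_k|$ is itself an s.u.\ magnitude law. This holds because Lemma~\ref{lem:U} holds for any number of branches; you state this in Step~1 but do not use it here. Hence every window of length $2t$ in $[0,\infty)$ carries at most the mass of $[0,2t]$, and
\[
\sup_{y\ge0}\Pp\big(\big||\xi'_k|-y\big|<t\big)\le\Pp(|\xi'_k|\le2t)\le\Pp(|\xi_k|\le2t)\le F_k(2t)=Q_k(t).
\]
So $(F_k,Q_k)$ is a small-ball profile for the $n'$ lattice. Its rates $\ab_l$ are bounded by the same certified $\bar\phi_l$, so $\Tb_{\infty,j}\le\bar T_j$. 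Corollary~\ref{cor:limits} with the $n'$ counting weights of \eqref{eq:counts} then gives (ii)--(iv), with $\kappa_Q,\kappa_C$ formed with $n'$. With this added, your proof coincides with the paper's.
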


\subsection{Remarks on the statements}\label{sec:remarks}

\begin{remark}[Boundary conditions]\label{rem:bc}
The spin statements rest on Lemma~\ref{lem:S}, which holds for free poles, fixed poles, one fixed
pole, and mixtures (Section~\ref{sec:freefixed}). The blue statements use
$\Pp(A\not\leftrightarrow B)\le\E(1-\tanh|\xi_N|)$, which holds for free poles and for fixed poles
in the preferred configuration (pole spins whose product has the sign of $\xi_N$,
Section~\ref{sec:crossing}); for fixed poles of the other sign this bound is not available
(Section~\ref{sec:freefixed}).
\end{remark}

\begin{remark}[Scope]\label{rem:scope}
\begin{enumerate}
\item[(a)] Theorem~\ref{thm:A} is of interest for frustrated laws, in particular laws with mean
zero; for strongly biased laws the order is ferromagnetic.
\item[(b)] Both theorems assume a density; whether $\pm J$ couplings order on these lattices is
open (Sections~\ref{sec:problem} and~\ref{sec:discussion}).
\item[(c)] Only the pole cluster $C(A)$ is giant. That $C(A)$ has density at least $1-\theta$ with
probability close to one is proved only for large $\beta$: in Theorem~\ref{thm:A}(iv) the bound
$1-\ell_{\rm b}/\theta-P$ tends to $1$ as $\beta\to\infty$. In Theorem~\ref{thm:B} only the lower
limit of its expected density as $N\to\infty$ is bounded below. In the Sherrington--Kirkpatrick
model there are two giant clusters \cite[Thm.~1]{MNS2007}.
\item[(d)] The temperatures $T^*(n)$ are limits of the present implementation (grids, schedules,
parameters); we do not know how close to $T_c(n)$ the method can reach. The constants
$\kappa_Q,\kappa_C$ are astronomically small because the transfer uses a union bound over all
levels of the ancestor chain (Lemmas~\ref{lem:S} and~\ref{lem:Bprime}), and this bound is
dominated by the levels near the critical scale, before the coupling has grown: in
\eqref{eq:kappas} a level $j$ contributes only once $2\bar T_j<1$, respectively $\bar T_j<1$, and
the vertices of level at least $j$ form a fraction of order $(2n)^{-j}$ of all vertices by
\eqref{eq:counts}.
\item[(e)] Theorem~\ref{thm:A} is restricted to $n\ge4$ because the one-step constant
$\kappa_3=11/10$ exceeds $1$ (Remark~\ref{rem:sharp}).
\end{enumerate}
\end{remark}

\section{The exact recursion and two-replica facts}\label{sec:recursion}

This section collects the facts used by the transfer theorem: the ancestor chain of a vertex
(Section~\ref{sec:structure}), the exact recursion and the series map (Section~\ref{sec:rec}), the
Markov structure of two replicas given pole spins (Section~\ref{sec:markov}), and the crossing
lemma, which turns a large effective coupling between two vertices into a blue connection between
them and is proved in Section~\ref{sec:crossing}. The proofs of Lemmas~\ref{lem:struct},
\ref{lem:rec}, \ref{lem:series} and~\ref{lem:markov} are routine and are given in
Appendix~\ref{app:proofs}.

\subsection{Structure of the lattice}\label{sec:structure}

\begin{lemma}[Structure]\label{lem:struct}
\begin{enumerate}
\item[\textup{(a)}] Every edge incident to a vertex of $\operatorname{int}D$ is an edge of $D$.
\item[\textup{(b)}] Two sub-diamonds are nested or have disjoint interiors.
\item[\textup{(c)}] The effective couplings of edge-disjoint sub-diamonds, for example the two
halves of a branch, are independent.
\item[\textup{(d)}] For $0\le m\le N$ let $W_m:=\{x\in\V_N\setminus\{A,B\}:k_0(x)\ge N-m+1\}$.
Then $|W_m|=v_m$, every connected set of vertices that avoids $W_m\cup\{A,B\}$ lies in the
interior of one scale-$(N-m)$ sub-diamond, and $v_{N-m}\le(2n)^{-m}|\V_N|$.
\end{enumerate}
\end{lemma}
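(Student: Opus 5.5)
All four parts follow from the recursive construction of $\DL_N$, by induction on the scale. The induction uses one description. A scale-$k$ sub-diamond $D$ with $k\ge1$ is the union of its $n$ branches. Each branch is two scale-$(k-1)$ halves glued at the branch midpoint. So $\operatorname{int}D$ is the disjoint union of the $n$ branch midpoints and the interiors of the $2n$ halves, and this accounts for $v_k=n(1+2v_{k-1})$.

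\emph{(a)} We induct on the scale $k$ of $D$. For $k=0$ the interior is empty. Let $x\in\operatorname{int}D$. If $x$ lies in the interior of a half $H$, the induction hypothesis shows that every edge at $x$ inside $\DL_N$ is an edge of $H\subset D$. There is one subtlety: the hypothesis concerns $H$ as a sub-diamond of the whole lattice, so the induction should really run top-down on how $D$ sits in $\DL_N$. The cleanest route is to prove directly that every edge of $\DL_N$ incident to $x$ lies in the sub-diamond $D(x)$, and in fact in one of the two halves meeting at $x$. Every vertex is created exactly once, as a midpoint, and the later refinements only replace edges by sub-diamonds that have the same endpoints. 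Next, if $x$ is a branch midpoint of $D$, its incident edges are exactly those of the two halves of its branch, for the same reason. Both cases place the edges at $x$ inside $D$.

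\emph{(b)} Sub-diamonds form a tree under the relation ``is a half of a branch of''. Two sub-diamonds that are not nested have a last common ancestor $D'$ and lie in distinct halves of $D'$. Distinct halves of $D'$ meet only in poles or midpoints of $D'$, and none of these is interior to either half. So the two interiors are disjoint.

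\emph{(c)} The effective coupling $\xi_D$ is a function of the couplings $(J_e)_{e\in E(D)}$ alone. Edge-disjoint sub-diamonds therefore depend on disjoint sets of i.i.d. variables, which gives independence.

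\emph{(d)} The count is immediate. Vertices of level $\ge N-m+1$ are exactly the interior vertices of the top $m$ levels of the construction. Identifying the $\DL_m$-skeleton obtained by stopping the construction at scale $N-m$, this count is $v_m$. In that skeleton each edge is a scale-$(N-m)$ sub-diamond. Removing $W_m\cup\{A,B\}$ removes every skeleton vertex. By (a), an edge of $\DL_N$ between two vertices outside the skeleton vertex set joins vertices in the interior of a single scale-$(N-m)$ sub-diamond. Indeed, the edge lies in some such sub-diamond, since these partition the edges, and both of its endpoints are non-skeleton vertices, hence interior to it. Moving along the edges of a connected set, we never leave that interior, because exiting would require passing through a pole, and every pole is a skeleton vertex. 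Finally,
\[
v_{N-m}=\frac{n((2n)^{N-m}-1)}{2n-1}\le(2n)^{-m}\,\frac{n((2n)^N-1)}{2n-1}+\dots\le(2n)^{-m}|\V_N|,
\]
which follows since $v_{N-m}(2n)^m\le v_N<|\V_N|$.

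The main care point is (a)/(d). One must phrase the induction top-down, so that the claim ``the edges at $x$ lie in the halves meeting at $x$'' is stable under refinement. Everything else is bookkeeping.
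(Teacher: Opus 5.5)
Your proposal is correct and follows essentially the paper's own argument: (a) from the fact that each vertex is created once as a midpoint and later refinements keep endpoints; (b) from the tree of halves, which the paper phrases as induction on $N$; (c) from the disjoint coupling sets; and (d) from the $\DL_m$-skeleton count plus the step-along-edges argument. Your closing bound $v_{N-m}(2n)^m\le v_N<|\V_N|$ is a slightly more direct form of the paper's identity $v_k+c=2n(v_{k-1}+c)$ with $c=n/(2n-1)$; drop the stray ``$+\dots$'' from your display.
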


\emph{Ancestors.} Let $x$ have level $j$. For $j\le k\le N$ let $D_k=D_k(x)$ be the unique
scale-$k$ sub-diamond with $x\in\operatorname{int}D_k$ (unique by Lemma~\ref{lem:struct}(b));
$D_N=\DL_N$ and $D_j=D(x)$. For $k>j$, $x$ is not a midpoint of a branch of $D_k$, so it lies in
the interior of exactly one half of one branch of $D_k$, and that half is $D_{k-1}$. Let $y_k$ be
the midpoint of this branch ($k>j$), and $y_j:=x$, the midpoint of a branch of $D_j$; let
$H^{(1)}_k$ (poles $a_{D_k},y_k$) and $H^{(2)}_k$ (poles $y_k,b_{D_k}$) be the halves of the
branch with midpoint $y_k$, and let $\xi^{(1)}_k,\xi^{(2)}_k$ be their effective couplings, which
are independent copies of $\xi_{k-1}$ (Lemma~\ref{lem:struct}(c)). Put
\begin{equation}\label{eq:gap}
\Delta_k:=\big||\xi^{(1)}_k|-|\xi^{(2)}_k|\big|,\qquad \Xi_k:=\max(|\xi^{(1)}_k|,|\xi^{(2)}_k|),
\end{equation}
let $H^*_k$ be the stronger half (larger $|\xi|$; the first one in case of a tie), and let
$\pi_k$ be the pole of $D_k$ that $H^*_k$ contains. Since $D_{k-1}$ is a half of the branch with
midpoint $y_k$,
\begin{equation}\label{eq:nesting}
\{a_{D_{k-1}},b_{D_{k-1}}\}\subset\{a_{D_k},b_{D_k},y_k\}\qquad(j<k\le N).
\end{equation}

\subsection{The exact recursion and the series map}\label{sec:rec}

\begin{lemma}[Exact recursion]\label{lem:rec}
\begin{enumerate}
\item[\textup{(a)}] $Z_D(s,s')=Z_D(-s,-s')$, so that $Z_D(s,s')=C_De^{\xi_Dss'}$ for a constant $C_D>0$.
\item[\textup{(b)}] If $D$ consists of $D_1,D_2$ in series, $\tanh\xi_D=\tanh\xi_{D_1}\tanh\xi_{D_2}$,
i.e.\ $\xi_D=\xi_{D_1}\star\xi_{D_2}$; if $D$ consists of $D_1,\dots,D_n$ in parallel,
$\xi_D=\sum_i\xi_{D_i}$. Hence \eqref{eq:intro-rec} holds with $\xi_0=\beta J_e$.
\item[\textup{(c)}] With free poles, $\langle\sigma_A\sigma_B\rangle=\tanh\xi_N$ and
$\Pp(q_A\ne q_B\mid J)=\frac12\sech^2\xi_N$.
\item[\textup{(d)}] If $y$ is the midpoint of a branch of $D$ with halves $H^{(1)}$ (poles $a,y$)
and $H^{(2)}$ (poles $y,b$), then under the fixed-pole Gibbs measure of $D$ with pole spins
$\sigma_a,\sigma_b$ one has
$\Pp(\sigma_y=s)\propto\exp\big(s(\xi_{H^{(1)}}\sigma_a+\xi_{H^{(2)}}\sigma_b)\big)$.
\end{enumerate}
\end{lemma}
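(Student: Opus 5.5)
We prove the four parts of Lemma~\ref{lem:rec} in order, each by a direct computation of Boltzmann sums; the one point needing care is the induction for the series rule.

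\emph{Part (a).} Flipping every spin of $D$ leaves each term $J_e\sigma_x\sigma_y$ unchanged. So the Boltzmann sum over interior spins with pole spins $(s,s')$ equals the sum with $(-s,-s')$, which gives $Z_D(s,s')=Z_D(-s,-s')$. The two values $Z_D(+,+)=Z_D(-,-)$ and $Z_D(+,-)=Z_D(-,+)$ are positive. Put
\[
C_D:=\sqrt{Z_D(+,+)Z_D(+,-)},\qquad \xi_D=\tfrac12\log\frac{Z_D(+,+)}{Z_D(+,-)} .
\]
Then $Z_D(s,s')=C_De^{\xi_Dss'}$.

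\emph{Part (b).} By Lemma~\ref{lem:struct}(a), the interiors of the constituents contribute edges only within their own sub-diamond.

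For \emph{parallel composition}, the constituents share only the poles. So $Z_D(s,s')=\prod_iZ_{D_i}(s,s')=\prod_iC_{D_i}e^{\xi_{D_i}ss'}$, which gives $\xi_D=\sum_i\xi_{D_i}$.

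For \emph{series composition} with junction $y$, summing over $\sigma_y$ gives
\[
Z_D(s,s')=C_{D_1}C_{D_2}\sum_{t=\pm1}e^{\xi_1st+\xi_2ts'}=2C_{D_1}C_{D_2}\cosh(\xi_1s+\xi_2s').
\]
Hence
\[
e^{2\xi_D}=\frac{\cosh(\xi_1+\xi_2)}{\cosh(\xi_1-\xi_2)} .
\]
The identity $\tanh\tfrac12\log\frac{\cosh(a+b)}{\cosh(a-b)}=\tanh a\tanh b$ follows from the addition formulas:
\[
\frac{\cosh(a+b)-\cosh(a-b)}{\cosh(a+b)+\cosh(a-b)}=\frac{\sinh a\sinh b}{\cosh a\cosh b}.
\]

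By the construction of $\DL_k$ (branches of two halves in series, branches in parallel), $\xi_{k+1}$ is the sum over branches of $\xi^{(i,1)}\star\xi^{(i,2)}$. The $2n$ halves are edge-disjoint copies of $\DL_k$, so their effective couplings are i.i.d.\ with the law of $\xi_k$ (Lemma~\ref{lem:struct}(c)). The base case is $\xi_0=\beta J_e$, since $Z_{\DL_0}(s,s')=e^{\beta J ss'}$. This gives \eqref{eq:intro-rec} by induction on $k$.

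\emph{Part (c).} With free poles the marginal of $(\sigma_A,\sigma_B)$ is proportional to $e^{\xi_N\sigma_A\sigma_B}$. Therefore
\[
\langle\sigma_A\sigma_B\rangle=\frac{e^{\xi_N}-e^{-\xi_N}}{e^{\xi_N}+e^{-\xi_N}}=\tanh\xi_N .
\]
For two independent replicas, $q_Aq_B=(\sigma_A\sigma_B)(\tau_A\tau_B)$. Given $J$, its expectation is $\langle\sigma_A\sigma_B\rangle^2=\tanh^2\xi_N$. Since $q_A\ne q_B$ exactly when $q_Aq_B=-1$,
\[
\Pp(q_A\ne q_B\mid J)=\tfrac12\bigl(1-\tanh^2\xi_N\bigr)=\tfrac12\sech^2\xi_N .
\]

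\emph{Part (d).} Fix the poles of $D$. Every spin other than $y$ and the interiors of $H^{(1)},H^{(2)}$ lies in the other branches or in their interiors. By Lemma~\ref{lem:struct}(a) these spins interact with $y$ only through the poles. Hence the weight of $\sigma_y=s$ factorizes as
\[
Z_{H^{(1)}}(\sigma_a,s)\,Z_{H^{(2)}}(s,\sigma_b)\times(\text{terms independent of }s).
\]
By (a) this is proportional to $\exp\bigl(s(\xi_{H^{(1)}}\sigma_a+\xi_{H^{(2)}}\sigma_b)\bigr)$.

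The step needing most care is the series identity in (b). The hyperbolic algebra itself is routine. What must be made explicit is the induction, using Lemma~\ref{lem:struct}(a), that glued sub-diamonds interact only through their shared poles.
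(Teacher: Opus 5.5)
Your proof is correct and follows the paper's own argument: the global spin flip for (a), summing out the junction spin and multiplying parallel weights for (b), the two-point marginal $\propto e^{\xi_N\sigma_A\sigma_B}$ for (c) (your $\tfrac12(1-\tanh^2\xi_N)$ is the paper's $2p(1-p)$), and summing out the halves and the other branches for (d). You simply write out the hyperbolic identity and the factorization in more detail; the one cosmetic point is that \eqref{eq:intro-rec} needs no induction on $k$, since the decomposition of $\DL_{k+1}$ into $2n$ edge-disjoint copies of $\DL_k$ gives it at each level directly.
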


Lemma~\ref{lem:rec}(d) is the exact single-site recursion
$\langle\sigma_y\rangle=\tanh(\xi^{(1)}\sigma_a+\xi^{(2)}\sigma_b)$, conditionally on the spins at
$a,b$; it underlies the recursion for local magnetizations of \cite{MCC1990} and
\cite[eq.~(11)]{NCNCA1997}, and the chains of \cite[\S VII]{CE1984}.

\begin{lemma}[The series map]\label{lem:series}
For $a,b\ge0$ write $a\star b=\artanh(\tanh a\tanh b)$, so that $e^{2(a\star b)}=\cosh(a+b)/\cosh(a-b)$.
\begin{enumerate}
\item[\textup{(E0)}] $\star$ is symmetric, nondecreasing in each argument, and $a\star b\le\min(a,b)$.
\item[\textup{(E1)}] For $0\le a\le b$,
$a\star b=a+\frac12\log(1+e^{-2(a+b)})-\frac12\log(1+e^{-2(b-a)})$; hence
$a\star b\ge(\min(a,b)-c_0)_+$ with $c_0=\frac12\ln2$.
\item[\textup{(E2)}] For $u\ge0$, $\{a\star b\le u\}=\{b\le u\}\cup\{b>u,\ a\le\iota(u,b)\}$, where
for $b>u$
\[
\iota(u,b):=\artanh\frac{\tanh u}{\tanh b}=u+\delta(b-u)-\delta(b+u),\qquad
\delta(d):=-\tfrac12\log(1-e^{-2d}),
\]
and $\iota$ is decreasing in $b$. Hence, for independent $a,b\ge0$ with $a$ having CDF $F_a$,
$\Pp(a\star b\le u)=\Pp(b\le u)+\E[\1\{b>u\}F_a(\iota(u,b))]$.
\end{enumerate}
\end{lemma}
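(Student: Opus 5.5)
The three parts are elementary facts about $\artanh(\tanh a\tanh b)$ for $a,b\ge0$; I will verify them in the order (closed form, E0, E1, E2).

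\emph{Closed form.} Put $t_a=\tanh a$, $t_b=\tanh b$. Then
\[
e^{2(a\star b)}=\frac{1+t_at_b}{1-t_at_b}.
\]
Multiplying numerator and denominator by $\cosh a\cosh b$ turns them into $\cosh a\cosh b\pm\sinh a\sinh b$, that is, into $\cosh(a+b)$ and $\cosh(a-b)$. This gives $e^{2(a\star b)}=\cosh(a+b)/\cosh(a-b)$.

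\emph{(E0).} Symmetry is immediate from the definition. Monotonicity in each argument holds because $\tanh$ and $\artanh$ are increasing and $\tanh$ is nonnegative on $[0,\infty)$. For the bound by the minimum, note that $\tanh b\le1$, so $\tanh a\tanh b\le\tanh a$ and hence $a\star b\le a$. By symmetry also $a\star b\le b$.

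\emph{(E1).} Take $0\le a\le b$ and write
\[
\cosh(a+b)=\tfrac12e^{a+b}\bigl(1+e^{-2(a+b)}\bigr),\qquad
\cosh(a-b)=\tfrac12e^{b-a}\bigl(1+e^{-2(b-a)}\bigr).
\]
Taking half the logarithm of their ratio, the exponential prefactors contribute $\tfrac12\bigl((a+b)-(b-a)\bigr)=a$, which gives the stated formula. For the lower bound, the middle term is nonnegative and the last term is at least $-\tfrac12\log2=-c_0$, because $e^{-2(b-a)}\le1$. So $a\star b\ge a-c_0$. Combined with $a\star b\ge0$ this gives $a\star b\ge(\min(a,b)-c_0)_+$.

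\emph{(E2).} Fix $u\ge0$.

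If $b\le u$, then $a\star b\le b\le u$ by (E0). So the event $\{b\le u\}$ lies inside $\{a\star b\le u\}$.

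If $b>u$, then $\tanh u/\tanh b<1$, so $\iota(u,b)$ is well defined. Moreover $a\star b\le u$ holds exactly when $\tanh a\le\tanh u/\tanh b$, that is, when $a\le\iota(u,b)$. This proves the decomposition of the event.

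The function $\iota(u,\cdot)$ is decreasing because $\tanh b$ is increasing in $b$ and $\artanh$ is increasing.

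For the explicit formula for $\iota$, I use the identity $\artanh x=\tfrac12\log\frac{1+x}{1-x}$ with $x=\tanh u/\tanh b$:
\[
\frac{1+x}{1-x}=\frac{\tanh b+\tanh u}{\tanh b-\tanh u}=\frac{\sinh(b+u)}{\sinh(b-u)}.
\]
Next write $\sinh(b\pm u)=\tfrac12e^{b\pm u}\bigl(1-e^{-2(b\pm u)}\bigr)$. Half the logarithm of the ratio is then
\[
u-\tfrac12\log\bigl(1-e^{-2(b-u)}\bigr)+\tfrac12\log\bigl(1-e^{-2(b+u)}\bigr),
\]
which is $u+\delta(b-u)-\delta(b+u)$.

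\emph{Probability formula.} The two events in the decomposition are disjoint. Since $a$ and $b$ are independent, I condition on $b$ and apply Fubini. The first event contributes $\Pp(b\le u)$, and the second contributes $\E[\1\{b>u\}F_a(\iota(u,b))]$.

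\emph{Main obstacle.} There is none of substance; the only step needing care is the algebra in the $\sinh$ ratio, in particular keeping track of the signs of the two $\delta$ terms.
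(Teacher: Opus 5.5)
Your proposal is correct and follows the paper's proof essentially step for step. Like the paper, you get (E0) from the monotonicity of $\tanh$ and $\tanh b\le 1$, (E1) by factoring $\cosh(a+b)$ and $\cosh(b-a)$, and (E2) from the equivalence $a\star b\le u\iff\tanh a\le\tanh u/\tanh b$ together with the ratio $\sinh(b+u)/\sinh(b-u)$; you also spell out the closed form, the $\sinh$ expansion giving $u+\delta(b-u)-\delta(b+u)$, and the conditioning step for the probability formula, which the paper leaves to the reader.
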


Collet and Eckmann use the weaker bound $a\star b\ge\min(a,b)-\ln2$ \cite[Lemma~A1.2]{CE1984}.

\subsection{Markov structure}\label{sec:markov}

\begin{definition}\label{def:admissible}
An \emph{admissible boundary law} is a law of the four pole spins $(\sigma_A,\sigma_B,\tau_A,\tau_B)$,
possibly depending on $J$, such that given these spins (and $J$) the two replicas are independent
and each has the fixed-pole Gibbs law; blue bonds are then added given $(J,\sigma,\tau)$ as in
Section~\ref{sec:measures}. Free poles (two independent free replicas), fixed poles
($\sigma_A=\tau_A=s_A$, $\sigma_B=\tau_B=s_B$), one fixed pole, and mixtures are admissible.
\end{definition}

\begin{lemma}[Markov structure]\label{lem:markov}
Under an admissible boundary law, let $D$ be a sub-diamond. Conditionally on $J$ and on the spins
at $a_D,b_D$ in both replicas:
\begin{enumerate}
\item[\textup{(a)}] the restrictions of $\sigma$ and $\tau$ to $\operatorname{int}D$ are independent,
each with the fixed-pole Gibbs law of $D$, and the blue bonds of the edges of $D$ are
conditionally independent given the spins, with the probabilities of
Section~\ref{sec:measures};
\item[\textup{(b)}] if $y$ is the midpoint of a branch of $D$ with halves $H^{(1)},H^{(2)}$, then
\[
\Pp(\sigma_y=s\mid\cdot)\propto\exp\big(s(\xi_{H^{(1)}}\sigma_{a_D}+\xi_{H^{(2)}}\sigma_{b_D})\big),
\]
the same holds for $\tau$ independently, and conditionally also on $\sigma_y,\tau_y$ the spins and
blue bonds inside $H^{(1)}$ and $H^{(2)}$ are independent with the fixed-pole two-replica laws of
these graphs.
\end{enumerate}
\end{lemma}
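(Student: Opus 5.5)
The plan is to obtain both parts of Lemma~\ref{lem:markov} from the product structure of the two-replica measure with blue bonds, together with Lemma~\ref{lem:struct}(a) and Lemma~\ref{lem:rec}(d).

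\textbf{Factorization across $\partial D$.} Under an admissible boundary law the replicas are independent given $J$ and the four pole spins of $\DL_N$, and each has the fixed-pole Gibbs law. The blue bonds are added edge by edge, independently given $(J,\sigma,\tau)$. So the joint weight of $(\sigma,\tau,\text{bonds})$ given the pole spins is a product over edges of local factors: $e^{\beta J_e\sigma_x\sigma_y}e^{\beta J_e\tau_x\tau_y}$ times the bond probability on $e$.

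By Lemma~\ref{lem:struct}(a), every edge incident to $\operatorname{int}D$ is an edge of $D$. Hence the factors split into two groups:
\begin{itemize}
\item those of $E(D)$, which involve only the spins on $V(D)$ and the bonds of $D$;
\item those of the remaining edges, which involve only spins outside $\operatorname{int}D$ and bonds outside $D$.
\end{itemize}
Conditioning on $J$ and on $\sigma,\tau$ at $a_D,b_D$ therefore makes the interior configuration of $D$ independent of the rest. Its law is proportional to the product of the $D$-factors, which is the product of the two fixed-pole Gibbs laws of $D$ with bonds added as in Section~\ref{sec:measures}. The same holds if we first condition on the full pole spins of $\DL_N$: the outer weight only contributes a normalisation, and averaging over it preserves the conditional law. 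This gives (a), including the conditional independence of the blue bonds.

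\textbf{The midpoint law.} For (b), apply (a) inside $D$. Each replica has the fixed-pole Gibbs law of $D$, so Lemma~\ref{lem:rec}(d) gives
\[
\Pp(\sigma_y=s\mid\cdot)\propto\exp\big(s(\xi_{H^{(1)}}\sigma_{a_D}+\xi_{H^{(2)}}\sigma_{b_D})\big),
\]
and the same holds for $\tau$, independently of $\sigma$.

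\textbf{Splitting at the midpoint.} Now condition further on $\sigma_y,\tau_y$. The graph $D$ consists of $H^{(1)}$, $H^{(2)}$ and the other branches, and these pieces meet only at $a_D,b_D,y$. The interiors of $H^{(1)}$ and $H^{(2)}$ are disjoint, and by Lemma~\ref{lem:struct}(a) their edge sets are disjoint. The $D$-weight is again a product over these pieces. Once all spins at $a_D,b_D,y$ are fixed, the variables of each piece are independent, and each piece carries its own fixed-pole two-replica law with bonds.

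\textbf{Main obstacle.} The only real point is to keep the bonds inside the product factorization. This works because each bond probability depends only on $J_e$ and on the spins at the endpoints of $e$, and each edge lies in exactly one piece. No other obstacle is expected; the argument is routine bookkeeping.
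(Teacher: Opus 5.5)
Your proposal is correct and is essentially the paper's proof. The paper likewise uses Lemma~\ref{lem:struct}(a) to see that $\{a_D,b_D\}$, and then $\{a_D,y,b_D\}$, separates the relevant interiors in each replica's fixed-pole Gibbs measure; your edge-product factorization is the same Markov property written out. It then notes that the resulting conditional law does not depend on the four pole spins of $\DL_N$, so it is also the law given the spins at $a_D,b_D$ alone (the precise content of your ``averaging'' remark), adds the bonds independently given the spins, and reads off the midpoint law from Lemma~\ref{lem:rec}(d). One small slip: the edge-disjointness of $H^{(1)}$ and $H^{(2)}$ comes from the construction of $\DL_N$, not from Lemma~\ref{lem:struct}(a), which is vacuous when the halves are single edges.
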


\subsection{The crossing lemma}\label{sec:crossing}

Let $G$ be a finite graph with two vertices $u\ne v$, real couplings and $\beta\ge0$. Let $Z(s)$ be
the one-replica Boltzmann weight of $\{\sigma_u=+1,\sigma_v=s\}$ and
$\xi_G:=\frac12\log(Z(+)/Z(-))$ (for a sub-diamond, $\xi_G=\xi_D$). A pole configuration is
\emph{preferred} if $\sigma_u\sigma_v=\tau_u\tau_v=\sgn\xi_G$ (with $\sgn0:=+1$).

\begin{lemma}[Crossing]\label{lem:X}
Let $\mathcal N_s$ be the two-replica CMR weight of
$\{\sigma_u=\tau_u=+1,\ \sigma_v=\tau_v=s,\ u\not\leftrightarrow v\}$.
\begin{enumerate}
\item[\textup{(a)}] $\mathcal N_+=\mathcal N_-\le\min(Z(+),Z(-))^2$.
\item[\textup{(b)}] Under the fixed-pole two-replica law with a preferred pole configuration,
$\Pp(u\not\leftrightarrow v)\le e^{-4|\xi_G|}$.
\item[\textup{(c)}] With free poles, $\Pp(u\not\leftrightarrow v)\le2/(1+e^{2|\xi_G|})=1-\tanh|\xi_G|$.
\end{enumerate}
\end{lemma}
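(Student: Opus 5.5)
The plan is to prove all three parts from one weight-preserving involution on CMR configurations. Recall the CMR weight of a configuration $(\sigma,\tau,\omega)$, where $\omega$ is the bond configuration: it is $\prod_e$ of an edge factor equal to $e^{\beta J_e(\sigma_x\sigma_y+\tau_x\tau_y)}$ times $(1-e^{-4\beta|J_e|})$ if $e$ is blue, and times $e^{-4\beta|J_e|}$ if $e$ is satisfied in both replicas but not blue. Summing over $\omega$ returns the product of the two Boltzmann weights. For a non-blue edge the factor is $e^{-2\beta|J_e|}$ when $e$ is satisfied in both replicas or unsatisfied in both. It is $1$ when $e$ is satisfied in exactly one replica.

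\emph{Step 1 (involution, giving $\mathcal N_+=\mathcal N_-$).} Take a configuration with $\sigma_u=\tau_u=+1$, $\sigma_v=\tau_v=s$ and $u\not\leftrightarrow v$. Let $S$ be the blue cluster of $v$, so $u\notin S$. Flip both $\sigma$ and $\tau$ on $S$ and keep $\omega$ unchanged.
\begin{itemize}
\item Edges inside or outside $S$ keep their spin products, so their factors are unchanged.
\item Edges in the boundary of $S$ are non-blue. For each replica separately the spin product across such an edge changes sign. So ``satisfied in both'' and ``unsatisfied in both'' are exchanged, while ``mixed'' stays mixed.
\item Hence the non-blue factor is preserved. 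The new configuration is still admissible, because a both-unsatisfied edge is allowed to be non-blue and no blue edge changes status.
\end{itemize}
The blue clusters are unchanged, so $S$ is still the cluster of $v$ and the map is an involution. It exchanges $s$ with $-s$ at $v$ and preserves $u\not\leftrightarrow v$. Therefore $\mathcal N_+=\mathcal N_-$. Dropping the event $u\not\leftrightarrow v$ and summing over bonds gives $\mathcal N_s\le Z(s)^2$. Combining the two facts yields (a).

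\emph{Step 2 (the bounds).} By the symmetry $\sigma\mapsto-\sigma$ we may assume $\xi_G\ge0$, so that $Z(+)=e^{2\xi_G}Z(-)$.

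For (b), the preferred configuration has $s=+$ after a global flip. Then
\[
\Pp(u\not\leftrightarrow v)=\mathcal N_+/Z(+)^2\le Z(-)^2/Z(+)^2=e^{-4\xi_G}.
\]

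For (c), use global flip symmetry in each replica to fix $\sigma_u=\tau_u=+1$; the total weight is then $(Z(+)+Z(-))^2$.
\begin{itemize}
\item If $\sigma_v\ne\tau_v$, then $q_u\ne q_v$. Since $q$ is constant on blue clusters, $u$ and $v$ are disconnected. These configurations contribute weight $2Z(+)Z(-)$.
\item If $\sigma_v=\tau_v$, the disconnected weight is $\mathcal N_++\mathcal N_-\le2Z(-)^2$ by (a).
\end{itemize}
Hence
\[
\Pp(u\not\leftrightarrow v)\le\frac{2Z(+)Z(-)+2Z(-)^2}{(Z(+)+Z(-))^2}=\frac{2}{1+e^{2\xi_G}}=1-\tanh\xi_G .
\]

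The main point to check carefully is Step 1: that the joint flip on the cluster of $v$ preserves the edge factors and the cluster structure. This is exactly where the rule that only both-satisfied edges can be blue is used. After that step, the bounds are short computations.
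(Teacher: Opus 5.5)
Your proof is correct and is essentially the paper's argument: the same flip of both replicas on the blue cluster of $v$, keeping the bond set, gives $\mathcal N_+=\mathcal N_-\le\min(Z(+),Z(-))^2$. Parts (b) and (c) then follow by the same ratio computations.

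One cosmetic point: the global flip $\sigma\mapsto-\sigma$ changes neither the sign of $\xi_G$ nor the product $\sigma_u\sigma_v$, so it cannot justify ``we may assume $\xi_G\ge0$''. Instead, work directly with $s^*=\sgn\xi_G$ and $\min(Z(+),Z(-))$, as the paper does, or justify the reduction by a gauge flip at $v$ that reverses $\sigma_v,\tau_v$ and the couplings of the edges at $v$.
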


\begin{proof}
The global flips $\sigma\mapsto-\sigma$ and $\tau\mapsto-\tau$ preserve the two-replica measure,
the law of the blue bonds (which depends on the products $\sigma_x\sigma_y$, $\tau_x\tau_y$),
preferredness and blue connectivity; so it suffices to treat $\sigma_u=\tau_u=+1$.
Relative to $e^{2\beta|J_e|}$, an edge has weight $1-r_e^2$ if it is satisfied in both replicas
and blue, $r_e^2$ if satisfied in both and not blue, $r_e$ if satisfied in exactly one, and $r_e^2$
if satisfied in neither, where $r_e=e^{-2\beta|J_e|}$. On $\{u\not\leftrightarrow v\}$ flip both
replicas on the blue cluster of $v$, keeping the blue edge set. Edges inside the cluster keep their
state; boundary edges are not blue, and the flip exchanges ``satisfied in both'' with ``satisfied in
neither'' (both $r_e^2$) and the two ``satisfied in one'' states (both $r_e$). The map is a
weight-preserving involution between the configurations counted by $\mathcal N_+$ and by
$\mathcal N_-$; this is the blue-cluster flip (``blue moves'') of
\cite[arXiv version, p.~6]{CMR1998}. Since the total two-replica
weight of $\{\sigma_v=\tau_v=s\}$ is $Z(s)^2$, $\mathcal N_s\le Z(s)^2$, and (a) follows.
(b) With $s^*=\sgn\xi_G$ the probability is $\mathcal N_{s^*}/Z(s^*)^2\le(Z(-s^*)/Z(s^*))^2=e^{-4|\xi_G|}$.
(c) If $\sigma_v\ne\tau_v$ then $q_u\ne q_v$ and $u\not\leftrightarrow v$, so
$\Pp(u\not\leftrightarrow v)=(2Z(+)Z(-)+2\mathcal N)/(Z(+)+Z(-))^2\le2p_+p_-+2\min(p_\pm)^2=2\min(p_\pm)$,
where $p_\pm=Z(\pm)/(Z(+)+Z(-))$ and $\mathcal N=\mathcal N_\pm$.
\end{proof}

\section{From effective couplings to the bulk}\label{sec:transfer}

This section turns bounds on the laws of the $\xi_k$ into bounds for the bulk.
Lemmas~\ref{lem:S} and~\ref{lem:Bprime} bound, conditionally on $J$, the probability that the
overlap at a vertex $x$ differs from a common overlap at the poles, respectively that $x$ lies in
neither blue pole cluster, by a sum over the ancestor levels $k$ of terms that are small unless the
two halves of the branch leading to $x$ have nearly equal strengths (or, for blue connection, are both
weak); small-ball bounds average these terms (Theorem~\ref{thm:transfer}).

Throughout this section $x$ is a non-pole vertex of level $j$, with the ancestors of
Section~\ref{sec:structure}, and the boundary law is admissible unless stated otherwise. Put
\[
p(s):=\frac1{1+e^{2s}},\qquad \psi(s):=1-(1-p(s))^2,
\]
so that, for a spin in a field of strength $s\ge0$, $p(s)$ is the probability that it takes the
unfavoured sign, $\psi(s)$ the probability that at least one of two independent replicas does, and
$2p(s)(1-p(s))=\frac12\sech^2s$ the probability that the two replicas disagree. Moreover
$\frac12\sech^2s=\psi(s)-p(s)^2\le\psi(s)$.

\subsection{The spin chain}\label{sec:spinchain}

\begin{lemma}[Spin chain]\label{lem:S}
For every $J$,
\[
\Pp(q_x\ne q_A,\ q_A=q_B\mid J)\le\frac12\sum_{k=j}^{N}\sech^2\Delta_k .
\]
\end{lemma}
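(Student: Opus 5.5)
The plan is to prove the bound by a union bound along the ancestor chain $D_N\supset D_{N-1}\supset\dots\supset D_j$ of Section~\ref{sec:structure}. At each level the only input is the exact midpoint law of Lemma~\ref{lem:markov}(b).

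\emph{Step 1: one level.} Fix $k\in\{j,\dots,N\}$ and condition on $J$ and on the four spins at the poles $a=a_{D_k}$, $b=b_{D_k}$ in both replicas. Suppose these pole overlaps share a common value, $q_a=q_b=q$. By Lemma~\ref{lem:markov}(b), $\sigma_{y_k}$ and $\tau_{y_k}$ are conditionally independent, with fields $h=\xi^{(1)}_k\sigma_a+\xi^{(2)}_k\sigma_b$ and $\xi^{(1)}_k\tau_a+\xi^{(2)}_k\tau_b$. Since $\tau_a=q\sigma_a$ and $\tau_b=q\sigma_b$, the second field equals $qh$. Hence $\tau':=q\tau_{y_k}$ is independent of $\sigma_{y_k}$ and also has field $h$.

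Now $q_{y_k}\ne q$ exactly when $\sigma_{y_k}\ne\tau'$. Two independent spins in the same field $h$ disagree with probability $2p(|h|)(1-p(|h|))=\frac12\sech^2 h$. Since $|h|=|\xi^{(1)}_k\pm\xi^{(2)}_k|\ge\Delta_k$ and $\sech^2$ is decreasing on $[0,\infty)$, this probability is at most $\frac12\sech^2\Delta_k$. This bound is $J$-measurable.

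\emph{Step 2: the chain.} For $j\le k\le N$ let $F_k$ be the event that the two poles of $D_k$ have a common overlap and $q_{y_k}$ differs from it. I claim
\[
\{q_x\ne q_A,\ q_A=q_B\}\subset\bigcup_{k=j}^N F_k .
\]
Suppose $q_A=q_B=q$ and no $F_k$ occurs. Descend inductively from $D_N=\DL_N$. If the poles of $D_k$ all carry overlap $q$ and $F_k$ fails, then $q_{y_k}=q$. By \eqref{eq:nesting} the poles of $D_{k-1}$ lie in $\{a_{D_k},b_{D_k},y_k\}$, so they also carry overlap $q$. At the bottom, $y_j=x$ and $F_j$ fails, so $q_x=q=q_A$. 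This proves the inclusion.

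\emph{Step 3: conclusion.} The event ``the poles of $D_k$ have a common overlap'' is measurable with respect to the conditioning of Step~1. Taking conditional expectation given $J$ therefore yields $\Pp(F_k\mid J)\le\frac12\sech^2\Delta_k$, and summing over $k$ gives the lemma.

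The only delicate point is Step~1. It needs Lemma~\ref{lem:markov} to apply to every $D_k$ under an arbitrary admissible boundary law, with the pole spins of $D_k$ being intermediate, random spins rather than boundary data. That lemma is stated for any sub-diamond, so the union bound goes through without any independence between levels.
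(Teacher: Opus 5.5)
Your proof is correct and follows the paper's argument essentially step for step. You use the same inclusion of $\{q_x\ne q_A,\ q_A=q_B\}$ into the union over ancestor levels of the events that the poles of $D_k$ share an overlap but $q_{y_k}$ differs from it, proved by downward induction with \eqref{eq:nesting}, and the same one-level bound from Lemma~\ref{lem:markov}(b), where $q\tau_{y_k}$ is an independent copy of $\sigma_{y_k}$ so the disagreement probability is $\frac12\sech^2h\le\frac12\sech^2\Delta_k$, followed by the union bound.
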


\begin{proof}
Let $\mathrm{Cons}_k$ be the event that $q$ takes the same value at the two poles of $D_k$. If
$q_A=q_B$ and, for every $k$, $\mathrm{Cons}_k$ implies $q_{y_k}=q_{a_{D_k}}$, then by downward
induction with \eqref{eq:nesting} every $\mathrm{Cons}_k$ holds with the common value $q_A$, and at
$k=j$ this gives $q_x=q_{y_j}=q_A$. Hence
\[
\{q_x\ne q_A,\ q_A=q_B\}\subset\bigcup_{k=j}^{N}\big(\mathrm{Cons}_k\cap\{q_{y_k}\ne q_{a_{D_k}}\}\big).
\]
$\mathrm{Cons}_k$ is a function of the spins at the poles of $D_k$. Condition on these spins. By
Lemma~\ref{lem:markov}(b), $\sigma_{y_k}$ has law $\propto e^{h\sigma}$ with
$h=\xi^{(1)}_k\sigma_{a}+\xi^{(2)}_k\sigma_{b}$ ($a,b$ the poles of $D_k$), and $\tau_{y_k}$ is
independent with field $\xi^{(1)}_k\tau_a+\xi^{(2)}_k\tau_b$. On $\mathrm{Cons}_k$ we have
$\tau_a=q\sigma_a$, $\tau_b=q\sigma_b$ for the common value $q$, so $q\tau_{y_k}$ is an independent
copy of $\sigma_{y_k}$, and
\[
\Pp(q_{y_k}\ne q\mid\cdot)=\Pp(\sigma_{y_k}\ne q\tau_{y_k}\mid\cdot)
=\frac12\sech^2h\le\frac12\sech^2\Delta_k,
\]
because $|h|\ge\big||\xi^{(1)}_k|-|\xi^{(2)}_k|\big|$. A union bound over $k$ completes the proof.
\end{proof}

\begin{remark}[Sharpness of the spin chain]\label{rem:S-sharp}
The constant is attained: if $x$ has level $N$ and the poles are fixed with
$s_As_B=-\sgn(\xi^{(1)}_N\xi^{(2)}_N)$, then $q_A=q_B=1$, $|h|=\Delta_N$, and
$\Pp(q_x\ne1\mid J)=\frac12\sech^2\Delta_N$ exactly. The chain of \cite[\S VII]{CE1984} follows the
same ancestors with fixed poles and controls $\E\langle\sigma_x\rangle^2$ (their
$\E\langle s_0\rangle^2$) through an exact linear
recursion \cite[(7.1)--(7.3)]{CE1984}, a one-step loss off the event that $|\xi|$, $|\xi'|$ or
$|\xi\pm\xi'|$ is small \cite[Lemma~7.8]{CE1984}, and a symmetry argument
\cite[Lemma~7.5]{CE1984}. Lemma~\ref{lem:S} is a two-replica union-bound form of that argument; it
needs no symmetry of the coupling law and applies to every admissible boundary law.
\end{remark}

\subsection{The blue chain}\label{sec:bluechain}

\begin{lemma}[Blue chain]\label{lem:Bprime}
For every $J$,
\[
\Pp\big(x\notin C(A)\cup C(B)\mid J\big)\le\sum_{k=j}^{N}\big[\psi(\Delta_k)+e^{-4\Xi_k}\big].
\]
\end{lemma}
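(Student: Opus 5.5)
The plan is to run the same downward induction along the ancestor chain as in Lemma~\ref{lem:S}, with the stronger half $H^*_k$ playing the role of the consistency step. At each level $k$ we ask that $y_k$ be blue-connected to the pole $\pi_k$ of $D_k$ inside $H^*_k$. Let $F_k$ be the event that this fails, i.e.\ that $y_k\not\leftrightarrow\pi_k$ using only blue edges of $H^*_k$.

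\emph{Reduction to the events $F_k$.} Suppose no $F_k$ occurs for $j\le k\le N$. We show by downward induction on $k$ that both poles of $D_k$ lie in $C(A)\cup C(B)$. This is trivial at $k=N$. If both poles of $D_k$ lie in $C(A)\cup C(B)$, then so does $y_k$, since it is blue-connected to the pole $\pi_k$. By \eqref{eq:nesting} the poles of $D_{k-1}$ are among $a_{D_k},b_{D_k},y_k$, which completes the step. At $k=j$ this gives $x=y_j\in C(A)\cup C(B)$. Hence
\[
\Pp\big(x\notin C(A)\cup C(B)\mid J\big)\le\sum_{k=j}^N\Pp(F_k\mid J),
\]
and it remains to show $\Pp(F_k\mid J)\le\psi(\Delta_k)+e^{-4\Xi_k}$.

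\emph{One level.} Condition on $J$ and on the four spins at the poles of $D_k$. Write $\xi^*=\xi_{H^*_k}$ and $\xi'$ for the effective coupling of the other half; $\xi^*$ multiplies the spin at $\pi_k$, and $\xi'$ multiplies the spin at the other pole $\pi'$. Call a value $s$ of $\sigma_{y_k}$ \emph{favourable} if $s=\sgn(\xi^*)\sigma_{\pi_k}$, and likewise for $\tau$. By Lemma~\ref{lem:markov}(b), $\sigma_{y_k}$ has law $\propto e^{sh}$ with $h=\xi^*\sigma_{\pi_k}+\xi'\sigma_{\pi'}$.
\begin{itemize}
\item If $\Delta_k>0$, then $h$ has the sign of $\xi^*\sigma_{\pi_k}$ and $|h|\ge\Delta_k$. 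So the unfavourable value has probability $p(|h|)\le p(\Delta_k)$.
\item If $\Delta_k=0$, this probability is at most $\max(p(|h|),1-p(|h|))$, which is not obviously $\le p(0)=\tfrac12$ when $h\ne0$. This needs care: when $\Delta_k=0$ and $h\ne0$, $h=2\xi^*\sigma_{\pi_k}$ has the favourable sign, so again the probability is $\le\tfrac12=p(0)$.
\end{itemize}
The replica $\tau$ is conditionally independent with the same bound. Since $p$ is decreasing, the probability that at least one replica takes the unfavourable value is at most $1-(1-p(\Delta_k))^2=\psi(\Delta_k)$.

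On the complementary event, both replicas satisfy $\sigma_{\pi_k}\sigma_{y_k}=\tau_{\pi_k}\tau_{y_k}=\sgn\xi^*$. This is exactly a preferred pole configuration for $H^*_k$ with terminals $\pi_k,y_k$ (the tie convention $\sgn0=+1$ matches the convention used for favourable). By Lemma~\ref{lem:markov}(b), conditionally also on $\sigma_{y_k},\tau_{y_k}$, the spins and blue bonds inside $H^*_k$ follow the fixed-pole two-replica law of $H^*_k$. Lemma~\ref{lem:X}(b) then gives conditional probability at most $e^{-4|\xi^*|}=e^{-4\Xi_k}$ that $\pi_k\not\leftrightarrow y_k$ inside $H^*_k$. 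Adding the two contributions and averaging over the conditioning gives the per-level bound, and the union bound finishes the proof.

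\emph{Main obstacle.} The delicate point is the conditioning: $F_k$ must depend only on blue edges of $H^*_k$, and the Markov structure must genuinely deliver the fixed-pole law there, even though we have conditioned on the spins of the poles of $D_k$ in both replicas (an admissible boundary law). Lemma~\ref{lem:markov} provides exactly this. The tie case $\Delta_k=0$ also needs the check described above, which confirms the bound $p(0)$ holds there too.
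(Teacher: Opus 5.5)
Your proposal is correct and follows the paper's proof essentially step for step. It uses the same downward induction on the events that $y_k$ is blue-joined to $\pi_k$ inside $H^*_k$, the bound $\psi(\Delta_k)$ for some replica leaving the configuration preferred by $H^*_k$, and Lemma~\ref{lem:X}(b) via Lemma~\ref{lem:markov}(b) for the $e^{-4\Xi_k}$ term. Your separate treatment of the tie $\Delta_k=0$ is valid but unnecessary: the paper notes directly that the log-odds of the preferred value is at least $2(|\xi^*|-|\xi'|)=2\Delta_k$ in every case.
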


\begin{proof}
Let $\mathrm{Att}_k$ be the event that $y_k$ is joined to $\pi_k$ by blue edges of $H^*_k$. On
$\bigcap_k\mathrm{Att}_k$, downward induction with \eqref{eq:nesting} puts the poles of every $D_k$
in $C(A)\cup C(B)$, and at $k=j$ it puts $x=y_j$ there. Condition on the spins at the poles of $D_k$.
In each replica the log-odds that $(y_k,\pi_k)$ is in the configuration preferred by $H^*_k$,
$\sigma_{y_k}\sigma_{\pi_k}=\sgn\xi_{H^*_k}$, is at least $2(|\xi_{H^*_k}|-|\xi_{\rm other}|)=2\Delta_k$
whatever the other pole does (Lemma~\ref{lem:markov}(b)); the replicas are independent, so both are
preferred with probability at least $(1-p(\Delta_k))^2$. Conditionally also on the spins at $y_k$,
the interior of $H^*_k$ has the fixed-pole two-replica law (Lemma~\ref{lem:markov}(b)), and on the
preferred configuration Lemma~\ref{lem:X}(b) gives
$\Pp(y_k\not\leftrightarrow\pi_k\text{ in }H^*_k\mid\cdot)\le e^{-4|\xi_{H^*_k}|}=e^{-4\Xi_k}$. Hence
$\Pp(\mathrm{Att}_k^c\mid\cdot)\le1-(1-p(\Delta_k))^2+e^{-4\Xi_k}$, and a union bound over $k$ completes the
proof. No condition on the overlaps at the poles is needed: when both replicas are preferred,
$q_{y_k}=q_{\pi_k}$ automatically.
\end{proof}

\subsection{Small-ball profiles}\label{sec:profiles}

The per-level terms of Lemmas~\ref{lem:S} and~\ref{lem:Bprime} are small unless $\Delta_k$ is
small (the halves have nearly equal strengths) or $\Xi_k$ is small (both halves are weak).
Averaged over the couplings, the first is controlled by a bound on the concentration function
$\sup_y\Pp(||\xi_{k-1}|-y|<t)$ and the second by a small-ball bound on $\Pp(|\xi_{k-1}|<t)$; the
rates $\as_k,\ab_k$ below bound these averages, and $S_N,P_N$ the corresponding pole terms
(Lemma~\ref{lem:rates}).

\begin{definition}\label{def:profile}
A \emph{small-ball profile} is a family of nondecreasing functions
$F_k,Q_k:[0,\infty)\to[0,\infty)$, $k\ge0$, such that for all $t\ge0$
\[
\Pp(|\xi_k|<t)\le F_k(t),\qquad \sup_{y\ge0}\Pp\big(\big||\xi_k|-y\big|<t\big)\le Q_k(t).
\]
For $k\ge1$ and $N\ge1$ put
\begin{align*}
\as_k&:=\int_0^\infty\sech^2s\,\tanh s\,\min\{1,Q_{k-1}(s)\}\,ds,\\
\ab_k&:=\int_0^\infty(-\psi'(s))\min\{1,Q_{k-1}(s)\}\,ds+\int_0^\infty4e^{-4s}\min\{1,F_{k-1}(s)\}^2\,ds,\\
S_N&:=\int_0^\infty2\sech^2t\,\tanh t\,\min\{1,F_N(t)\}\,dt,\qquad
P_N:=\int_0^\infty\sech^2t\,\min\{1,F_N(t)\}\,dt,
\end{align*}
and for $1\le j\le N$ let $\Ts_{N,j}:=\sum_{k=j}^N\as_k$ and $\Tb_{N,j}:=\sum_{k=j}^N\ab_k$; for
$x\in\{A,B\}$, which have no level, read $\Ts_{N,k_0(x)}$ and $\Tb_{N,k_0(x)}$ as $0$.
\end{definition}

The superscripts ${\rm s}$ and ${\rm b}$ refer to the spin chain and the blue chain. The law of
$\xi_k$ does not depend on $N$, so neither do $F_k,Q_k,\as_k,\ab_k$.

\begin{lemma}[Averaged rates]\label{lem:rates}
For $k\ge1$: $\E\frac12\sech^2\Delta_k\le\as_k$, $\E[\psi(\Delta_k)+e^{-4\Xi_k}]\le\ab_k$ and
$\as_k\le\ab_k$. Moreover $\E\sech^2\xi_N\le S_N$ and $\E(1-\tanh|\xi_N|)\le P_N$.
\end{lemma}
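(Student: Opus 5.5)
The approach is to write each expectation as a layer-cake (Stieltjes) integral of a decreasing function of a nonnegative random variable, and then replace the distribution function by its profile bound.

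\emph{The conditional law of $\Delta_k$.} First I condition on $\xi^{(2)}_k$. Because $\xi^{(1)}_k$ and $\xi^{(2)}_k$ are independent copies of $\xi_{k-1}$ (Lemma~\ref{lem:struct}(c)),
\[
\Pp(\Delta_k<t\mid\xi^{(2)}_k)=\Pp\big(\big||\xi^{(1)}_k|-y\big|<t\big)\big|_{y=|\xi^{(2)}_k|}\le\min\{1,Q_{k-1}(t)\}.
\]
Taking expectations gives $\Pp(\Delta_k<t)\le\min\{1,Q_{k-1}(t)\}$. In the same way, $\Xi_k<t$ requires both halves to be below $t$, so by independence $\Pp(\Xi_k<t)\le\min\{1,F_{k-1}(t)\}^2$.

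\emph{Layer-cake step.} Let $g$ be a $C^1$ nonincreasing function on $[0,\infty)$ with $g(\infty)=0$. For $Z\ge0$,
\[
g(Z)=\int_Z^\infty(-g'(s))\,ds,
\]
so by Fubini $\E g(Z)=\int_0^\infty(-g'(s))\Pp(Z<s)\,ds$. Using $\Pp(Z<s)$ rather than $\Pp(Z\le s)$ is harmless, because the two differ only on a countable set. Since $-g'\ge0$, I may replace $\Pp(Z<s)$ by any upper bound, in particular by the capped profile.

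\emph{Applying it to each term.}
\begin{itemize}
\item For $g(s)=\frac12\sech^2s$ we have $-g'=\sech^2s\tanh s$, which gives $\E\frac12\sech^2\Delta_k\le\as_k$.
\item For $g=\psi$: $\psi$ is decreasing since $p$ is, and $\psi(\infty)=0$. This gives the first integral in $\ab_k$.
\item For $g(s)=e^{-4s}$ applied to $Z=\Xi_k$, this gives the second integral in $\ab_k$.
\item For $\xi_N$, I use $Z=|\xi_N|$. With $g=\sech^2$, $-g'=2\sech^2t\tanh t$, which gives $S_N$.
\item With $g(t)=1-\tanh t$, $-g'=\sech^2t$, which gives $P_N$. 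Here $F_N$ bounds $\Pp(|\xi_N|<t)$ directly.
\end{itemize}

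\emph{Comparison $\as_k\le\ab_k$.} It suffices that $\sech^2s\tanh s\le-\psi'(s)$ pointwise, because the $F$ term of $\ab_k$ is nonnegative. Using $\frac12\sech^2=2p(1-p)$, we get $-\frac{d}{ds}\frac12\sech^2s=\sech^2s\tanh s$. Also $\psi'=2(1-p)p'$ and $\frac{d}{ds}[2p(1-p)]=2p'(1-2p)$. Since $p'<0$, the inequality reduces to $1-2p\le 1-p$, which holds since $p\ge0$.

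The only slightly delicate point is the integrability in the Fubini step. It is fine because every integrand is nonnegative, and $\int_0^\infty(-g')=g(0)<\infty$, so no obstacle remains.
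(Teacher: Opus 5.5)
Your proposal is correct and follows the paper's proof essentially step for step: the layer-cake identity $\E g(Z)=\int_0^\infty(-g'(s))\Pp(Z<s)\,ds$, conditioning on one half to bound $\Pp(\Delta_k<s)$ by $\min\{1,Q_{k-1}(s)\}$, independence for $\Pp(\Xi_k<s)$, and a pointwise derivative comparison for $\as_k\le\ab_k$. Your reduction of that comparison to $1-2p\le1-p$ is the same computation as the paper's observation that $\psi-\tfrac12\sech^2=p^2$ is nonincreasing.
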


The proof (Appendix~\ref{app:proofs}) is a layer-cake computation; the key point is that, by
independence of the two halves, $\Pp(\Delta_k<s)\le\sup_y\Pp(||\xi_{k-1}|-y|<s)$ and
$\Pp(\Xi_k<s)=\Pp(|\xi_{k-1}|<s)^2$.

\subsection{The transfer theorem}\label{sec:transferthm}

\begin{theorem}[Transfer]\label{thm:transfer}
Let $(F_k,Q_k)$ be a small-ball profile. For every $N\ge1$ and every non-pole $x$ of level $j$:
\begin{enumerate}
\item[\textup{1.}] Free poles: $\E\langle\sigma_A\sigma_B\rangle^2=1-\E\sech^2\xi_N\ge1-S_N$.
\item[\textup{2.}] Fixed poles, for any $s_A,s_B$: $\E\langle\sigma_x\rangle^2\ge1-2\Ts_{N,j}$. If the
law of $J_e$ is symmetric, $\E\langle\sigma_x\rangle=0$.
\item[\textup{3.}] Free poles: $\E\langle\sigma_A\sigma_x\rangle^2\ge(1-2\Ts_{N,j}-S_N)_+$, and
\[
\E\langle R_{12}^2\rangle\ge\Big(\frac1{|\V_N|}\sum_{z\in\V_N}\big(1-2\Ts_{N,k_0(z)}-S_N\big)_+\Big)^2.
\]
\item[\textup{4.}] Free poles:
\[
\Pp\big(x\notin C(A)\cup C(B)\big)\le\Tb_{N,j},\qquad\Pp(A\not\leftrightarrow B)\le P_N .
\]
Let $\bar X_N:=|\V_N|^{-1}\sum_{z\in\V_N}\Tb_{N,k_0(z)}$. Then for every $\theta\in(0,1)$,
\begin{gather*}
\Pp\big(B\in C(A),\ |C(A)|>(1-\theta)|\V_N|\big)\ge1-\bar X_N/\theta-P_N,\\
\frac{\E|C(A)|}{|\V_N|}\ge\frac1{|\V_N|}\sum_{z\in\V_N}\big(1-\Tb_{N,k_0(z)}-P_N\big)_+ .
\end{gather*}
\item[\textup{5.}] Free poles, $1\le m\le N$:
\[
\Pp\big(M_N>(2n)^{-m}|\V_N|\big)\le P_N+v_m\Tb_{N,N-m+1},\qquad
\E M_N/|\V_N|\le(2n)^{-m}+P_N+v_m\Tb_{N,N-m+1}.
\]
\item[\textup{6.}] Free poles: $\E\,\Imb_N^2\ge(1-2\bar X_N)_+^2-P_N$ and
$\E\,\Imb_N^2\ge\big((\E|C(A)|-\E M_N)/|\V_N|\big)_+^2$.
\end{enumerate}
\end{theorem}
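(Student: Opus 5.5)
The plan is to derive each item by averaging the conditional chain bounds, Lemmas~\ref{lem:S} and~\ref{lem:Bprime}, over $J$ with Lemma~\ref{lem:rates}. Each item then combines the resulting per-vertex estimates with Lemma~\ref{lem:rec}(c), Lemma~\ref{lem:X}(c), and simple counting and second-moment arguments.

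Item~1 follows directly from Lemma~\ref{lem:rec}(c): $\langle\sigma_A\sigma_B\rangle^2=\tanh^2\xi_N=1-\sech^2\xi_N$, and $\E\sech^2\xi_N\le S_N$ by Lemma~\ref{lem:rates}.

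For item~2 we use fixed poles, where $q_A=q_B=1$ holds surely. Lemma~\ref{lem:S} averaged with Lemma~\ref{lem:rates} gives $\Pp(q_x\ne1)\le\Ts_{N,j}$. Then \eqref{eq:twopoint} yields $\E\langle\sigma_x\rangle^2=\E q_x=1-2\Pp(q_x=-1)\ge1-2\Ts_{N,j}$. For the symmetric statement, flip the signs of all couplings of the branch containing $x$ in $D_N$ (or use the global gauge $J_e\mapsto-J_e$ on edges at $x$ combined with $\sigma_x\mapsto-\sigma_x$). This preserves the law of $J$ and negates $\langle\sigma_x\rangle$, so $\E\langle\sigma_x\rangle=0$.

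Item~3 uses free poles. On $\{q_A=q_B\}$ Lemma~\ref{lem:S} applies, and $\Pp(q_A\ne q_B)=\frac12\E\sech^2\xi_N\le S_N/2$. So $\Pp(q_xq_A=-1)\le\Ts_{N,j}+S_N/2$, and therefore $\E\langle\sigma_A\sigma_x\rangle^2=\E[q_Aq_x]\ge1-2\Ts_{N,j}-S_N$. For the overlap, write $R_{12}^2\ge$ by Cauchy--Schwarz: $\E\langle R_{12}^2\rangle\ge(\E[q_AR_{12}])^2$, since $|q_A|=1$ gives $\E[(q_AR_{12})^2]=\E R_{12}^2$. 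Then $\E[q_AR_{12}]=|\V_N|^{-1}\sum_z\E[q_Aq_z]$, and each term is at least $(1-2\Ts-S_N)_+$ or at least its negative part. This needs a little care: I would use the lower bound $1-2\Ts-S_N$ for each term, whose sum is possibly negative, and take the positive part only at the end. Taking $(\cdot)_+$ inside the sum requires terms that are nonnegative, which holds since $\E[q_Aq_z]=\E\langle\sigma_A\sigma_z\rangle^2\ge0$. Hence $\E[q_Aq_z]\ge(1-2\Ts-S_N)_+$ termwise, and squaring gives the bound. For $z=A,B$ the terms are $1$ and $1-S_N$ respectively, consistent with reading $\Ts=0$.

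Item~4 also uses free poles, which form an admissible law. Lemma~\ref{lem:Bprime} with Lemma~\ref{lem:rates} gives the first bound. The second is Lemma~\ref{lem:X}(c) averaged with Lemma~\ref{lem:rates}. Let $Y$ be the number of vertices outside $C(A)\cup C(B)$. Then $\E Y\le\bar X_N|\V_N|$, and Markov's inequality gives $\Pp(Y\ge\theta|\V_N|)\le\bar X_N/\theta$. On $\{B\in C(A)\}\cap\{Y<\theta|\V_N|\}$ we get $|C(A)|>(1-\theta)|\V_N|$, and a union bound gives the probability estimate. For the expectation, $\Pp(z\in C(A))\ge1-\Pp(z\notin C(A)\cup C(B))-\Pp(A\not\leftrightarrow B)$; sum over $z$, using that probabilities are nonnegative to insert $(\cdot)_+$.

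Item~5 is the main obstacle, because it needs the structural counting. On $\{A\leftrightarrow B\}\cap\{$every $w\in W_m$ lies in $C(A)\cup C(B)\}$, every cluster other than $C(A)$ avoids $W_m\cup\{A,B\}$. By Lemma~\ref{lem:struct}(d) such a cluster then lies in one scale-$(N-m)$ interior, so its size is at most $v_{N-m}\le(2n)^{-m}|\V_N|$. A union bound over $W_m$ uses that every $w\in W_m$ has level $\ge N-m+1$, so $\Tb_{N,k_0(w)}\le\Tb_{N,N-m+1}$, and $|W_m|=v_m$. The expectation bound follows since $M_N\le|\V_N|$.

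Item~6 uses the fact that $q$ is constant on blue clusters. If $C(A)$ is the largest cluster with its overlap sign and the other sign has largest cluster of size at most $M_N$, then $|\Imb_N|\ge(|C(A)|-M_N)/|\V_N|$. In general $|\Imb_N|\ge(|C(A)|-M_N)_+/|\V_N|$, because $C(A)$ is one of the two candidates and the other maximal cluster is not $C(A)$, hence has size at most $M_N$, or $C(A)$ is beaten by a same-sign cluster of size at most $M_N$. The second inequality of item~6 then follows by Jensen. For the first, on $\{B\in C(A)\}$ we have $M_N\le Y$ and $|C(A)|\ge|\V_N|-Y$, so $|\Imb_N|\ge1-2Y/|\V_N|$. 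Then $\E\Imb_N^2\ge\E[(1-2Y/|\V_N|)_+^2]-\Pp(A\not\leftrightarrow B)$, and Jensen on the convex map $u\mapsto(1-2u)_+^2$ with $\E Y/|\V_N|\le\bar X_N$ finishes.
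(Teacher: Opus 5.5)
Your proposal is essentially the paper's proof, item by item. It uses the same averaging of Lemmas~\ref{lem:S} and~\ref{lem:Bprime} through Lemma~\ref{lem:rates}, and the same Cauchy--Schwarz step against $q_A$ with termwise positivity from $\E[q_Aq_z]=\E\langle\sigma_A\sigma_z\rangle^2\ge0$. It also has the same Markov and union bounds, the counting of Lemma~\ref{lem:struct}(d), and the pointwise bound $|\Imb_N|\ge(|C(A)|-M_N)/|\V_N|$. The paper gets that last bound more directly from $|C_{q_A}|\ge|C(A)|$ and $|C_{-q_A}|\le M_N$.

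One correction, in item~2. Your first symmetry argument, flipping all couplings of the branch of $\DL_N$ that contains $x$, does not negate $\langle\sigma_x\rangle$ in general. That flip is a gauge transformation that leaves the poles fixed, but it multiplies $\sigma_x$ by $(-1)^{d(A,x)}$, so it negates $\langle\sigma_x\rangle$ only when $x$ is at odd graph distance from the poles. Concretely, take $N\ge2$ and let $x$ be the midpoint of that branch. Each half is a copy of $\DL_{N-1}$ with $N-1\ge1$, and flipping all its couplings leaves its effective coupling unchanged, because $(-a)\star(-b)=a\star b$ and parallel sums preserve this. Hence $\langle\sigma_x\rangle_{s_A,s_B}=\tanh(\xi^{(1)}s_A+\xi^{(2)}s_B)$ is unchanged. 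Only your parenthetical alternative works: reverse the couplings of the edges at $x$, which is the gauge flip $\sigma_x\mapsto-\sigma_x$. This is exactly the paper's argument.
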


\begin{proof}
1. Lemma~\ref{lem:rec}(c) and Lemma~\ref{lem:rates}.

2. With fixed poles $q_A=q_B=1$, and by \eqref{eq:twopoint}
$\langle\sigma_x\rangle^2=\E[q_x\mid J]=1-2\Pp(q_x\ne q_A,\,q_A=q_B\mid J)$. Apply
Lemma~\ref{lem:S}, average over $J$ and use Lemma~\ref{lem:rates}. If the law of $J_e$ is symmetric,
reversing the signs of the couplings of the edges at $x$ preserves the law of $J$ and maps
$\langle\sigma_x\rangle$ to $-\langle\sigma_x\rangle$ (cf.\ \cite[Lemma~7.5]{CE1984}).

3. $\{q_x\ne q_A\}\subset\{q_A\ne q_B\}\cup\{q_x\ne q_A,q_A=q_B\}$, and
$2\Pp(q_A\ne q_B)=\E\sech^2\xi_N\le S_N$ (Lemma~\ref{lem:rec}(c)); so
$\E[q_Aq_x]=1-2\Pp(q_x\ne q_A)\ge1-S_N-2\Ts_{N,j}$, while $\E[q_Aq_x]=\E\langle\sigma_A\sigma_x\rangle^2\ge0$.
The same bound holds for $z\in\{A,B\}$ with $\Ts:=0$, since $\E[q_Aq_A]=1$ and
$\E[q_Aq_B]=1-\E\sech^2\xi_N$. Since $q_A^2=1$, Cauchy--Schwarz gives
$\E R_{12}^2\ge(\E[R_{12}q_A])^2=(|\V_N|^{-1}\sum_z\E[q_Aq_z])^2$.

4. The first bound is Lemma~\ref{lem:Bprime} with Lemma~\ref{lem:rates}; the second is
Lemma~\ref{lem:X}(c) for $G=\DL_N$ with Lemma~\ref{lem:rates}. Let
$Y:=|\V_N\setminus(C(A)\cup C(B))|/|\V_N|$, so $\E Y\le\bar X_N$. On
$\{A\leftrightarrow B\}\cap\{Y<\theta\}$ we have $B\in C(A)$ and $|C(A)|>(1-\theta)|\V_N|$, and Markov's
inequality gives the first display. For the second,
$\Pp(z\in C(A))\ge\Pp(z\in C(A)\cup C(B),\,A\leftrightarrow B)\ge1-\Tb_{N,k_0(z)}-P_N$.

5. By Lemma~\ref{lem:struct}(d), on $\{A\leftrightarrow B\}\cap\{W_m\subset C(A)\cup C(B)\}$ every blue
cluster other than $C(A)=C(B)$ avoids $W_m\cup\{A,B\}$ and so has at most
$v_{N-m}\le(2n)^{-m}|\V_N|$ vertices. Every $w\in W_m$ has level at least $N-m+1$, so
$\Pp(w\notin C(A)\cup C(B))\le\Tb_{N,N-m+1}$ (the terms $\ab_k$ are nonnegative). A union bound over the
$v_m$ vertices of $W_m$ gives the first bound, and $M_N\le|\V_N|$ gives the second.

6. On $\{A\leftrightarrow B\}$, $C(A)=C(B)$ has overlap $q_A$ and $(1-Y)|\V_N|$ vertices, and every
cluster of overlap $-q_A$ lies in $\V_N\setminus C(A)$; hence $|\Imb_N|\ge1-2Y$ there. Since
$y\mapsto(1-2y)_+^2$ is convex, nonincreasing and at most $1$,
\[
\E\,\Imb_N^2\ge\E\big[(1-2Y)_+^2\1\{A\leftrightarrow B\}\big]\ge\E(1-2Y)_+^2-P_N\ge(1-2\E Y)_+^2-P_N
\ge(1-2\bar X_N)_+^2-P_N .
\]
For the second bound, $|C_{q_A}|\ge|C(A)|$ and $|C_{-q_A}|\le M_N$ pointwise, so
$|\Imb_N|\ge(|C(A)|-M_N)/|\V_N|$, and $\E\,\Imb_N^2\ge(\E|\Imb_N|)^2$.
\end{proof}

\begin{corollary}[Level-weighted limits]\label{cor:limits}
Let $(F_k,Q_k)$ be a small-ball profile with $\Ts_{\infty,j}:=\sum_{k\ge j}\as_k<\infty$ and
$\Tb_{\infty,j}:=\sum_{k\ge j}\ab_k<\infty$, and put
\[
\kappa^{\rm s}:=\sum_{j\ge1}(2n-1)(2n)^{-j}(1-2\Ts_{\infty,j})_+,\qquad
\kappa^{\rm b}:=\sum_{j\ge1}(2n-1)(2n)^{-j}(1-\Tb_{\infty,j})_+ .
\]
\begin{enumerate}
\item[\textup{(a)}] If $S_N\to0$, then $\liminf_N\E\langle R_{12}^2\rangle\ge(\kappa^{\rm s})^2$.
\item[\textup{(b)}] If $P_N\to0$, then $\liminf_N\E|C(A)|/|\V_N|\ge\kappa^{\rm b}$.
\item[\textup{(c)}] If $P_N\to0$, then $\E M_N/|\V_N|\to0$, so that
$M_N/|\V_N|\to0$ in probability, and $\liminf_N\E\,\Imb_N^2\ge(\kappa^{\rm b})^2$.
\end{enumerate}
Both $\kappa^{\rm s}$ and $\kappa^{\rm b}$ are positive, and
$\kappa^{\rm s}\ge\sum_{j\ge1}(2n-1)(2n)^{-j}(1-2\Tb_{\infty,j})_+$.
\end{corollary}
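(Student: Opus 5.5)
The plan is to pass to the limit $N\to\infty$ in the finite-$N$ bounds of Theorem~\ref{thm:transfer}, using the exact level counts \eqref{eq:counts}. The two ingredients are monotonicity in $N$ of the tail sums, $\Ts_{N,j}\le\Ts_{\infty,j}$ and $\Tb_{N,j}\le\Tb_{\infty,j}$ (the rates are nonnegative), and Fatou's lemma for sums over levels.

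\emph{Step 1: (a) and (b).} Group the vertices in item~3 by level. There are $N_j$ vertices of level $j$, and the two poles contribute nonnegatively. This gives
\[
\E\langle R_{12}^2\rangle\ge\Big(\sum_{j=1}^N\frac{N_j}{|\V_N|}\big(1-2\Ts_{\infty,j}-S_N\big)_+\Big)^2 .
\]
By \eqref{eq:counts}, $N_j/|\V_N|=(2n-1)(2n)^{-j}(1-\varrho_N)$ with $\varrho_N\to0$, and $S_N\to0$. Each summand therefore converges to $(2n-1)(2n)^{-j}(1-2\Ts_{\infty,j})_+$. Fatou's lemma (or dominated convergence, since the summands are bounded by $(2n)^{-j}$ up to a constant) gives the liminf of the inner sum as at least $\kappa^{\rm s}$. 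Since $x\mapsto x^2$ is increasing on $[0,\infty)$, this yields (a). Part (b) is identical, starting from the $\E|C(A)|$ bound of item~4 with $P_N\to0$.

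\emph{Step 2: (c).} Fix $m$ and apply item~5:
\[
\E M_N/|\V_N|\le(2n)^{-m}+P_N+v_m\Tb_{N,N-m+1}.
\]
Here $\Tb_{N,N-m+1}\le\Tb_{\infty,N-m+1}$, which is the tail of a convergent series and tends to $0$ as $N\to\infty$, while $v_m$ is fixed. Hence $\limsup_N\E M_N/|\V_N|\le(2n)^{-m}$. Letting $m\to\infty$ shows the limit is $0$, and Markov's inequality gives convergence in probability. The imbalance bound then follows from the second inequality in item~6,
\[
\E\,\Imb_N^2\ge\big((\E|C(A)|-\E M_N)/|\V_N|\big)_+^2,
\]
combined with (b) and $\E M_N/|\V_N|\to0$.

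\emph{Step 3: positivity and comparison.} The terms $\as_k,\ab_k$ are nonnegative and their series converge, so $\Ts_{\infty,j},\Tb_{\infty,j}\to0$ as $j\to\infty$. Hence for large $j$ the factors $(1-2\Ts_{\infty,j})_+$ and $(1-\Tb_{\infty,j})_+$ are positive, and so $\kappa^{\rm s},\kappa^{\rm b}>0$. For the final inequality, Lemma~\ref{lem:rates} gives $\as_k\le\ab_k$, so $\Ts_{\infty,j}\le\Tb_{\infty,j}$, and the claim follows termwise.

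The only delicate point is Step~2: the factor $v_m$ must be kept fixed while $N\to\infty$, and only afterwards may $m$ be sent to infinity. Everything else is bookkeeping.
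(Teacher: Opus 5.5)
Your proposal is correct and follows the paper's proof essentially step for step. For (a)--(b) you group vertices by level via \eqref{eq:counts}, use $\Ts_{N,j}\le\Ts_{\infty,j}$, and pass to the limit termwise over levels. For (c) you apply item~5 with $m$ fixed, then let $m\to\infty$, and combine with the second bound of item~6. Positivity and the comparison come from $\Ts_{\infty,j},\Tb_{\infty,j}\to0$ and $\as_k\le\ab_k$. The only differences are cosmetic and equivalent here: you bound $\Tb_{N,N-m+1}$ by a convergent-series tail rather than as a sum of $m$ vanishing terms, and you allow Fatou's lemma in place of dominated convergence.
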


\begin{proof}
Drop the poles and use \eqref{eq:counts} and $\Ts_{N,j}\le\Ts_{\infty,j}$:
\[
|\V_N|^{-1}\sum_z(1-2\Ts_{N,k_0(z)}-S_N)_+\ge(1-\varrho_N)\sum_{j=1}^{N}(2n-1)(2n)^{-j}(1-2\Ts_{\infty,j}-S_N)_+,
\]
and let $N\to\infty$ by dominated convergence over $j$. Part (b) is the same argument with item 4.
For (c), item 5 with $m$ fixed gives $\limsup_N\E M_N/|\V_N|\le(2n)^{-m}$, since
$\Tb_{N,N-m+1}$ is a sum of $m$ terms $\ab_k$ with $k\ge N-m+1$, and $\ab_k\to0$ because
$\sum_k\ab_k<\infty$; let $m\to\infty$. Then item 6 and
(b) give the imbalance bound. Positivity: $\Ts_{\infty,j},\Tb_{\infty,j}\to0$ as $j\to\infty$, and
$\as_k\le\ab_k$ gives the comparison.
\end{proof}

\begin{remark}[What the coupling-growth input must supply]\label{rem:feeders}
Bounds uniform in $N$ need $\sum_k\as_k<\infty$, $\sum_k\ab_k<\infty$, $\sup_NS_N$ and
$\sup_NP_N$ small; the limits of Corollary~\ref{cor:limits} need in addition $S_N\to0$ for part
(a) and $P_N\to0$ for parts (b) and (c).
Section~\ref{sec:partI} supplies all of these from a density bound on an event
(Lemma~\ref{lem:ratesH}); Section~\ref{sec:partII} supplies them from certified distribution
functions, with $Q_k(t)=F_k(2t)$ by unimodality (Section~\ref{sec:proofB}).
\end{remark}

\subsection{Free and fixed poles}\label{sec:freefixed}

Lemmas~\ref{lem:S} and~\ref{lem:Bprime} condition only on the pole spins of each ancestor, so they
hold for every admissible boundary law. Free poles pay only $2\Pp(q_A\ne q_B)=\E\sech^2\xi_N$ in
item 3; fixed poles have $q_A=q_B=1$, and Lemma~\ref{lem:S} gives
$\langle\sigma_x\rangle^2\ge1-\sum_{k=j}^N\sech^2\Delta_k$ pointwise in $J$. Collet and Eckmann
observe that ``the value of the Edwards-Anderson parameter is independent of the boundary
conditions'' \cite[p.~397]{CE1984}. This common treatment of free and fixed poles covers only the
spin items 2 and 3: items 4--6 use $\Pp(A\not\leftrightarrow B)\le P_N$, which holds for free poles (and,
since $e^{-4x}\le1-\tanh x$ for $x\ge0$, for fixed poles in the preferred configuration); for
fixed poles of the other sign we have no such bound.

\section{Coupling growth for \texorpdfstring{$n\ge4$}{n ≥ 4}}\label{sec:partI}

This section proves Theorem~\ref{thm:A}. We propagate a density bound on an event:
$\Hyp_k(r,w)$ (Definition~\ref{def:H}) says that $\xi_k$, restricted to an event of probability
at least $1-w$, has density at most $r$. Proposition~\ref{prop:A} passes from level $k$ to level
$k+1$, multiplying $r$ by a factor $\Gamma$ that is close to $\kappa_n$ when $r,w$ are small and
the cuts $L,R$ are large; Rogozin's inequality (Section~\ref{sec:rogozin}) brings in the constant
$\kappa_n$, and $\kappa_n<1$ exactly when $n\ge4$ (Lemma~\ref{lem:kappa}). Lemmas~\ref{lem:T}
and~\ref{lem:E} iterate the step (Section~\ref{sec:iteration}), and Lemma~\ref{lem:ratesH} turns
the resulting bounds into a small-ball profile for Theorem~\ref{thm:transfer}.

\subsection{Rogozin's inequality and the constants \texorpdfstring{$\kappa_g$}{κ\_g}}\label{sec:rogozin}

For a real random variable $X$ let $\mathsf M(X)$ be the essential supremum of its density
($+\infty$ if it has none). For widths $v_1,\dots,v_g>0$ let
\[
h_v(x):=v^{-1}\1\{|x|\le v/2\},\qquad \Phi_g(v_1,\dots,v_g):=(h_{v_1}*\dots*h_{v_g})(0),
\]
the density at $0$ of a sum of independent centred uniform variables of widths $v_i$. Then
$\Phi_g(\lambda v)=\lambda^{-1}\Phi_g(v)$.

\begin{lemma}[Rogozin]\label{lem:rogozin}
If $X_1,\dots,X_g$ are independent with $\mathsf M(X_i)\le m_i<\infty$, then
$\mathsf M(X_1+\dots+X_g)\le\Phi_g(1/m_1,\dots,1/m_g)$.
\end{lemma}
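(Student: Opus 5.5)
The plan is to prove the inequality in two steps: first reduce to symmetric decreasing densities by a rearrangement inequality, then show that among symmetric decreasing densities bounded by $m_i$ the uniform one of width $1/m_i$ is extremal. For $g=1$ there is nothing to prove, since $\Phi_1(1/m_1)=m_1$. For $g\ge2$, let $f_i$ be the density of $X_i$. The density of the sum is $f_1*\dots*f_g$. It is continuous, because $f_1\in L^\infty$ and the remaining factors are in $L^1$. So its essential supremum is a supremum of pointwise values
\[
(f_1*\dots*f_g)(x)=\int f_1(x-y_2-\dots-y_g)\,f_2(y_2)\cdots f_g(y_g)\,dy_2\cdots dy_g .
\]
This is a multilinear integral of products of functions of linear forms. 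By the Brascamp--Lieb--Luttinger rearrangement inequality, it does not decrease when each $f_i$ is replaced by its symmetric decreasing rearrangement $f_i^*$. The translate $f_1(x-\cdot)$ has the same rearrangement as $f_1$, so the bound holds uniformly in $x$. Rearrangement also preserves mass and the bound $\|f_i^*\|_\infty\le m_i$. Hence it suffices to show that $(f_1^**\dots*f_g^*)(0)\le\Phi_g(1/m_1,\dots,1/m_g)$ for symmetric decreasing probability densities with $f_i^*\le m_i$.

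For the extremal step we replace the $f_i^*$ by uniform densities one at a time. Fix $i$ and let $k$ be the convolution of the other $g-1$ densities, which by induction are already either the $f_l^*$ or the uniform densities $h_{1/m_l}$. Each of these is symmetric and unimodal, and convolutions of symmetric unimodal densities are again symmetric unimodal. So $k$ is symmetric and nonincreasing on $[0,\infty)$, and $K(a):=\int_{-a}^ak$ is concave and nondecreasing on $[0,\infty)$. By the layer-cake formula, $f_i^*$ is a mixture of uniforms: $f_i^*=\int h_v\,d\mu(v)$ for a probability measure $\mu$ on widths. Its value at $0$ is $f_i^*(0)=\int v^{-1}\,d\mu(v)\le m_i$. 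Moreover, with $u=1/v$,
\[
(h_v*k)(0)=u\,K\!\big(1/(2u)\big)=:G(u).
\]
The function $G$ is a perspective of the concave function $K$, so it is concave in $u$, and it is nondecreasing in $u$. Jensen's inequality therefore gives
\[
(f_i^**k)(0)=\int G\,d\tilde\mu\le G\Big(\int u\,d\tilde\mu\Big)\le G(m_i)=(h_{1/m_i}*k)(0),
\]
where $\tilde\mu$ is the image of $\mu$ under $v\mapsto1/v$. Doing this for $i=1,\dots,g$ in turn replaces every $f_i^*$ by $h_{1/m_i}$, and the resulting value is exactly $\Phi_g(1/m_1,\dots,1/m_g)$.

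The main obstacle is the rearrangement step. It needs the multi-function Brascamp--Lieb--Luttinger inequality rather than the three-function Riesz inequality, and we would simply quote it. Alternatively, we could cite Rogozin \cite{Rogozin1987} directly for the whole statement, since the lemma is exactly his theorem. The concavity of the perspective $u\mapsto uK(1/(2u))$ and the layer-cake decomposition are routine. The only care needed is the degenerate case $m_i=\infty$, which is excluded by hypothesis.
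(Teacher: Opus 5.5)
Your proof is correct, but it takes a different route from the paper. The paper does not reprove Rogozin's inequality. It quotes it for $m_i=\mathsf M(X_i)$, from Rogozin or from Madiman--Melbourne--Xu, and supplies only the extension to upper bounds $m_i$, namely that $\Phi_g$ is nonincreasing in each width. That extension is shown exactly as your monotonicity of $G$. By Wintner's theorem the sum $W$ of the other uniforms is symmetric unimodal. The density of $U+W$ at $0$ is $v^{-1}\Pp(|W|\le v/2)$. Finally, $t\mapsto\Pp(|W|\le t)/t$ is nonincreasing by concavity.

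You instead rebuild the theorem. Brascamp--Lieb--Luttinger reduces you to symmetric decreasing densities. Then the layer-cake decomposition, together with Jensen for the concave perspective $G(u)=uK(1/(2u))$, replaces each factor by $h_{1/m_i}$. Because $G$ is also nondecreasing, the slack $f_i^*(0)\le m_i$ is absorbed in the same step, so the upper-bound version needs no separate monotonicity lemma. Your route is self-contained apart from Brascamp--Lieb--Luttinger, a heavier tool than the inequality being proved, and it shows why the uniform law is extremal. The paper's route is two lines.

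One correction to your closing remark: citing Rogozin ``for the whole statement'' is not quite enough. His theorem is stated with $m_i=\mathsf M(X_i)$, and with mere upper bounds $m_i$ you still need $\Phi_g$ to be monotone in each width. That is precisely what the paper proves, and what your monotonicity of $G$ provides.
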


Rogozin \cite[Theorem, p.~53]{Rogozin1987} proves this with $m_i=\mathsf M(X_i)$ (see also
\cite[Thm.~1.4]{MMX2017}); the extension to upper bounds $m_i$ uses that $\Phi_g$ is nonincreasing
in each width, which is shown in Appendix~\ref{app:proofs}.

Isopi and Newman \cite[Lemma~4]{IN1992} combine Rogozin's inequality with Ball's cube-slicing bound
to control the density of weighted sums, and von Soosten and Warzel \cite{vSW2017} follow density
suprema along a hierarchical renormalization map, with bounds that allow them to grow from level to
level. Here the inequality gives a contraction of the map \eqref{eq:intro-rec}.

\begin{lemma}[The constants $\kappa_g$]\label{lem:kappa}
Let $p_2(x):=2(1-2|x|)_+$ and $\kappa_g:=p_2^{*g}(0)$. Then $\kappa_g=\E\,\Phi_g(V_1,\dots,V_g)$
for independent $V_i$ with density $2v$ on $[0,1]$; $\kappa_g=2f_{2g}(g)$, where $f_m$ is the
density of the sum of $m$ independent uniform variables on $[0,1]$;
$\kappa_1=2$, $\kappa_2=4/3$, $\kappa_3=11/10$, $\kappa_4=302/315$; $\kappa_{g+1}\le\kappa_g$; and
$\kappa_g<1$ if and only if $g\ge4$.
\end{lemma}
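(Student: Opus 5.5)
The plan is to prove the mixture representation first, read off the Irwin--Hall formula from it, and get monotonicity from symmetric unimodality; the ``if and only if'' then follows from the two explicit values $\kappa_3$ and $\kappa_4$.

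\emph{Mixture representation.} For $V$ with density $2v$ on $[0,1]$ and every $x$,
\[
\E\,h_V(x)=\int_0^1 2v\cdot v^{-1}\1\{|x|\le v/2\}\,dv=\int_{\min(2|x|,1)}^1 2\,dv=2(1-2|x|)_+=p_2(x).
\]
So $p_2$ is the mixture over $V$ of centred uniform densities of width $V$. Take $V_1,\dots,V_g$ independent with this law. Since all integrands are nonnegative, Tonelli lets us exchange the expectations with the convolution integrals, giving
\[
p_2^{*g}=\E\,(h_{V_1}*\dots*h_{V_g}).
\]
Evaluating at $0$ gives $\kappa_g=\E\,\Phi_g(V_1,\dots,V_g)$. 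One point needs care: $\Phi_g$ must be read as the continuous version of the density at $0$. This is harmless for $g\ge2$, where the convolution is continuous. For $g=1$ we interpret $p_2^{*1}(0)=p_2(0)=2$ and use the convention $h_v(0)=1/v$, for which $\E[1/V]=2$.

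\emph{Irwin--Hall form and the values.} If $U,U'$ are independent uniform on $[0,1]$, then $U+U'-1$ has the triangular density $(1-|x|)_+$. Hence $(U+U'-1)/2$ has density $2(1-2|x|)_+=p_2$. Therefore $p_2^{*g}$ is the density of $(S_{2g}-g)/2$, where $S_{2g}$ is a sum of $2g$ independent uniforms on $[0,1]$. By the change of variables, $p_2^{*g}(0)=2f_{2g}(g)$. The Irwin--Hall formula
\[
f_m(x)=\frac1{(m-1)!}\sum_{k\le x}(-1)^k\binom mk(x-k)^{m-1}
\]
gives $f_2(1)=1$, $f_4(2)=\tfrac23$, $f_6(3)=\tfrac{11}{20}$ and $f_8(4)=\tfrac{151}{315}$. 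Doubling yields $\kappa_1=2$, $\kappa_2=4/3$, $\kappa_3=11/10$ and $\kappa_4=302/315$. For instance, $f_6(3)=(243-6\cdot32+15)/120=66/120$. These are routine finite sums, so I will only display them.

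\emph{Monotonicity.} The step needing a citation is that $p_2^{*g}$ is symmetric and unimodal, i.e.\ nonincreasing in $|x|$. Each $p_2$ is symmetric unimodal, and convolutions of symmetric unimodal densities are symmetric unimodal (Wintner's theorem). Alternatively, one can note that each $h_V$ is symmetric unimodal, convolutions of symmetric uniforms are symmetric unimodal, and mixtures preserve the property. Granting this, $\sup_y p_2^{*g}(y)=p_2^{*g}(0)=\kappa_g$, and
\[
\kappa_{g+1}=\int p_2^{*g}(y)\,p_2(-y)\,dy\le\kappa_g\int p_2=\kappa_g .
\]

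\emph{The equivalence.} Since $\kappa_g$ is nonincreasing, $\kappa_g\le\kappa_4<1$ for all $g\ge4$. For $g\le3$ we have $\kappa_g\ge\kappa_3=11/10>1$. Hence $\kappa_g<1$ exactly when $g\ge4$.

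The only potential obstacle is the unimodality input; everything else is explicit computation.
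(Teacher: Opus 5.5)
Your proof is correct and follows the paper's argument essentially step for step: the mixture identity $\E h_V=p_2$ with Tonelli, the representation of $p_2^{*g}$ as the density of $(U_1+\dots+U_{2g}-g)/2$ evaluated with the Irwin--Hall formula, and $\kappa_{g+1}\le\|p_2^{*g}\|_\infty\|p_2\|_1=\kappa_g$ from symmetric unimodality. You also spell out the ``if and only if'' from monotonicity and $\kappa_4<1<\kappa_3$, which the paper leaves implicit; the $g=1$ convention you mention is already built into the definition $h_v(0)=1/v$.
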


\begin{proof}
$\E h_V(x)=\int_0^1 2v\cdot v^{-1}\1\{|x|\le v/2\}\,dv=p_2(x)$, and Tonelli gives
$\E\Phi_g(V)=p_2^{*g}(0)$. Since $p_2$ is the density of $(U_1+U_2-1)/2$ with $U_i$ uniform on
$[0,1]$, $p_2^{*g}$ is the density of $(U_1+\dots+U_{2g}-g)/2$, whose value at $0$ is $2f_{2g}(g)$;
the Irwin--Hall formula
$f_m(x)=\frac1{(m-1)!}\sum_{i=0}^{\lfloor x\rfloor}(-1)^i\binom mi(x-i)^{m-1}$ gives the four
values. The density $p_2^{*g}$ is symmetric unimodal, so
$\kappa_g=\|p_2^{*g}\|_\infty$ and $\kappa_{g+1}\le\|p_2^{*g}\|_\infty\|p_2\|_1=\kappa_g$.
\end{proof}

The law with density $2v$ on $[0,1]$ is that of the larger of two independent uniform variables;
this is how $\kappa_g$ arises in Proposition~\ref{prop:A}, where the probability of the range left
to the smaller half plays the role of $V_i$ (Section~\ref{sec:mechanism}).

\subsection{Density bounds on events}\label{sec:H}

\begin{definition}\label{def:H}
For $r\ge0$ and $w\ge0$, $\Hyp_k(r,w)$ means: there is an event $E_k$, measurable with respect to
the couplings of a scale-$k$ sub-diamond, given by a fixed measurable function of its coupling
vector (so that it is defined on every copy), with $\Pp(E_k^c)\le w$ and
$\Pp(\xi_k\in dx,\,E_k)\le r\,dx$.
\end{definition}

Under (D), $\Hyp_0(\phi/\beta,0)$ holds with $E_0$ the sure event, and $\Hyp_k(r,w)$ implies
$\Hyp_k(r',w')$ for $r'\ge r$, $w'\ge w$.

\begin{lemma}\label{lem:Hcons}
If $\Hyp_k(r,w)$, then $\Pp(|\xi_k|<t)\le w+2rt$ and
$\sup_{y\ge0}\Pp(||\xi_k|-y|<t)\le w+4rt$ for all $t\ge0$.
\end{lemma}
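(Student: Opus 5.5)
The plan is to split each probability according to whether the event $E_k$ of $\Hyp_k(r,w)$ occurs, and to integrate the density bound over the relevant set. For the first bound, write
\[
\Pp(|\xi_k|<t)\le\Pp(E_k^c)+\Pp(|\xi_k|<t,\ E_k)\le w+\int_{-t}^{t}r\,dx=w+2rt,
\]
using $\Pp(E_k^c)\le w$ and the defining inequality $\Pp(\xi_k\in dx,\,E_k)\le r\,dx$, integrated over the interval $(-t,t)$ of length $2t$.

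For the concentration bound, fix $y\ge0$ and $t\ge0$. The event $\{||\xi_k|-y|<t\}$ equals $\{\xi_k\in I_+\cup I_-\}$ with $I_\pm:=\{x:|x\mp y|<t\}$, each an open interval of length $2t$. When $y<t$ the two intervals overlap, but the union still has Lebesgue measure at most $4t$. The same splitting then gives
\[
\Pp\big(||\xi_k|-y|<t\big)\le w+r\,\big|I_+\cup I_-\big|\le w+4rt.
\]
The right-hand side does not depend on $y$, so taking the supremum over $y\ge0$ gives the claim.

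The only point needing care is the measure-theoretic reading of the hypothesis. The defining inequality in Definition~\ref{def:H} says that the sub-probability measure $B\mapsto\Pp(\xi_k\in B,E_k)$ is dominated by $r$ times Lebesgue measure on Borel sets. Integrating it over an interval, or over a finite union of intervals, is therefore immediate.

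There is no real obstacle here; the lemma is a direct integration.
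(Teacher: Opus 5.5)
Your proof is correct and is the same argument as the paper's. The paper notes that $\Pp(\xi_k\in S)\le w+r|S|$ for every Borel set $S$, and that $\{|x|<t\}$ and $\{x:||x|-y|<t\}$ have Lebesgue measure at most $2t$ and $4t$; this is exactly your split on $E_k$ together with your description of the second set as the union of two intervals of length $2t$.
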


\begin{proof}
$\Pp(\xi_k\in S)\le w+r\,|S|$ for every Borel set $S$, and the sets $\{|x|<t\}$ and
$\{x:||x|-y|<t\}$ have Lebesgue measure at most $2t$ and $4t$.
\end{proof}

\subsection{The one-step contraction}\label{sec:propA}

In the next proposition a branch of a scale-$(k+1)$ sub-diamond is called \emph{good} if both
halves lie in their events, the larger modulus is at least $R$, and it exceeds the smaller by at
least $L$.

\begin{proposition}[One-step contraction]\label{prop:A}
Assume $\Hyp_k(r,w)$ with $w<1$. For $L,R>0$ put
\[
\rho:=\frac r{1-w},\quad \varpi:=2w+(2\rho R)^2+4\rho L,\quad \vartheta:=\frac{1+e^{-2L}}{\tanh R},\quad
\Gamma:=\vartheta\sum_{g=1}^{n}\binom ng\kappa_g\,\varpi^{\,n-g}.
\]
Then $\Hyp_{k+1}(r\Gamma,\,\varpi^n)$ holds.
\end{proposition}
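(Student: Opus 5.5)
The plan is to take as $E_{k+1}$ the event that at least one of the $n$ branches of the scale-$(k+1)$ sub-diamond is good. This event is a fixed measurable function of the couplings of that sub-diamond. The $n$ branches use disjoint couplings, and each branch is good or not as a function of its own two halves (Lemma~\ref{lem:struct}(c)). So the good/bad indicators are independent, and it suffices to show $\Pp(\text{branch bad})\le\varpi$; then $\Pp(E_{k+1}^c)\le\varpi^n$.

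A branch is bad if one of three things happens.
\begin{itemize}
\item[(a)] One of its halves lies outside $E_k$. This has probability at most $2w$.
\item[(b)] Both halves lie in $E_k$ but have modulus $<R$. By Lemma~\ref{lem:Hcons} each has probability at most $2rR$, giving at most $(2rR)^2\le(2\rho R)^2$.
\item[(c)] Both halves lie in $E_k$ and their moduli differ by less than $L$. Conditionally on one half, the other has $||\xi|-y|<L$ on $E_k$, which costs at most $4rL\le4\rho L$.
\end{itemize}
This gives $\varpi$.

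For the density bound I decompose $E_{k+1}$ according to the set $G$ of good branches, with $|G|=g\ge1$. The sum over the bad branches is independent of the good ones, so it only shifts the sum and I condition on it. For each good branch I also condition on which half is stronger and on its value $b$, with $|b|\ge R$. Given this, the weaker half $a$ has density at most $r$ on $E_k\cap\{|a|\le|b|-L\}$. The branch value $a\star b$ is a smooth monotone function of $a$, with
\[
\partial_a(a\star b)=\frac{\tanh b\,\sech^2a}{1-\tanh^2a\tanh^2b}.
\]
For $|a|\le|b|-L$ and $|b|\ge R$ I expect this derivative to be at least $1/\vartheta$; the factor $\tanh R$ and the factor $1+e^{-2L}$ come from comparing $\sech^2 a$ with $1-\tanh^2a\tanh^2b$ when the gap is at least $L$. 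Hence, with $W(b)$ the conditional probability of the allowed range, the good branch value has sub-density at most $\vartheta r$ and total mass $W(b)$. Normalising, it has conditional density at most $\vartheta r/W(b)$, i.e.\ the uniform width $W(b)/(\vartheta r)$.

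Rogozin's inequality (Lemma~\ref{lem:rogozin}), applied conditionally on all the larger halves, then bounds the density of the sum over $G$ on the good event by
\[
\prod_{i\in G}W(b_i)\,\Phi_g\big(W(b_1)/(\vartheta r),\dots\big)=\vartheta r\,\prod_i W_i\,\Phi_g(W_1,\dots,W_g).
\]

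The main obstacle is the averaging step. Averaging over the larger halves, with a factor $2$ for which half is larger, I must show that this expectation is at most $\kappa_g=\E\Phi_g(V_1,\dots,V_g)$ up to the normalisation $\rho=r/(1-w)$. The point is that under the conditioning on $E_k$ the weight $2\times$(density of the larger) $\times W$ makes $W$ behave like the maximum of two uniforms, with density $2v$; the cut $L$ only shrinks $W$. I would prove this by a layer-cake/rearrangement argument using that $\Phi_g$ is nonincreasing in each width (Lemma~\ref{lem:rogozin}) and homogeneous of degree $-1$. This is the step where I would spend effort, first in the case $g=1$, where $\E[2W\cdot W^{-1}]$-type cancellation gives $\kappa_1=2$.

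The remaining factor $\Pp(\text{others bad})\le\varpi^{n-g}$ comes from independence. Summing over $G$ gives the density bound $r\vartheta\sum_g\binom ng\kappa_g\varpi^{n-g}=r\Gamma$, which together with $\Pp(E_{k+1}^c)\le\varpi^n$ is $\Hyp_{k+1}(r\Gamma,\varpi^n)$.
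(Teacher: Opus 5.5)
Your proposal is correct and follows the paper's proof step for step: the same event $E_{k+1}$ and failure bound, conditioning on the label and value of the larger half, the lower bound $1/\vartheta$ on the derivative of the series map, Rogozin's inequality applied conditionally, and the cancellation of the range probability $W_i$ against the density factor $1/W_i$.

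The averaging step you flag closes at once with the tools you name. Monotonicity and homogeneity of $\Phi_g$ give $\lambda v_1\Phi_g(\lambda v_1,v_2,\dots,v_g)\ge\lambda v_1\Phi_g(\lambda v)=v_1\Phi_g(v)$ for $\lambda\ge1$, so $\prod_iW_i\,\Phi_g(W)$ is nondecreasing in each $W_i$. You may therefore replace $W_i$ by $F(|y_i|)$, where $F(t)=\Pp(|\xi_k|<t\mid E_k)$. This variable is uniform under the conditional law, and the weight $2F$ turns it into the density $2v$, which gives $\kappa_g$ (times $\Pp(E_k)^g\le1$) by Lemma~\ref{lem:kappa}. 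The paper reaches the same bound by Tonelli instead, bounding the per-branch kernel by $\Pp(E_k)^2p_2(x)$ and using $\kappa_g=p_2^{*g}(0)$.
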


Here $\varpi$ bounds the probability that a branch is not good, and $\vartheta\ge1$ bounds the factor by which the series map can increase the density on a good branch at positive temperature; $\vartheta\to1$ as
$L,R\to\infty$.

\begin{proof}
\emph{Setup.} Let $p_E:=\Pp(E_k)\ge1-w$ and $\nu:=$ the law of $\xi_k$ given $E_k$. Then $\nu$ has
density at most $r/p_E\le\rho$. Let $F(t):=\nu(|\xi|<t)$; $F$ is continuous, and under $\nu$ the
variable $F(|\xi|)$ is uniform on $[0,1]$. Branch $i$ of the scale-$(k+1)$ sub-diamond has halves
with couplings and events $(a_i,E^{a_i})$, $(b_i,E^{b_i})$; the $2n$ pairs are independent and
distributed as $(\xi_k,\1_{E_k})$. Let $\mathcal T_i:=E^{a_i}\cap E^{b_i}$, so $\Pp(\mathcal T_i)=p_E^2$
and $(a_i,b_i)\sim\nu\otimes\nu$ given $\mathcal T_i$. Let
\begin{gather*}
\mathcal G_i:=\{\max(|a_i|,|b_i|)\ge R\}\cap\{||a_i|-|b_i||\ge L\},\qquad
\mathcal B_i:=\mathcal T_i\cap\mathcal G_i,\\
I:=\{i:\mathcal B_i\text{ occurs}\},\qquad E_{k+1}:=\{I\ne\emptyset\}.
\end{gather*}
Branch $i$ is \emph{good} on $\mathcal B_i$. The event $E_{k+1}$ is measurable with respect to the
couplings of the scale-$(k+1)$ sub-diamond.

\emph{(1) Failure.} $\Pp(\mathcal B_i^c)\le(1-p_E^2)+\nu^{\otimes2}(\max(|a|,|b|)<R)+\nu^{\otimes2}(||a|-|b||<L)
\le2w+F(R)^2+4\rho L\le\nobreak\varpi$, using $F(R)\le2\rho R$ and, conditionally on $b$,
$\nu(||a|-|b||<L)\le4\rho L$. By independence, $\Pp(E_{k+1}^c)\le \varpi^n$.

\emph{(2) Conditional densities.} On $\mathcal G_i$ exactly one half is larger in modulus by at least
$L$; call its modulus $c_i\ge R$ and the other half $z_i$. For $|y|\ge R$,
\[
\Pp(\mathcal T_i,\ b_i\in dy,\ a_i\in dx',\ |x'|\le|y|-L)=p_E^2\,\nu(dy)\,\nu(dx')\,\1\{|x'|\le|y|-L\},
\]
so given $\mathcal T_i$, $\{b_i$ larger$\}$ and $b_i=y$, the law of $z_i=a_i$ is $\nu$ restricted to
$[-(|y|-L),|y|-L]$ and normalized by $W_i:=F(c_i-L)$, which is positive almost surely on
$\mathcal B_i$; its density is at most $(r/p_E)/W_i$. The same holds with $a,b$ exchanged. Now
$Y_i:=a_i\star b_i=\sgn(y)\,\chi_{c_i}(z_i)$ with $\chi_c(x):=\artanh(\tanh x\tanh c)$, which is odd,
increasing, and satisfies
\[
\chi_c'(x)=\frac{\tanh c}{1+\sinh^2x/\cosh^2c}\ge\frac{\tanh R}{1+e^{-2L}}=\frac1\vartheta
\qquad(|x|\le c-L,\ c\ge R),
\]
since $\sinh(c-L)/\cosh c\le e^{-L}$. Hence, given $\mathcal T_i$ and the label and value of the
larger half, $Y_i$ has density at most $\vartheta(r/p_E)/W_i$.
Let $\mathcal S$ be the $\sigma$-algebra generated by, for each branch, the indicators of $E^{a_i}$,
$E^{b_i}$, $\mathcal B_i$, on $\mathcal B_i$ the label and the value of the larger half, and off
$\mathcal B_i$ the pair $(a_i,b_i)$. Given $\mathcal S$ the $Y_i$ are independent, because the branches
are independent and $\mathcal S$ is generated by functions of single branches; on $\{I=I_0\}$ with
$|I_0|=g\ge1$, the $Y_i$ with $i\notin I_0$ are $\mathcal S$-measurable and each $Y_i$ with $i\in I_0$
has conditional density at most $\vartheta(r/p_E)/W_i$. By Lemma~\ref{lem:rogozin}, applied to the regular
conditional law, and scaling,
\[
\Pp(\xi_{k+1}\in dx\mid\mathcal S)\le\vartheta\,\frac r{p_E}\,\Phi_g\big((W_i)_{i\in I_0}\big)\,dx
\qquad\text{on }\{I=I_0\}.
\]

\emph{(3) Averaging: the cancellation.} Since $\Phi_g(W)=(h_{W_1}*\dots*h_{W_g})(0)$ and the pairs
$(W_i,\1_{\mathcal B_i})$ are independent across branches, Tonelli gives
$\E[\Phi_g((W_i)_{i\in I_0})\prod_{i\in I_0}\1_{\mathcal B_i}]=(m^{*g})(0)$, where
\begin{align*}
m(x):=\E[h_{W}(x)\1_{\mathcal B}]
&=2p_E^2\!\int\!\nu(dy)\,\1\{|y|\ge R\}\,F(|y|-L)\cdot\frac{\1\{|x|\le F(|y|-L)/2\}}{F(|y|-L)}\\
&=2p_E^2\,\nu\big(|y|\ge R,\ F(|y|-L)>0,\ F(|y|-L)\ge2|x|\big),
\end{align*}
with the convention that the integrand vanishes where $F(|y|-L)=0$.
The factor $F(|y|-L)$, the probability of the range of the smaller half, cancels the density factor
$1/W$. Since $F(|y|-L)\le F(|y|)$ and $F(|y|)$ is uniform under $\nu$,
$m(x)\le2p_E^2\,\nu(F(|y|)\ge2|x|)=p_E^2p_2(x)$, and therefore $(m^{*g})(0)\le p_E^{2g}\kappa_g$.

\emph{(4) Collecting.} Since $E_{k+1}$ and each $\{I=I_0\}$ are $\mathcal S$-measurable, (2), (3),
(1) and independence of the branches give
\begin{align*}
\Pp(\xi_{k+1}\in dx,\,E_{k+1})
&=\sum_{\emptyset\ne I_0\subset\{1,\dots,n\}}\E\big[\1\{I=I_0\}\,\Pp(\xi_{k+1}\in dx\mid\mathcal S)\big]\\
&\le\sum_{I_0\ne\emptyset}\vartheta\,\frac r{p_E}\,\E\Big[\Phi_{|I_0|}\big((W_i)_{i\in I_0}\big)\prod_{i\in I_0}\1_{\mathcal B_i}\Big]
\prod_{j\notin I_0}\Pp(\mathcal B_j^c)\,dx\\
&\le\sum_{g=1}^n\binom ng\,\vartheta\,\frac r{p_E}\,p_E^{2g}\kappa_g\,\varpi^{\,n-g}\,dx
\le r\,\vartheta\sum_{g=1}^n\binom ng\kappa_g\,\varpi^{\,n-g}\,dx=r\Gamma\,dx,
\end{align*}
because $p_E^{2g-1}\le1$ for $g\ge1$. With (1) this is $\Hyp_{k+1}(r\Gamma,\varpi^n)$.
\end{proof}

\begin{remark}[Sharpness and the case $n=3$]\label{rem:sharp}
At zero temperature the recursion becomes $Y=\sgn(a)\sgn(b)\min(|a|,|b|)$. If $a,b$ are uniform on
$[-\frac12,\frac12]$, then $Y$ has density exactly $p_2$, so for $n=3$ the sum of three copies has
density $\kappa_3=11/10$ at $0$, while the input density is $1$. Hence for $n=3$ no one-step bound of
the form $r\mapsto c\,r$ with $c<1$ holds at zero temperature, and, by scaling, the same
obstruction is expected at finite $\beta$ as $r\to0$. For $n\ge4$, $\Gamma\to\vartheta\kappa_n$ as
$r,w\to0$ with $L,R$ fixed, and $\vartheta\kappa_n<1$ once $L,R$ are large.
\end{remark}

\begin{remark}[The role of the event]\label{rem:event}
The series map can send bounded densities to unbounded ones: if $X,Y$ are independent and uniform
on $[-a,a]$ and $[-b,b]$, the formula in the proof of Lemma~\ref{lem:U} (Appendix~\ref{app:proofs})
shows that the density of $|X\star Y|$ behaves like $\log(1/z)/(ab)$ as $z\downarrow0$. Moreover, the
conditioning argument of Proposition~\ref{prop:A} cannot be run without a cut and an event at
positive temperature: if $\xi$ has density
$f$ and $F(c)=\Pp(|\xi|<c)$, then given the larger half $c$ the density of $\chi_c(z)$ at $0$ is
$f(0)/(F(c)\tanh c)$, and its average over $c$, with weight $2f_{|\xi|}(c)F(c)$, equals
$2f(0)\int_0^\infty f_{|\xi|}(c)/\tanh c\,dc=\infty$ when $f$ is continuous at $0$ and $f(0)>0$
(for instance for Gaussian couplings). The cut at $R$ and the event $E_k$ in
Proposition~\ref{prop:A} remove this divergence; the density bound of Proposition~\ref{prop:A} is a
bound for the sum over the $n$ branches, restricted to $E_{k+1}$.
\end{remark}

\subsection{Iteration}\label{sec:iteration}

The tail keeps the failure bound quadratic in the density bound, $w_k\le Ar_k^2$: then
$\varpi=O(r_k)$, so $\Gamma$ stays close to $\vartheta\kappa_n$, and $\varpi^n=O(r_k^n)$ reproduces
the quadratic bound at the next level.

\begin{lemma}[Tail]\label{lem:T}
Suppose $\Hyp_K(r_K,w_K)$ holds and there are $A,\gamma,L,R>0$ with
\textup{(T0)} $w_K\le Ar_K^2$ and $Ar_K^2<1$, \textup{(T1)} $\Gamma(r_K,Ar_K^2)\le\gamma<1$, and
\textup{(T2)} $\varpi(r_K,Ar_K^2)^n\le A\gamma^2r_K^2$, where $\Gamma(r,w)$ and $\varpi(r,w)$ are the quantities
of Proposition~\ref{prop:A} at these $L,R$. Then
$\Hyp_k(r_K\gamma^{k-K},\,Ar_K^2\gamma^{2(k-K)})$ for all $k\ge K$.
\end{lemma}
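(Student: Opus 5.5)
We argue by induction on $k\ge K$, with inductive hypothesis $\Hyp_k(r_k,Ar_k^2)$ where $r_k:=r_K\gamma^{k-K}$. The base case $k=K$ is the assumption $\Hyp_K(r_K,w_K)$ combined with (T0): $\Hyp$ is monotone in $(r,w)$, so $w_K\le Ar_K^2$ gives $\Hyp_K(r_K,Ar_K^2)$.

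For the inductive step, assume $\Hyp_k(r_k,Ar_k^2)$. Since $r_k\le r_K$, we have $Ar_k^2\le Ar_K^2<1$, so Proposition~\ref{prop:A} applies at the same cuts $L,R$. It yields $\Hyp_{k+1}\big(r_k\Gamma(r_k,Ar_k^2),\,\varpi(r_k,Ar_k^2)^n\big)$. It therefore suffices to check two inequalities:
\[
\Gamma(r_k,Ar_k^2)\le\gamma,\qquad \varpi(r_k,Ar_k^2)^n\le A\gamma^2r_k^2 .
\]
These give $r_{k+1}=\gamma r_k$ and $w_{k+1}\le Ar_{k+1}^2$, again by monotonicity of $\Hyp$.

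Both inequalities come from monotonicity in $r$ along the curve $w=Ar^2$. With $L,R$ fixed, $\rho=r/(1-w)$ is nondecreasing in $r$ when $w=Ar^2$ is nondecreasing and stays below $1$. Hence $\varpi(r,Ar^2)=2Ar^2+(2\rho R)^2+4\rho L$ is nondecreasing in $r$. The factor $\vartheta$ does not depend on $r$, and $\Gamma=\vartheta\sum_g\binom ng\kappa_g\varpi^{n-g}$ is nondecreasing in $\varpi$ because every coefficient is nonnegative. Therefore (T1) at $r_K$ gives $\Gamma(r_k,Ar_k^2)\le\Gamma(r_K,Ar_K^2)\le\gamma$ for all $r_k\le r_K$.

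The second inequality is the main point and needs the right scaling. Write $\varpi(r,Ar^2)=r\,\eta(r)$ with
\[
\eta(r)=2Ar+\frac{4R^2r}{(1-Ar^2)^2}+\frac{4L}{1-Ar^2},
\]
which is nondecreasing in $r$ on $\{Ar^2<1\}$. Then
\[
\frac{\varpi(r,Ar^2)^n}{r^2}=r^{n-2}\eta(r)^n,
\]
and this is nondecreasing in $r$ because $n\ge2$ (in fact $n\ge4$ in this section). So the ratio at $r_k\le r_K$ is at most its value at $r_K$, which is at most $A\gamma^2$ by (T2). This gives $\varpi(r_k,Ar_k^2)^n\le A\gamma^2r_k^2$ and closes the induction.

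The only delicate points are the exponent $n-2\ge0$ and the need to keep $Ar^2<1$ so that $\rho$ is finite and monotone. Both hold here.
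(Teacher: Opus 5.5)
Your proof is correct and is essentially the paper's own argument. You run induction with $r_k=r_K\gamma^{k-K}$, get the base case from (T0) and monotonicity of $\Hyp$, and get the step from Proposition~\ref{prop:A}, using that $\varpi(r,Ar^2)/r$ is nondecreasing, that $\Gamma$ is a polynomial in $\varpi$ with nonnegative coefficients, and that $\varpi(r,Ar^2)^n/r^2=r^{n-2}(\varpi/r)^n$ is nondecreasing for $n\ge2$. Your explicit check that $Ar_k^2\le Ar_K^2<1$, so that Proposition~\ref{prop:A} applies at every level, is a detail the paper leaves implicit.
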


\begin{proof}
For $0<r\le r_K$, $\varpi(r,Ar^2)/r=2Ar+4R^2r/(1-Ar^2)^2+4L/(1-Ar^2)$ is nondecreasing, so $\varpi(r,Ar^2)$ and
$\Gamma(r,Ar^2)$, a polynomial in $\varpi$ with nonnegative coefficients, are nondecreasing, and
$\varpi(r,Ar^2)^n/r^2=r^{n-2}(\varpi/r)^n$ is nondecreasing for $n\ge2$. Induction with $r_k:=r_K\gamma^{k-K}$:
from $\Hyp_k(r_k,Ar_k^2)$, Proposition~\ref{prop:A} gives
$\Hyp_{k+1}(r_k\Gamma(r_k,Ar_k^2),\varpi(r_k,Ar_k^2)^n)$, and $\Gamma(r_k,Ar_k^2)\le\gamma$,
$\varpi(r_k,Ar_k^2)^n\le r_k^2A\gamma^2=Ar_{k+1}^2$. The case $k=K$ uses monotonicity of $\Hyp$.
\end{proof}

\begin{lemma}[Existence]\label{lem:E}
Let $n\ge4$ and assume \textup{(D)}. There are $r^*>0$, $\gamma\in(0,1)$ and $L,R>0$
such that for every $r_0\in(0,r^*]$ the numerical conditions
\textup{(T0)--(T2)} of Lemma~\ref{lem:T} hold at $K=0$ with $w_0=0$ and $A=1$.
Consequently, $\Hyp_k(r_0\gamma^k,r_0^2\gamma^{2k})$ for all $k\ge0$
whenever $\phi/\beta\le r_0\le r^*$.
\end{lemma}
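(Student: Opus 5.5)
Fix $n\ge4$. Since $\kappa_n\le\kappa_4=302/315<1$ (Lemma~\ref{lem:kappa}), choose $\gamma\in(\kappa_n,1)$, say $\gamma:=(1+\kappa_n)/2$, and then $L,R$ large enough that $\vartheta=(1+e^{-2L})/\tanh R$ satisfies $\vartheta\kappa_n<\gamma$, say $\vartheta\kappa_n\le(\gamma+\kappa_n)/2$. With $L,R,\gamma$ fixed, the quantities of Proposition~\ref{prop:A} at $(r,w)=(r,r^2)$ (the choice $A=1$) are explicit. We have $\rho=r/(1-r^2)$, so
\[
\varpi(r,r^2)=2r^2+4R^2\rho^2+4L\rho=r\Big(2r+\frac{4R^2r}{(1-r^2)^2}+\frac{4L}{1-r^2}\Big)=:r\,c(r),
\]
where $c(r)\to4L$ as $r\downarrow0$, and $c$ is nondecreasing on $(0,1)$.

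We then check (T0)--(T2) at $K=0$ with $w_0=0$, $A=1$, for all $r_0\in(0,r^*]$. (T0) requires $0\le r_0^2$ and $r_0^2<1$, which holds as soon as $r^*<1$. For (T1),
\[
\Gamma(r,r^2)=\vartheta\kappa_n+\vartheta\sum_{g<n}\binom ng\kappa_g\varpi^{n-g}\le\vartheta\kappa_n+\vartheta\,2^n\kappa_1\,\varpi\,(1+\varpi)^{n}
\]
when $\varpi\le1$. This tends to $\vartheta\kappa_n<\gamma$ as $r\to0$, and it is nondecreasing in $r$ because $\varpi$ is. So some $r_1>0$ gives $\Gamma\le\gamma$ on $(0,r_1]$. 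For (T2) we need $\varpi^n\le\gamma^2r^2$, that is $r^{n-2}c(r)^n\le\gamma^2$. Since $n\ge3$, the left side tends to $0$ and is nondecreasing, so it holds on $(0,r_2]$. Take $r^*:=\min(r_1,r_2,1/2)$. The monotonicity in $r$ makes the choice of $r^*$ uniform in $r_0$, which is the only point requiring care; no step is a real obstacle, since $\kappa_n<1$ does all the work.

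For the consequence, under (D) we have $\Hyp_0(\phi/\beta,0)$, hence $\Hyp_0(r_0,0)$ for any $r_0\ge\phi/\beta$ by monotonicity of $\Hyp$. Lemma~\ref{lem:T} with $K=0$, $A=1$ then gives $\Hyp_k(r_0\gamma^k,r_0^2\gamma^{2k})$ for all $k$. The constants $r^*,\gamma,L,R$ depend only on $n$, not on $\phi$, which enters only through the requirement $\phi/\beta\le r^*$.
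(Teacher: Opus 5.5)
Your proof is correct and follows essentially the paper's argument: fix $L,R$ so that $\vartheta\kappa_n<\gamma<1$, and use $\Gamma(r,r^2)\to\vartheta\kappa_n$ and $\varpi(r,r^2)=O(r)$ as $r\downarrow0$ to get (T1) and (T2) on an interval $(0,r^*]$ (the latter uses $n\ge3$). Then conclude with monotonicity of $\Hyp$ and Lemma~\ref{lem:T}. The only differences are cosmetic: you fix $\gamma$ before $L,R$, and you bound $\varpi/r$ by the nondecreasing function $c(r)$ instead of the constant $C=\varpi(r_1,r_1^2)/r_1$.
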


\begin{proof}
By Lemma~\ref{lem:kappa}, $\kappa_n\le\kappa_4=302/315<1$. Fix $L,R>0$ with $\vartheta\kappa_n<1$
(possible since $\vartheta\to1$ as $L,R\to\infty$) and put $\gamma:=(1+\vartheta\kappa_n)/2$. As $r\downarrow0$,
$\varpi(r,r^2)\to0$ and $\Gamma(r,r^2)\to\vartheta\kappa_n<\gamma$. Choose $r_1\in(0,1)$ with
$\Gamma(r,r^2)\le\gamma$ for $r\le r_1$ (by the monotonicity in the proof of Lemma~\ref{lem:T}), and let
$C:=\varpi(r_1,r_1^2)/r_1$. For $r\le r_1$, $\varpi(r,r^2)^n\le C^nr^n\le\gamma^2r^2$ once
$r^{n-2}\le\gamma^2/C^n$ (here $n\ge3$). Put $r^*:=\min\{r_1,(\gamma^2/C^n)^{1/(n-2)}\}<1$. For
$r_0\le r^*$, (T0) holds since $w_0=0\le r_0^2<1$, and (T1), (T2) hold. If moreover
$\phi/\beta\le r_0$, then $\Hyp_0(\phi/\beta,0)$ gives $\Hyp_0(r_0,0)$ by monotonicity of $\Hyp$,
and Lemma~\ref{lem:T} gives the conclusion.
\end{proof}

\begin{definition}\label{def:certA}
A \emph{certificate at $\beta$} consists of an integer $K\ge0$, numbers $r_0\ge\phi/\beta$ and
$w_0=0$, parameters $(L_k,R_k)_{0\le k<K}$ and numbers $(r_k,w_k)_{1\le k\le K}$ with $w_k<1$,
$r_{k+1}\ge r_k\Gamma(r_k,w_k;L_k,R_k)$ and $w_{k+1}\ge\varpi(r_k,w_k;L_k,R_k)^n$ for $k<K$, and tail
parameters $(A,\gamma,L,R)$ satisfying (T0)--(T2) at $K$. Its sequence $(r_k,w_k)_{k\ge0}$ is
continued by $r_k:=r_K\gamma^{k-K}$ and $w_k:=Ar_K^2\gamma^{2(k-K)}$ for $k\ge K$; by (T0) this can
only increase $w_K$, and for $K=0$ it gives $w_0=Ar_0^2$. By Proposition~\ref{prop:A},
monotonicity of $\Hyp$, and Lemma~\ref{lem:T}, a certificate gives $\Hyp_k(r_k,w_k)$ for every $k$.
Since $\Hyp_0(\phi/\beta,0)$ implies $\Hyp_0(r_0,0)$ whenever $r_0\ge\phi/\beta$, a certificate at
$\beta_1$ is a certificate at every $\beta\ge\beta_1$.
\end{definition}

\subsection{Constants and the proof of Theorem~\ref*{thm:A}}\label{sec:proofA}

\begin{lemma}[Rates under $\Hyp$]\label{lem:ratesH}
If $\Hyp_{k}(r_k,w_k)$ holds for all $k$, then $F_k(t):=w_k+2r_kt$, $Q_k(t):=w_k+4r_kt$ is a
small-ball profile, and
\[
\as_k\le\tfrac12w_{k-1}+2r_{k-1},\qquad \ab_k\le\bar a^{\rm b}(r_{k-1},w_{k-1}),\qquad
S_N\le w_N+2r_N,\qquad P_N\le w_N+2\ln2\,r_N,
\]
with $\bar a^{\rm b}$ as in \eqref{eq:abarb}.
\end{lemma}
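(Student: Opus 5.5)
The plan is to take the profile from Lemma~\ref{lem:Hcons}, drop every $\min\{1,\cdot\}$ in Definition~\ref{def:profile} (this only enlarges the integrals), and evaluate the resulting integrals of affine or quadratic functions of $s$ exactly against the fixed kernels.

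\emph{The profile.} By Lemma~\ref{lem:Hcons}, $\Hyp_k(r_k,w_k)$ gives $\Pp(|\xi_k|<t)\le w_k+2r_kt$ and $\sup_y\Pp(||\xi_k|-y|<t)\le w_k+4r_kt$. These functions are nondecreasing in $t$, so $(F_k,Q_k)$ is a small-ball profile. Each kernel ($\sech^2s\tanh s$, $-\psi'$, $4e^{-4s}$, $\sech^2t$) is nonnegative, because $\psi$ is decreasing. Hence replacing $\min\{1,F\}$ by $F$ and $\min\{1,Q\}$ by $Q$ only increases each integral. For the squared term I use $\min\{1,F\}^2\le F^2$.

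\emph{The spin rate and $S_N$.} I need two integrals. First, $\int_0^\infty\sech^2s\tanh s\,ds=\tfrac12$. Second, integrating by parts against $-\tfrac12\sech^2s$, one gets $\int_0^\infty s\sech^2s\tanh s\,ds=\tfrac12\int_0^\infty\sech^2s\,ds=\tfrac12$. Substituting $Q_{k-1}$ gives $\as_k\le\tfrac12w_{k-1}+4r_{k-1}\cdot\tfrac12$, and substituting $F_N$ into $S_N$ gives $w_N+2r_N$. For $P_N$ I use $\int_0^\infty\sech^2t\,dt=1$ and $\int_0^\infty t\sech^2t\,dt=\ln2$. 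The latter follows by parts: it equals $\int_0^\infty(1-\tanh t)\,dt=\lim_{T\to\infty}\bigl(T-\log\cosh T\bigr)=\ln2$. This yields $P_N\le w_N+2\ln2\,r_N$.

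\emph{The blue rate.} This is the only computation with any content, and it is where I expect the main obstacle: the constant must come out exactly as in \eqref{eq:abarb}. For the first integral, write $\int_0^\infty(-\psi')(w+4rs)\,ds=w\,\psi(0)+4r\int_0^\infty\psi$. This uses the boundary terms $\psi(\infty)=0$ and $s\psi(s)\to0$. Here $\psi(0)=1-\tfrac14=\tfrac34$.

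Since $\psi=2p-p^2$, I need $\int_0^\infty p$ and $\int_0^\infty p^2$. With $u=e^{2s}$,
\[
\int_0^\infty p=\tfrac12\ln2,\qquad
\int_0^\infty p^2=\tfrac12\int_1^\infty\frac{du}{u(1+u)^2}=\tfrac12\bigl(\ln2-\tfrac12\bigr).
\]
Therefore $\int_0^\infty\psi=\tfrac12\ln2+\tfrac14$, and the first integral equals $\tfrac34w+(1+2\ln2)r$.

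For the second integral, expand $(w+2rs)^2$ and use $\int_0^\infty4e^{-4s}s^m\,ds=m!/4^m$. This gives $w^2+wr+\tfrac12r^2$. Adding the two parts, with $(r,w)=(r_{k-1},w_{k-1})$, gives exactly $\bar a^{\rm b}(r_{k-1},w_{k-1})$.
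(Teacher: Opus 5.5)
Your proposal is correct and follows the paper's own proof. You take the profile from Lemma~\ref{lem:Hcons}, drop the truncations $\min\{1,\cdot\}$ against nonnegative kernels, and evaluate the same elementary integrals that the paper merely lists; you also supply their derivations, for instance $\int_0^\infty\psi=\tfrac12\ln2+\tfrac14$ via $\psi=2p-p^2$.
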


The proof combines Lemma~\ref{lem:Hcons} with elementary integrals (Appendix~\ref{app:proofs}).

For a certificate put
\begin{equation}\label{eq:consts}
\begin{aligned}
\ell_{\rm s}&:=\sum_{k\ge0}\big(\tfrac12w_k+2r_k\big),&\qquad
\ell_{\rm b}&:=\sum_{k\ge0}\bar a^{\rm b}(r_k,w_k),\\
S&:=\sup_{N\ge0}(w_N+2r_N),&\qquad P&:=\sup_{N\ge0}(w_N+2\ln2\,r_N).
\end{aligned}
\end{equation}
The tails are geometric, so these are finite and are evaluated in closed form beyond $K$.

\begin{proof}[Proof of Theorem~\ref{thm:A}]
Let $r^*,\gamma,L,R$ be as in Lemma~\ref{lem:E}. For $\beta\ge\phi/r^*$ the sequence $r_k=r_0\gamma^k$, $w_k=r_0^2\gamma^{2k}$ with $r_0=\phi/\beta$ is the sequence of a certificate with $K=0$ and tail parameters $(1,\gamma,L,R)$, and \eqref{eq:consts} gives
\[
\ell_{\rm s}=\frac{2r_0}{1-\gamma}+\frac{r_0^2}{2(1-\gamma^2)},\quad S=2r_0+r_0^2,\quad P=2\ln2\,r_0+r_0^2,
\]
and $\ell_{\rm b}=(1+2\ln2)r_0/(1-\gamma)+O(r_0^2)$. All four are increasing functions of
$r_0=\phi/\beta$ and are $O(1/\beta)$. Choose $\beta_0\ge\phi/r^*$ such that $2\ell_{\rm s}+S<1$ and
$2\ell_{\rm b}+\sqrt P<1$ at $\beta=\beta_0$; then both hold for every $\beta\ge\beta_0$, since both
left sides are increasing in $r_0$.

Now let any certificate be given. Lemma~\ref{lem:ratesH} gives a small-ball profile with
$\Ts_{N,j}\le\ell_{\rm s}$, $\Tb_{N,j}\le\ell_{\rm b}$, $S_N\le S$, $P_N\le P$, and
Theorem~\ref{thm:transfer} gives (i)--(vi): (i) from item 1, with $S_N\le w_N+2r_N\to0$;
(ii) from item 2; (iii) from item 3, since every summand is at least $1-2\ell_{\rm s}-S$; (iv) from
item 4 with $\bar X_N\le\ell_{\rm b}$; (v) from item 5 and Lemma~\ref{lem:ratesH}, and
$\epsilon_{N,m}\to0$ because $r_k,w_k\to0$; (vi) from item 6. In (iv) and (vi), if
$2\ell_{\rm b}+\sqrt P<1$ (as for the certificate above), then $\ell_{\rm b}<\frac12$,
$(1-2\ell_{\rm b})_+^2-P>0$ and $1-P>4\ell_{\rm b}(1-\ell_{\rm b})>\ell_{\rm b}$ (if
$\ell_{\rm b}>0$), so the interval $(\ell_{\rm b}/(1-P),1)$ is nonempty.
\end{proof}

Certificates for standard Gaussian couplings (Tables~\ref{tab:thresholds} and~\ref{tab:oneshot}) and
their verification are described in Appendix~\ref{sec:gauss-certs}.

\section{Coupling growth near the critical temperature}\label{sec:partII}

In this section the couplings are standard Gaussian, so $\xi_0=\beta J_e$ is symmetric with a
nonincreasing density on $(0,\infty)$. The argument has two parts. On symmetric unimodal laws the
exact one-level map on the laws of $|\xi_k|$ preserves symmetric unimodality and is monotone in
the peakedness order (Sections~\ref{sec:su}--\ref{sec:lemM}). Hence a finite chain of explicit
laws, each more peaked than the image of its predecessor, bounds $\Pp(|\xi_k|\le t)$ for finitely
many levels (the bridge, Section~\ref{sec:bridge}); a scale-covariant family of laws, invariant
under the map in the peakedness order with the scale growing by a factor $\lambda>1$ per level,
takes over from there (Section~\ref{sec:family}). Both reduce to finitely many inequalities
between explicit laws, checked by computer (Section~\ref{sec:computation});
Theorem~\ref{thm:growth} combines them into bounds at every level, and Section~\ref{sec:proofB}
feeds these to Theorem~\ref{thm:transfer}.

\subsection{Symmetric unimodal laws and peakedness}\label{sec:su}

A real random variable $X$ is \emph{symmetric unimodal} (s.u.) if $X\overset d=-X$ and the law of
$|X|$ is an atom at $0$ plus a nonincreasing density on $(0,\infty)$; equivalently,
$t\mapsto\Pp(|X|\le t)$ is concave on $[0,\infty)$. A law $\nu$ on $[0,\infty)$ is an \emph{s.u.\
magnitude law} if its distribution function $F_\nu$ is concave on $[0,\infty)$; its signed version
$eY$ ($Y\sim\nu$, $e$ an independent fair sign) is s.u. By Khintchine's theorem
\cite{Khintchine1938,DJ1988}, $X$ is s.u.\ iff $X\overset d=U\Theta$ with $U$ uniform on
$[-1,1]$ and independent of $\Theta\ge0$. By Wintner's theorem \cite{Wintner1938,DJ1988}, sums of independent s.u.\
variables are s.u.; with Khintchine's representation this reduces to the fact that the convolution
of two centred uniform laws is a symmetric trapezoid.

\emph{Peakedness order.} For laws on $[0,\infty)$ write $\nu\lpk\mu$ (``$\nu$ is more peaked than
$\mu$'') if $F_\nu(t)\ge F_\mu(t)$ for all $t\ge0$, and $|X|\gpk\nu$ if $\Pp(|X|\le t)\le F_\nu(t)$
for all $t$. For s.u.\ magnitude laws define
\[
T_\star(\nu):=\operatorname{law}(Y_1\star Y_2),\quad \minT(\nu):=\operatorname{law}\min(Y_1,Y_2)\quad
(Y_1,Y_2\text{ iid }\nu),\qquad
\nu_1\oplus\nu_2:=\operatorname{law}|e_1X_1+e_2X_2|,
\]
with $X_i\sim\nu_i$ and fair signs $e_i$, all independent, and $R(\nu):=T_\star(\nu)^{\oplus n}$. If
$\xi_k$ is s.u., then $\xi^{(1)}\star\xi^{(2)}=e_1e_2(|\xi^{(1)}|\star|\xi^{(2)}|)$ with $e_1e_2$ a fair
sign independent of the moduli, and \eqref{eq:intro-rec} gives
$\operatorname{law}|\xi_{k+1}|=R(\operatorname{law}|\xi_k|)$.

\subsection{Unimodality is preserved}\label{sec:lemU}

\begin{lemma}\label{lem:U}
If $K_1,K_2$ are independent and s.u., then $K_1\star K_2$ is s.u. Hence every $\xi_k$ is s.u., and
$T_\star$, $\minT$, $\oplus$ and $R$ map s.u.\ magnitude laws to s.u.\ magnitude laws.
\end{lemma}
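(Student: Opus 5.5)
We prove that $K_1\star K_2$ is s.u.; the rest then follows. Symmetry is immediate: $\star$ is odd in each argument, so flipping the sign of $K_1$ flips the sign of $K_1\star K_2$ without changing its law. For unimodality we reduce, via Khintchine's representation, to uniform inputs. Write $K_i=U_i\Theta_i$ with $U_i$ uniform on $[-1,1]$, independent of $\Theta_i\ge0$, and all four variables independent. Conditionally on $(\Theta_1,\Theta_2)=(a,b)$, the variable $K_1\star K_2$ is $X\star Y$ with $X,Y$ uniform on $[-a,a]$ and $[-b,b]$. A mixture of s.u.\ laws is s.u., because concavity of $t\mapsto\Pp(|\cdot|\le t)$ is preserved under averaging. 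So it suffices to show that $|X\star Y|$ has a concave distribution function on $[0,\infty)$ for fixed $a,b>0$. The degenerate cases $a=0$ or $b=0$ give an atom at $0$, which is trivially s.u.

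For fixed $a,b$ we use Lemma~\ref{lem:series}(E2). Since $|X\star Y|=|X|\star|Y|$, with $|X|$ uniform on $[0,a]$ and $|Y|$ uniform on $[0,b]$, we get for $u\ge0$
\[
G(u):=\Pp(|X|\star|Y|\le u)=\Pp(|Y|\le u)+\E\big[\1\{|Y|>u\}\,F_a(\iota(u,|Y|))\big],
\]
where $F_a(s)=\min(s/a,1)$. We will differentiate this in $u$ and show that the density $g(u)=G'(u)$ is nonincreasing on $(0,\infty)$; we expect no atom at $0$ except in the degenerate cases.

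The cleanest route is to compute the density directly. Since $\tanh(|X|\star|Y|)=\tanh|X|\tanh|Y|$, the density is
\[
g(u)=\frac{1}{ab}\int\!\!\int\delta\big(\artanh(\tanh x\tanh y)-u\big)\,dx\,dy
\]
over $[0,a]\times[0,b]$. Substituting $s=\tanh x$ and $v=\tanh y$, so that $dx=ds/(1-s^2)$, and setting $p=\tanh u$, we obtain
\[
g(u)=\frac{1-p^2}{ab}\int_{p/\tanh b}^{\tanh a}\frac{1}{(1-s^2)(1-p^2/s^2)}\,\frac{ds}{s}\qquad(p<\tanh a\tanh b),
\]
where we integrate out $v=p/s$ and pick up $1/s$ from $dv$. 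As $u$ increases, the lower limit increases, so the domain of integration shrinks. The integrand, however, depends on $p$ through $1/(1-p^2/s^2)$, which increases in $p$, while the prefactor $1-p^2$ decreases. Monotonicity is therefore not automatic, and this is the main obstacle.

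To handle it, we rewrite the integral in the variable $y$, i.e.\ $v=\tanh y$ with $s=p/v$. We expect the combination $(1-p^2)/\big((1-s^2)(1-v^2)\big)$ times the Jacobian to reduce to a kernel of the form $\tanh(y)$-weighted terms that is explicitly nonincreasing in $u$ for fixed $y$, since $\chi_y'$ in the proof of Proposition~\ref{prop:A} is decreasing in $|x|$. Concretely, conditionally on $|Y|=y$, the variable $|X|\star y=\chi_y(|X|)$ is a concave increasing image of a uniform variable, because $\chi_y'(x)=\tanh y/(1+\sinh^2x/\cosh^2y)$ is decreasing in $x\ge0$. Its density is $1/(a\,\chi_y'(\chi_y^{-1}(u)))$ on $[0,\chi_y(a)]$, which is nondecreasing in $u$: this is the wrong direction. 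So conditioning on one half alone fails, and the averaging over $y$ must compensate. We would first try the symmetric decomposition into the regions $\{|X|\le|Y|\}$ and $\{|X|>|Y|\}$. By (E1), on each region $a\star b$ is the smaller variable plus a correction, and the smaller of two uniforms has a decreasing density; we would then show that the correction preserves the monotonicity by an explicit derivative computation, checking the sign of $\partial_u$ of the integrand after differentiating under the integral sign, including the boundary term $-\frac{d}{du}(p/\tanh b)\times$ integrand, which has the favourable sign.

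Once $K_1\star K_2$ is s.u., the consequences follow. Every $\xi_k$ is s.u.\ by induction, using the recursion \eqref{eq:intro-rec}, Wintner's theorem for the sum over branches, and the fact that $\xi_0=\beta J$ is s.u. For the maps on magnitude laws: $T_\star$ follows from the lemma applied to signed versions; $\minT$ is the zero-temperature limit, or directly, $\Pp(\min\le t)=1-(1-F)^2$ is concave when $F$ is concave and nondecreasing; $\oplus$ follows from Wintner's theorem; and $R$ is a composition of these.
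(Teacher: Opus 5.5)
\textbf{There is a genuine gap: the monotonicity of the density is never proved.} Your reductions are fine and agree with the paper: symmetry, the Khintchine mixture, the degenerate case $ab=0$, and the consequences for $\xi_k$, $T_\star$, $\minT$, $\oplus$ and $R$. Your density formula is also correct. After simplification it reads $g(u)=\frac1{ab}\int_{p/t_b}^{t_a}\frac{(1-p^2)s}{(s^2-p^2)(1-s^2)}\,ds$, with $p=\tanh u$, $t_a=\tanh a$, $t_b=\tanh b$, which is exactly the integral in the paper's proof.

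But the proposal stops there. The step that carries the lemma, that $g$ is nonincreasing, is not established; what follows is a list of routes you would try.
\begin{itemize}
\item Conditioning on one half: you correctly show this goes the wrong way.
\item Splitting at $\{|X|\le|Y|\}$ and controlling the correction term of (E1): this is only announced, not carried out.
\item Differentiating under the integral: you only note that the boundary term is negative. The interior term $\int\partial_p(\cdot)\,ds$ is positive, since the integrand increases in $p$ for fixed $s$, as you observed. So a quantitative comparison of the two terms is needed, and it is not made.
\end{itemize}

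\textbf{The missing idea is a partial-fraction decomposition.} Write
$\frac{(1-p^2)s}{(s^2-p^2)(1-s^2)}=\frac{s}{s^2-p^2}+\frac{s}{1-s^2}$. Both pieces have elementary primitives, so
\[
g(u)=\frac1{2ab}\log\frac{(t_a^2-p^2)(t_b^2-p^2)}{(1-t_a^2)(1-t_b^2)\,p^2}\qquad(p<t_at_b),
\]
and $g=0$ for $p\ge t_at_b$. Each factor inside the logarithm decreases in $p$, and the logarithm vanishes at $p=t_at_b$. Hence $g$ is nonnegative and nonincreasing in $p$, and therefore in $u$. The singularity at $p=0$ is only logarithmic and so integrable, hence there is no atom, and the distribution function is concave.

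Equivalently, your derivative route closes with this decomposition:
\[
\frac{d}{dp}(ab\,g)=-\frac1p-\frac p{t_a^2-p^2}-\frac p{t_b^2-p^2}<0 .
\]
Here the boundary term $-\frac1{p(1-t_b^2)}-\frac p{t_b^2-p^2}$ outweighs the positive interior term $\frac{t_b^2}{p(1-t_b^2)}-\frac p{t_a^2-p^2}$. With this computation inserted, your argument becomes the paper's proof.
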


The proof, an explicit computation via (E2) of the density of $X\star Y$ for independent uniform
$X,Y$, is in Appendix~\ref{app:proofs}.

In the variable $\tanh\xi$ the series map is a product, and the product of an s.u.\ variable with
an independent variable is unimodal \cite[Prop.~3.6]{CT1998}; in the variable $\xi$ used here we did
not find Lemma~\ref{lem:U} stated, and it is elementary.

\subsection{Peakedness comparison}\label{sec:lemM}

\begin{lemma}\label{lem:M}
\begin{enumerate}
\item[\textup{(a)}] If $S$ is s.u.\ and $t\ge0$, then $g(x):=\Pp(|S+x|\le t)$ is even and nonincreasing in
$|x|$.
\item[\textup{(b)}] If $S$ is s.u., $W,W'$ are independent of $S$, and $\Pp(|W|\le t)\le\Pp(|W'|\le t)$ for
all $t$, then $\Pp(|S+W|\le t)\le\Pp(|S+W'|\le t)$ for all $t$.
\item[\textup{(c)}] If $Y_1,\dots,Y_m$ and $Y_1',\dots,Y_m'$ are independent families of s.u.\ variables
with $\Pp(|Y_i|\le t)\le\Pp(|Y_i'|\le t)$ for all $i,t$, then
$\Pp(|\sum Y_i|\le t)\le\Pp(|\sum Y_i'|\le t)$ for all $t$.
\end{enumerate}
\end{lemma}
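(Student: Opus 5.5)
The plan is to prove (a) by Khintchine's representation, then deduce (b) and (c) from (a) by averaging.

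For (a), write $S\overset d=U\Theta$ with $U$ uniform on $[-1,1]$ and $\Theta\ge0$ independent, as in Section~\ref{sec:su}. Conditioning on $\Theta=\theta$ gives
\[
g(x)=\E\,g_\Theta(x),\qquad g_\theta(x):=\Pp(|\theta U+x|\le t).
\]
For $\theta>0$, $g_\theta(x)$ is the length of the intersection $[x-t,x+t]\cap[-\theta,\theta]$, divided by $2\theta$. This is the overlap of two centred-symmetric intervals as one is shifted. So $g_\theta(x)$ is an even, trapezoid-shaped function of $x$, nonincreasing in $|x|$. For $\theta=0$ it is $\1\{|x|\le t\}$, which has the same properties. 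Evenness and monotonicity in $|x|$ survive the mixture over $\Theta$, which gives (a). An atom of $|S|$ at $0$ is covered by $\Theta=0$ having positive mass.

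For (b), condition on $W$: $\Pp(|S+W|\le t)=\E\,g(W)=\E\,\tilde g(|W|)$, where $\tilde g$ is nonincreasing on $[0,\infty)$ by (a). By hypothesis $|W'|$ is stochastically smaller than $|W|$, since $\Pp(|W|\le s)\le\Pp(|W'|\le s)$ for all $s$. Hence $\E\,\tilde g(|W|)\le\E\,\tilde g(|W'|)$. One way to see this is the layer-cake identity $\E\,\tilde g(|W|)=\tilde g(\infty)+\int\Pp(\tilde g(|W|)>u)\,du$. Since $\tilde g$ is nonincreasing, $\{\tilde g(|W|)>u\}$ is an event of the form $\{|W|<s_u\}$ or $\{|W|\le s_u\}$. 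The hypothesis is stated for $\le$, and it extends to $<$ by taking left limits. No s.u.\ assumption on $W$ or $W'$ is needed.

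For (c), replace the $Y_i$ by the $Y_i'$ one at a time, taking the two families independent of each other. At step $i$, compare $S_i+Y_i$ with $S_i+Y_i'$, where
\[
S_i:=Y_1'+\dots+Y_{i-1}'+Y_{i+1}+\dots+Y_m.
\]
By Wintner's theorem $S_i$ is s.u., and it is independent of $Y_i$ and $Y_i'$. Part (b) then shows each replacement does not decrease $\Pp(|\cdot|\le t)$, and chaining the $m$ inequalities gives (c). Here the s.u.\ hypothesis on the $Y$'s enters only through $S_i$ being s.u. A sum containing just one term ($m=1$) is trivial. The empty sum, which occurs as $S_i=0$ when $m=1$, is s.u. as an atom at $0$.

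The only delicate point is measure-theoretic bookkeeping in (a): handling the atom of $\Theta$ at $0$ and boundary cases such as $|x|=t\pm\theta$. Both are resolved by the explicit overlap formula, which is a continuous trapezoid for $\theta>0$ and an indicator for $\theta=0$.
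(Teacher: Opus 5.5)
Your proposal is correct and follows essentially the same route as the paper: a Khintchine mixture of centred uniform windows (plus the atom at $0$) for (a), conditioning on $W$ and using that $g$ is nonincreasing in $|x|$ together with the stochastic dominance of $|W'|$ under $|W|$ for (b), and one-at-a-time replacement with Wintner's theorem for (c). You spell out details the paper leaves implicit: the layer-cake step with its left limits, and the fact that only the remaining sum $S_i$ needs to be s.u.
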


\begin{proof}
(a) For $S$ uniform on $[-r,r]$, $g(x)=|[-t-x,t-x]\cap[-r,r]|/(2r)$, a window of fixed length
moving away from the centre; for $S=0$, $g=\1\{|x|\le t\}$; mix (Khintchine). (b)
$\Pp(|S+W|\le t)=\E g(|W|)$ by independence and evenness, and $g$ is nonincreasing on $[0,\infty)$.
(c) Replace $Y_1,\dots,Y_m$ one at a time using (b); the remaining sums are s.u.\ by Wintner's theorem.
\end{proof}

For (a) cf.\ Anderson's theorem \cite{Anderson1955}, and for (c) cf.\ Birnbaum's comparison of
peakedness \cite{Birnbaum1948} and its extension by Sherman \cite[Lemma~3]{Sherman1955}. Those results are stated for laws with densities, while the laws used
below have atoms at $0$; the proof above covers them.

\begin{corollary}[Monotone operations]\label{cor:monotone}
For s.u.\ magnitude laws, $\nu'\lpk\nu$ implies $T_\star(\nu')\lpk T_\star(\nu)$,
$\minT(\nu')\lpk\minT(\nu)$ and $R(\nu')\lpk R(\nu)$; and $\nu_1'\lpk\nu_1$ implies
$\nu_1'\oplus\nu_2\lpk\nu_1\oplus\nu_2$. The operation $\oplus$ is associative and commutative.
\end{corollary}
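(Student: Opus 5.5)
The corollary asserts four things, and I will prove them in this order: monotonicity of $T_\star$, then of $\minT$, then of $\oplus$ together with its algebraic properties, and finally of $R$.

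\emph{Monotonicity of $T_\star$ and $\minT$.} I would couple by quantiles. Take $U_1,U_2$ independent uniform on $(0,1)$ and set $Y_i:=F_\nu^{-1}(U_i)$ and $Y_i':=F_{\nu'}^{-1}(U_i)$, using generalized inverses. Since $F_{\nu'}\ge F_\nu$, the quantiles satisfy $F_{\nu'}^{-1}\le F_\nu^{-1}$, so $Y_i'\le Y_i$ pointwise. By (E0) of Lemma~\ref{lem:series}, the map $\star$ on $[0,\infty)^2$ is nondecreasing in each argument. Hence $Y_1'\star Y_2'\le Y_1\star Y_2$ and $\min(Y_1',Y_2')\le\min(Y_1,Y_2)$ pointwise. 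Comparing distribution functions then gives $T_\star(\nu')\lpk T_\star(\nu)$ and $\minT(\nu')\lpk\minT(\nu)$. No unimodality is needed for these two steps.

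\emph{The operation $\oplus$.} I would write $\nu_1\oplus\nu_2$ as the law of $|S_1+S_2|$, where $S_i=e_iX_i$ are independent s.u.\ variables. Because $S_i$ is symmetric, $|S_i|\sim\nu_i$.

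\begin{itemize}
\item \emph{Commutativity} is immediate from this representation.
\item \emph{Associativity.} By Lemma~\ref{lem:U} (equivalently Wintner's theorem), $S_1+S_2$ is s.u. Therefore $e\,|S_1+S_2|$, with $e$ an independent fair sign, has the same law as $S_1+S_2$. It follows that $(\nu_1\oplus\nu_2)\oplus\nu_3$ is the law of $|S_1+S_2+S_3|$, and by symmetry so is $\nu_1\oplus(\nu_2\oplus\nu_3)$.
\item \emph{Monotonicity.} Suppose $\nu_1'\lpk\nu_1$. Apply Lemma~\ref{lem:M}(b) with $S=S_2$, $W=e_1X_1$ and $W'=e_1X_1'$. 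The hypothesis $\Pp(|W|\le t)\le\Pp(|W'|\le t)$ is exactly $\nu_1'\lpk\nu_1$, and the conclusion is $\nu_1'\oplus\nu_2\lpk\nu_1\oplus\nu_2$. This step does use that $S_2$ is s.u., which is why $\nu_2$ must be an s.u.\ magnitude law.
\end{itemize}

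\emph{Monotonicity of $R$.} Write $R(\nu)=T_\star(\nu)^{\oplus n}$. From $\nu'\lpk\nu$, the first step gives $T_\star(\nu')\lpk T_\star(\nu)$, and both are s.u.\ magnitude laws by Lemma~\ref{lem:U}. I would then replace the $n$ factors one at a time, using commutativity and associativity of $\oplus$ and its monotonicity in one argument. At each stage the partial $\oplus$-sums are s.u.\ magnitude laws by Lemma~\ref{lem:U}, so the monotonicity step applies. Alternatively, Lemma~\ref{lem:M}(c) gives the whole replacement in one stroke. Chaining the $n$ inequalities gives $R(\nu')\lpk R(\nu)$.

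The only delicate point is the bookkeeping of hypotheses: every application of Lemma~\ref{lem:M} requires the fixed summand to be s.u. This is guaranteed throughout by Lemma~\ref{lem:U}, together with the fact that $T_\star$ preserves s.u.\ magnitude laws. Atoms at $0$ cause no trouble, because Lemma~\ref{lem:M} was proved for such laws.
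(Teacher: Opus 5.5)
Your proof is correct and follows the paper's route: the quantile coupling with (E0) handles $T_\star$ and $\minT$, Lemma~\ref{lem:M}(b) with $S=e_2X_2$ handles $\oplus$, and Lemma~\ref{lem:M}(c), whose summands are s.u.\ by Lemma~\ref{lem:U}, handles $R$. You also prove associativity, which the paper leaves implicit; your argument is sound, and it needs only the symmetry of $S_1+S_2$, not Wintner's theorem.
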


\begin{proof}
$T_\star$ and $\minT$: couple by quantiles, $Y_1'\le Y_1$ and $Y_2'\le Y_2$, and use (E0). $\oplus$:
Lemma~\ref{lem:M}(b) with $S=e_2X_2$. $R$: $T_\star$, then Lemma~\ref{lem:M}(c), whose summands are
s.u.\ by Lemma~\ref{lem:U}.
\end{proof}

\subsection{The bridge}\label{sec:bridge}

\begin{definition}\label{def:bridge}
A \emph{bridge certificate} for $(n,\beta)$ is a finite sequence $\nu_0,\dots,\nu_J$ of laws on
$[0,\infty)$, where the integer $J$ is the number of bridge steps, each law given explicitly, with
distribution functions $F_j:=F_{\nu_j}$, such that
\begin{enumerate}
\item[\textup{(S)}] every $\nu_j$ is an s.u.\ magnitude law;
\item[\textup{(B0)}] $F_0(t)\ge\Pp(\beta|J_e|\le t)=\operatorname{erf}(t/(\beta\sqrt2))$ for all $t\ge0$;
\item[\textup{(B$j$)}] $\nu_{j+1}\lpk R(\nu_j)$ for $0\le j<J$.
\end{enumerate}
\end{definition}

\begin{theorem}[Bridge]\label{thm:bridge}
Under Definition~\ref{def:bridge}, $\Pp(|\xi_j|\le t)\le F_j(t)$ for all $j\le J$ and $t\ge0$, at every
$\beta'\ge\beta$.
\end{theorem}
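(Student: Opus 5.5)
The plan is induction on $j$, carried out at an arbitrary fixed $\beta'\ge\beta$; nothing in the chain $\nu_0,\dots,\nu_J$ depends on the temperature except through (B0).

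\emph{Base case.} At inverse temperature $\beta'$ we have $\xi_0=\beta'J_e$, so
\[
\Pp(|\xi_0|\le t)=\operatorname{erf}\bigl(t/(\beta'\sqrt2)\bigr)\le\operatorname{erf}\bigl(t/(\beta\sqrt2)\bigr)\le F_0(t),
\]
because $\operatorname{erf}$ is increasing and $\beta'\ge\beta$, and then (B0) applies. Equivalently, $\nu_0\lpk\operatorname{law}|\xi_0|$. Moreover $\xi_0$ is s.u., since a centred Gaussian has a symmetric density that is nonincreasing on $(0,\infty)$. By Lemma~\ref{lem:U}, every $\xi_k$ is therefore s.u., and $\operatorname{law}|\xi_{k+1}|=R(\operatorname{law}|\xi_k|)$ by the discussion after the definition of $R$ in Section~\ref{sec:su}.

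\emph{Inductive step.} Suppose $\nu_j\lpk\mu_j:=\operatorname{law}|\xi_j|$ for some $j<J$. Both laws are s.u.\ magnitude laws: $\nu_j$ by (S), and $\mu_j$ by Lemma~\ref{lem:U}. Corollary~\ref{cor:monotone} then gives $R(\nu_j)\lpk R(\mu_j)=\mu_{j+1}$. Combining this with (B$j$), $\nu_{j+1}\lpk R(\nu_j)$, transitivity of $\lpk$ (pointwise inequality of distribution functions) yields $\nu_{j+1}\lpk\mu_{j+1}$. This is exactly $\Pp(|\xi_{j+1}|\le t)\le F_{j+1}(t)$ for all $t\ge0$.

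\emph{Where care is needed.} The main obstacle is hidden in the monotonicity of $R$, and that is where (S) matters. Corollary~\ref{cor:monotone} requires both arguments to be s.u.\ magnitude laws. For $T_\star$ this is not an issue: the quantile coupling together with (E0) works for any laws. For the $n$-fold $\oplus$, however, Lemma~\ref{lem:M}(b),(c) needs the summands to be s.u. This is supplied by Lemma~\ref{lem:U}, applied to $T_\star(\nu_j)$ and $T_\star(\mu_j)$ through (S) and the s.u.\ property of $\xi_j$, so the replacement of summands can be done one at a time as in Lemma~\ref{lem:M}(c). Two further points need checking. First, (B$j$) is stated with $R(\nu_j)$ computed from the explicit law $\nu_j$, not from the true law, so the comparison goes in the right direction. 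Second, ties and atoms at $0$ are handled by the ``$\le t$'' convention of $\lpk$, which is what Lemma~\ref{lem:M} uses. Since the temperature enters only the base case, the conclusion holds at every $\beta'\ge\beta$.
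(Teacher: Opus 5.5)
Your proof is correct and is essentially the paper's argument: the base case uses monotonicity of $\operatorname{erf}$ in $\beta$ together with (B0), and the inductive step is $\nu_{j+1}\lpk R(\nu_j)\lpk R(\operatorname{law}|\xi_j|)=\operatorname{law}|\xi_{j+1}|$, via (B$j$), Corollary~\ref{cor:monotone} and Lemma~\ref{lem:U}. You also say where the s.u.\ hypothesis enters: only in the $\oplus$ part of the monotonicity of $R$, through (S) and Lemma~\ref{lem:U}. That correctly spells out what the paper's short proof leaves implicit.
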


\begin{proof}
At $\beta'\ge\beta$, $\Pp(\beta'|J_e|\le t)\le\Pp(\beta|J_e|\le t)\le F_0(t)$, so
$\nu_0\lpk\operatorname{law}|\xi_0|$. If $\nu_j\lpk\operatorname{law}|\xi_j|$, then by (B$j$),
Corollary~\ref{cor:monotone} and Lemma~\ref{lem:U},
$\nu_{j+1}\lpk R(\nu_j)\lpk R(\operatorname{law}|\xi_j|)=\operatorname{law}|\xi_{j+1}|$.
\end{proof}

Condition (B$j$) is checked by computer through a decomposition of the $n$-fold sum into pairwise
sums and coarsenings (replacements of a law by a more peaked s.u.\ law on a coarser grid), which is
justified by Lemma~\ref{lem:decomp} in Appendix~\ref{app:decomp}.

\subsection{The scale-covariant family}\label{sec:family}

For $\alpha\in[0,1]$ and a law $X_c$ on $[0,\infty)$ with distribution function $F_c$, let
$X^{(\alpha)}$ be $0$ with probability $\alpha$ and distributed as $X_c$ otherwise, so that
$\Pp(LX^{(\alpha)}\le t)=\alpha+(1-\alpha)F_c(t/L)$.

The idea of the family is as follows. Suppose that $|\xi_k|$ is less peaked than $LX^{(\alpha)}$,
the shape $X_c$ at scale $L$ with an atom of mass $\alpha$ at $0$. By (E0)--(E1) of
Lemma~\ref{lem:series}, a branch of the next level contributes a term of modulus at least
$L(\min(X,X')-\varepsilon)_+$, where $X,X'$ are independent copies of $X^{(\alpha)}$ and
$\varepsilon=c_0/L$ is the loss of the series map in units of the scale; the branch is
\emph{dead}, and contributes nothing, if one of its inputs comes from the atom, which has
probability at most $2\alpha$. The atom is kept proportional to the loss, $\alpha=A\varepsilon$,
where $A$ is the constant of the family certificate below (not the pole $A$). Without atom and
loss, the sums of $m$ branch terms are controlled by the distribution functions $G_m$ of the
explicit laws $\eta_m$ of (C0), and (C1) says that the sum of $n$ terms is less peaked than
$\lambda X_c$: the scale grows by the factor $\lambda$. Condition (C2) says that the first-order
corrections in $\varepsilon$---the shift by at most $n\varepsilon$, bounded through $\sH_m$ by
concavity, and the dead branches, whose number is binomial---are paid for by the slack $s$ of (C1)
and by the atom $\alpha/\lambda$ of the target law. Condition (C2) only gets easier as
$\varepsilon$ decreases, and $\varepsilon_i=c_0/L_i$ decreases along the levels, so a single check at $\bareps$, an upper bound for the largest loss $\varepsilon_0=c_0/L_0$, covers every level (Theorem~\ref{thm:family}). The shape $X_c$
is fixed; in the computation it is a scaled numerical approximation of a zero-temperature fixed
shape, a heuristic choice that plays no role in the proof.

\begin{definition}\label{def:family}
A \emph{family certificate} for $n$ consists of
\begin{itemize}
\item an atomless s.u.\ magnitude law $X_c$ with piecewise-linear distribution function $F_c$,
$F_c(x_{\max})=1$ for some $x_{\max}$, and $f_c:=F_c'(0+)<\infty$;
\item rationals $\lambda>1$, $A>0$ and $\bareps>0$ with $2A\bareps\le1$;
\item s.u.\ magnitude laws $\eta_1,\dots,\eta_n$ with piecewise-linear distribution functions
$G_1,\dots,G_n$ and right derivatives $g_m:=G_m'(\cdot+)$;
\end{itemize}
such that, with $\sH_m:=G_m+n\bareps g_m$ $(1\le m\le n)$, $\hsH_m:=\max(\sH_m,G_n)$ $(1\le m\le n-1)$
and $\hsH_0:=1$,
\begin{enumerate}
\item[\textup{(C0)}] $\eta_1\lpk\minT(X_c)$ and $\eta_{m+1}\lpk\eta_m\oplus\eta_1$ for $1\le m<n$;
\item[\textup{(C1)}] $s(t):=F_c(t/\lambda)-G_n(t)\ge0$ for $t\in[0,\lambda x_{\max}]$;
\item[\textup{(C2)}] for $t\in[0,\lambda x_{\max})$,
\begin{gather*}
s(t)+\bareps\,\frac A\lambda\big(1-F_c(t/\lambda)\big)\ge\bareps\,\cE(\bareps,t),\\
\cE(\varepsilon,t):=n\,g_n(t)+\sum_{k=1}^{n}\binom nk(2A)^k\varepsilon^{k-1}\big[\hsH_{n-k}(t)-G_n(t)\big],
\end{gather*}
where in $\cE(\varepsilon,t)$ the functions $\sH_m,\hsH_m$ are formed with $\varepsilon$ in place of $\bareps$.
\end{enumerate}
Replacing each $\hsH_m$ by a larger function gives a stronger condition.
\end{definition}

By (C0) and Corollary~\ref{cor:monotone}, $\Pp(|\sum_{r\le m}e_rM_r|\le t)\le G_m(t)$ for independent
$M_r\sim\minT(X_c)$ and fair signs $e_r$. The condition $2A\bareps\le1$, used in the proof of
Theorem~\ref{thm:family}, is listed for convenience; it also follows from (C1) and (C2) at $t=0$: $F_c(0)=0$ because $X_c$ is atomless, so (C1) forces $G_n(0)=0$ and $s(0)=0$; all
brackets are nonnegative, and the term $k=n$ gives $A/\lambda\ge(2A)^n\bareps^{\,n-1}$, that is
$(2A\bareps)^{n-1}\le1/(2\lambda)<1$ (for $n\ge2$).

\begin{theorem}[Invariance of the family]\label{thm:family}
Assume Definition~\ref{def:family} and that $\xi_0$ is s.u. Suppose that at some level $J$
\[
\Pp(|\xi_J|\le t)\le\alpha+(1-\alpha)F_c(t/L)\quad(t\ge0),\qquad L\ge c_0/\bareps,\quad
0\le\alpha\le Ac_0/L .
\]
Put $L_i:=L\lambda^i$, $\varepsilon_i:=c_0/L_i$ and $\alpha_i:=A\varepsilon_i$. Then for all $i\ge0$ and
$u\ge0$
\begin{equation}\label{eq:family}
\Pp(|\xi_{J+i}|\le u)\le\alpha_i+(1-\alpha_i)F_c(u/L_i)\le\frac{Ac_0+f_cu}{L_i}.
\end{equation}
\end{theorem}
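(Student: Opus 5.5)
Induction on $i$ is the natural route. The case $i=0$ is the hypothesis. The hypothesis is monotone: a smaller $\alpha$ gives a more peaked target law, since $\alpha+(1-\alpha)F_c$ increases in $\alpha$. So we may take $\alpha=\alpha_0=Ac_0/L$ exactly, with $\varepsilon_0=c_0/L\le\bareps$. Suppose $|\xi_{J+i}|\gpk L_iX^{(\alpha_i)}$, with $L_iX^{(\alpha_i)}$ an s.u. magnitude law. By Corollary~\ref{cor:monotone}, $|\xi_{J+i+1}|\gpk R(\operatorname{law}L_iX^{(\alpha_i)})$. It therefore suffices to prove the one-step statement
\[
R(\operatorname{law}L X^{(A\varepsilon)})\gpk L\lambda X^{(A\varepsilon/\lambda)}\qquad(\varepsilon=c_0/L\le\bareps).
\]
The second inequality in \eqref{eq:family} is elementary: $\alpha_i+(1-\alpha_i)F_c(u/L_i)\le\alpha_i+F_c(u/L_i)$, and $F_c(x)\le f_cx$ by concavity with $F_c(0)=0$.

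For the one-step statement I work in units of $L$. Take independent pairs $(X_r,X'_r)$ of copies of $X^{(\alpha)}$, $r\le n$. By (E0)--(E1), the $r$-th branch term has modulus at least $L(\min(X_r,X'_r)-\varepsilon)_+$. To pass from these lower bounds on moduli to a peakedness bound on the signed sum, I use quantile coupling and Lemma~\ref{lem:M}(c). This requires the lower-bound variables $(\min-\varepsilon)_+$ with fair signs to be s.u., which holds because shifting a concave distribution function and truncating at zero keeps it concave.

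Next I condition on the set $D$ of dead branches, those with an atom input. Each branch is dead independently with probability at most $2\alpha=2A\varepsilon\le1$. The live branches have $\min(X_r,X'_r)\sim\minT(X_c)$. The dead ones contribute zero, which is most peaked, so the count $|D|=k$ is stochastically dominated by $\mathrm{Bin}(n,2\alpha)$, and the bound to be established is monotone in $k$.

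Given $k$ dead branches, the remaining $m=n-k$ terms each satisfy $\min-\varepsilon$ before truncation. Truncation only helps in the sense of the comparison, so the sum is at least $|\sum e_rM_r|-m\varepsilon$ in modulus. Thus $\Pp(|\cdot|\le t)\le G_m(t+n\varepsilon)$ via (C0). Concavity of $G_m$ gives $G_m(t+n\varepsilon)\le G_m(t)+n\varepsilon g_m(t)=\sH_m(t)$, with the same bound relative to $G_n$ at full count. Capping at $1$ and using $\hsH_m\ge\sH_m$ gives the binomial mixture
\[
\Pp(\text{sum}\le t)\le\sum_k\binom nk(2\alpha)^k(1-2\alpha)^{n-k}\hsH_{n-k}(t)\le G_n(t)+n\varepsilon g_n(t)+\sum_{k\ge1}\binom nk(2A\varepsilon)^k[\hsH_{n-k}(t)-G_n(t)].
\]
Here I drop the factors $(1-2\alpha)^{n-k}\le1$ on nonnegative brackets. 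For $k=0$ the term is $\sH_n$, which accounts for the $ng_n$ term. This bound equals $G_n(t)+\varepsilon\cE(\varepsilon,t)$.

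The target, in rescaled $t$, is $\alpha/\lambda+(1-\alpha/\lambda)F_c(t/\lambda)=F_c(t/\lambda)+\varepsilon\frac A\lambda(1-F_c(t/\lambda))$. The required inequality is therefore exactly (C2) at $\varepsilon$. For $t\ge\lambda x_{\max}$ the target equals $1$, so nothing needs checking there.

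The main obstacle is reducing (C2) at $\varepsilon$ to (C2) at $\bareps$. I would show that $\varepsilon\mapsto\frac{1}{\varepsilon}s(t)+\frac A\lambda(1-F_c)-\cE(\varepsilon,t)$ is nonincreasing in $\varepsilon$. The term $s/\varepsilon$ decreases because $s\ge0$ by (C1). The term $\cE$ increases because each $\varepsilon^{k-1}$ increases and each $\hsH_m$ increases with $\varepsilon$ through $\sH_m$, with brackets nonnegative by the max with $G_n$. So (C2) at $\bareps$ implies (C2) at every $\varepsilon_i\le\bareps$.

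A secondary care point is the s.u. and quantile-coupling justification for replacing the exact series values by the lower bounds $(\min-\varepsilon)_+$ inside the signed sum. I would handle it with Lemma~\ref{lem:M}(c) applied branch by branch.
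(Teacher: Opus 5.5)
Your proposal is correct and follows essentially the paper's proof. The shared steps are: induction on $i$ with the base case absorbed by monotonicity in $\alpha$ (note it is the \emph{larger} $\alpha$ that gives the more peaked law); the branch lower bound $L(\min(X,X')-\varepsilon)_+$ via (E0)--(E1) and Lemma~\ref{lem:M}(c); conditioning on dead branches, with the $n\varepsilon$ shift absorbed into $\sH_m$ by concavity; and monotonicity in $\varepsilon$ to pass from (C2) at $\bareps$ to (C2) at $\varepsilon_i$. Your preliminary use of Corollary~\ref{cor:monotone} is only a repackaging of the paper's quantile coupling.

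The one loose step is dominating the dead count by $\mathrm{Bin}(n,2\alpha)$. That needs the true conditional probability $\Pp(|\Upsilon|\le t\mid k)$ to be nondecreasing in $k$, which holds by Lemma~\ref{lem:M}(b) with $W'=0$. It does not follow from $\hsH_{n-k}$, which need not be monotone in $k$ because the witnesses $G_m$ are not ordered in $m$. The paper avoids the issue by writing $\sum_k\zeta_k\hsH_{n-k}-G_n=\sum_k\zeta_k(\hsH_{n-k}-G_n)$ and bounding $\zeta_k\le\binom nk(2A\varepsilon)^k$ directly.
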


\begin{proof}
Note $\varepsilon_i\le\bareps$ and $\alpha_i\le A\bareps\le\frac12$. We show
$|\xi_{J+i}|\gpk L_iX^{(\alpha_i)}$ by induction. For $i=0$: $\alpha\le\alpha_0$ gives
$\alpha+(1-\alpha)F_c\le\alpha_0+(1-\alpha_0)F_c$, so $X^{(\alpha_0)}$ is the more peaked law. Step
$i\to i+1$; write $\varepsilon=\varepsilon_i$, $\alpha=\alpha_i$, $L=L_i$.
\begin{enumerate}
\item $\xi_{J+i+1}=\sum_{r=1}^nY_r$ with $Y_r=\xi^{(r,1)}\star\xi^{(r,2)}$ s.u.\ (Lemma~\ref{lem:U}) and
$|Y_r|=|\xi^{(r,1)}|\star|\xi^{(r,2)}|$.
\item Couple by quantiles, $|\xi^{(r,k)}|\ge LX_{rk}$ with $X_{rk}$ iid $X^{(\alpha)}$. By (E0) and
(E1), $|Y_r|\ge(LX_{r1})\star(LX_{r2})\ge(LM_r-c_0)_+=L(M_r-\varepsilon)_+$ with
$M_r:=\min(X_{r1},X_{r2})$.
\item $Y_r':=e_rL(M_r-\varepsilon)_+$ is s.u.: the distribution function of $(M_r-\varepsilon)_+$ is
$1-(1-F_\alpha(t+\varepsilon))^2$, concave, where $F_\alpha:=\alpha+(1-\alpha)F_c$ is concave.
\item By Lemma~\ref{lem:M}(c), $\Pp(|\xi_{J+i+1}|\le Lu)\le\Pp(|\Upsilon|\le u)$ with
$\Upsilon:=\sum_re_r(M_r-\varepsilon)_+$.
\item \emph{Dead branches.} Call branch $r$ dead if one of its inputs comes from the atom; then
$(M_r-\varepsilon)_+=0$. The number $k$ of dead branches is binomial with parameters $n$ and
$\pi_\alpha:=1-(1-\alpha)^2\le2\alpha$, and given the dead set the other $M_r$ are iid $\minT(X_c)$ and
independent of the signs. Since $|(M-\varepsilon)_+-M|\le\varepsilon$, given a dead set of size $k<n$
\[
\Pp(|\Upsilon|\le u\mid k)\le\Pp\Big(\Big|\sum_{\rm alive}e_rM_r\Big|\le u+n\varepsilon\Big)\le
G_{n-k}(u+n\varepsilon)\le\sH_{n-k}(u),
\]
by (C0) and concavity of $G_{n-k}$, where here and below $\sH_m,\hsH_m$ are formed with
$\varepsilon$ in place of $\bareps$, as in (C2); for $k=n$ the bound is $1=\hsH_0$. With
$\zeta_k:=\Pp(\mathrm{Bin}(n,\pi_\alpha)=k)$ and $\hsH_n:=\sH_n$ this gives
$\Pp(|\Upsilon|\le u)\le\sum_{k=0}^n\zeta_k\hsH_{n-k}(u)$.
\item Every bracket $\hsH_{n-k}-G_n$ is nonnegative ($\hsH_n-G_n=n\varepsilon g_n$), $\zeta_0\le1$ and
$\zeta_k\le\binom nk\pi_\alpha^k\le\binom nk(2A\varepsilon)^k$, so
\[
\sum_k\zeta_k\hsH_{n-k}-G_n=\sum_k\zeta_k(\hsH_{n-k}-G_n)\le n\varepsilon g_n
+\sum_{k\ge1}\binom nk(2A\varepsilon)^k[\hsH_{n-k}-G_n]=\varepsilon\,\cE(\varepsilon,u).
\]
\item \emph{Target.} $\Pp(L_{i+1}X^{(\alpha_{i+1})}\le Lu)=\alpha/\lambda+(1-\alpha/\lambda)F_c(u/\lambda)
=F_c(u/\lambda)+\varepsilon(A/\lambda)(1-F_c(u/\lambda))$. For $u\ge\lambda x_{\max}$ this is $1$. For
$u<\lambda x_{\max}$ it suffices that $s(u)/\varepsilon+(A/\lambda)(1-F_c(u/\lambda))\ge\cE(\varepsilon,u)$. By
(C1) the left side is nonincreasing in $\varepsilon$; $\cE(\varepsilon,u)$ is nondecreasing in $\varepsilon$ (the
functions $\sH_m,\hsH_m$ and the powers $\varepsilon^{k-1}$ increase, and the brackets are nonnegative);
so (C2) at $\bareps$ gives it for every $\varepsilon\le\bareps$.
\end{enumerate}
Hence $|\xi_{J+i+1}|\gpk L_{i+1}X^{(\alpha_{i+1})}$. The last inequality of \eqref{eq:family} uses
$(1-\alpha_i)F_c\le F_c$ and $F_c(v)\le f_cv$ (concavity and $F_c(0)=0$).
\end{proof}

Nothing in the proof depends on the value of $n$. The same architecture---finitely many exact
renormalization steps followed by a region that the map preserves---was used by Okuyama and Ohzeki
\cite{OO2026} to prove the opposite conclusion for $\pm J$ couplings with $n=2$ (and, for larger
scale factors, even $n$ under their sufficient condition). An analytic antecedent of an invariant
family for this recursion, at large $n$, is the alternation of Lemmas~4.2 and~4.3 of
\cite{CE1984}.

\subsection{Certificates and coupling growth}\label{sec:certB}

\begin{definition}\label{def:certB}
A \emph{certificate} for $(n,\beta)$ is a bridge certificate (Definition~\ref{def:bridge}), a family
certificate (Definition~\ref{def:family}) for the same $n$, and the entry condition
\begin{enumerate}
\item[\textup{(EN)}] $F_J(t)\le\alpha_0'+(1-\alpha_0')F_c(t/L_0)$ for all $t\ge0$, with $L_0\ge c_0/\bareps$ and
$0\le\alpha_0'\le Ac_0/L_0$.
\end{enumerate}
\end{definition}

\begin{theorem}[Coupling growth]\label{thm:growth}
If a certificate for $(n,\beta)$ exists, then at every $\beta'\ge\beta$, for all $j\le J$, $i\ge0$ and
$t,u\ge0$, with $L_i=L_0\lambda^i$ and $\alpha_i=Ac_0/L_i$,
\[
\Pp(|\xi_j|\le t)\le F_j(t),\qquad
\Pp(|\xi_{J+i}|\le u)\le\alpha_i+(1-\alpha_i)F_c(u/L_i)\le\frac{Ac_0+f_cu}{L_0\lambda^i}.
\]
Consequently $\E\sech^2\xi_{J+i}\le(Ac_0+f_c)/L_i$, $\E(1-\tanh|\xi_{J+i}|)\le(Ac_0+f_c\ln2)/L_i$, and
$|\xi_N|\to\infty$ in probability.
\end{theorem}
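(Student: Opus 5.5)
The plan is to chain Theorem~\ref{thm:bridge} and Theorem~\ref{thm:family}, and then obtain the moment consequences by a layer-cake integration.

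\emph{Bridge levels.} For $\beta'\ge\beta$, Theorem~\ref{thm:bridge} gives $\Pp(|\xi_j|\le t)\le F_j(t)$ for every $j\le J$ and $t\ge0$. The bridge needs $\xi_0=\beta'J_e$ to be s.u., which holds for Gaussian couplings and is used throughout via Lemma~\ref{lem:U}.

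\emph{Entry into the family.} At $j=J$ I combine the bridge bound with (EN):
\[
\Pp(|\xi_J|\le t)\le F_J(t)\le\alpha_0'+(1-\alpha_0')F_c(t/L_0),
\]
with $L_0\ge c_0/\bareps$ and $\alpha_0'\le Ac_0/L_0$. These are exactly the hypotheses of Theorem~\ref{thm:family} with $L=L_0$ and $\alpha=\alpha_0'$. That theorem then gives \eqref{eq:family} with $L_i=L_0\lambda^i$ and $\alpha_i=Ac_0/L_i$, which is the second displayed bound. None of the family conditions (C0)--(C2) depends on $\beta$, so the conclusion holds at every $\beta'\ge\beta$.

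\emph{Consequences.} I use the layer-cake formula for a decreasing function of $|\xi|$. Since $\sech^2$ decreases from $1$ with derivative $-2\sech^2t\tanh t$,
\[
\E\sech^2\xi=\int_0^\infty2\sech^2t\tanh t\,\Pp(|\xi|\le t)\,dt.
\]
I insert the bound $\Pp(|\xi_{J+i}|\le t)\le(Ac_0+f_ct)/L_i$. The first term integrates to $Ac_0/L_i$, because $\int_0^\infty 2\sech^2t\tanh t\,dt=1$. The second gives $f_c/L_i\cdot\int_0^\infty2t\sech^2t\tanh t\,dt$. Integrating by parts, this integral equals $\int_0^\infty\sech^2t\,dt=1$, so the bound is $(Ac_0+f_c)/L_i$.

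Similarly, $1-\tanh|\xi|$ decreases from $1$ with derivative $-\sech^2t$, so
\[
\E(1-\tanh|\xi|)=\int_0^\infty\sech^2t\,\Pp(|\xi|\le t)\,dt.
\]
This uses $\int_0^\infty\sech^2t\,dt=1$ and $\int_0^\infty t\sech^2t\,dt=\ln2$, the latter since $[t\tanh t-\log\cosh t]_0^\infty=\ln2$. The result is $(Ac_0+f_c\ln2)/L_i$.

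Finally, for fixed $u$ the bound $(Ac_0+f_cu)/(L_0\lambda^i)\to0$ because $\lambda>1$, so $|\xi_N|\to\infty$ in probability.

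The only points needing care are the boundary terms in the two integrations by parts and the use of $\Pp(|\xi|\le t)$ instead of $\Pp(|\xi|<t)$. The latter is harmless, since $\le$ gives the larger probability and so still yields an upper bound. I expect no real obstacle: all the substantive work is already in Theorems~\ref{thm:bridge} and~\ref{thm:family}.
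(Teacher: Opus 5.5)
Your proposal is correct and follows the paper's proof: Theorem~\ref{thm:bridge} for $j\le J$, then (EN) to supply the hypotheses of Theorem~\ref{thm:family} with $L=L_0$ and $\alpha=\alpha_0'$, and finally the layer-cake bounds of Lemma~\ref{lem:rates} with $\int_0^\infty 2t\sech^2t\tanh t\,dt=1$ and $\int_0^\infty t\sech^2t\,dt=\ln2$. Your explicit integrations by parts simply carry out the computation that the paper cites from Lemma~\ref{lem:rates}.
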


\begin{proof}
Theorem~\ref{thm:bridge}, (EN), and Theorem~\ref{thm:family} with $L=L_0$ and $\alpha=\alpha_0'$; the
two expectations follow from Lemma~\ref{lem:rates} applied to $F(t)=\alpha_i+f_ct/L_i$, using
$\int_0^\infty2t\sech^2t\tanh t\,dt=1$ and $\int_0^\infty t\sech^2t\,dt=\ln2$.
\end{proof}

\subsection{Proofs of Theorem~\ref*{thm:B} and Corollary~\ref*{cor:Bn}}\label{sec:proofB}

Fix one of the five certificates of Table~\ref{tab:points}, verified as described in
Section~\ref{sec:computation}, and $\beta\ge\beta^*$. By Theorem~\ref{thm:growth} and Lemma~\ref{lem:U},
\[
F_k:=F_{\nu_k}\ (k\le J),\qquad F_{J+i}(t):=\alpha_i+f_ct/L_i\ (i\ge1),\qquad Q_k(t):=F_k(2t)
\]
is a small-ball profile: since $|\xi_k|$ is an atom at $0$ plus a nonincreasing density, every interval
of length $2t$ in $[0,\infty)$ has at most the mass of $[0,2t]$, so
$\sup_y\Pp(||\xi_k|-y|<t)\le\Pp(|\xi_k|\le2t)\le F_k(2t)$. For each $l\ge1$ let $\bar\phi_l$ be a
rational upper bound for $\ab_l$ computed from this profile. For $l-1\le J$ it is evaluated by the
checker (check (K) of Appendix~\ref{sec:checks}) and satisfies
\[
\bar\phi_l\ge\int_0^\infty(-\psi'(s))F_{l-1}(2s)\,ds+\int_0^\infty4e^{-4s}F_{l-1}(s)^2\,ds
=\E\psi(V/2)+\E e^{-4\max(V,V')}
\]
with $V,V'$ iid $\nu_{l-1}$; for $l-1=J+i$, $i\ge1$, the closed form
$\ab_l\le a_1/L_i+a_2/L_i^2$ holds with
\[
a_1:=\tfrac34Ac_0+2f_c\big(\tfrac12\ln2+\tfrac14\big),\qquad a_2:=A^2c_0^2+\tfrac12Ac_0f_c+\tfrac18f_c^2,
\]
(here $A$ is the constant of the family certificate), from
\begin{gather*}
\int_0^\infty(-\psi')(\alpha+2fs/L)\,ds=\tfrac34\alpha+(2f/L)\big(\tfrac12\ln2+\tfrac14\big),\\
\int_0^\infty4e^{-4s}(\alpha+fs/L)^2\,ds=\alpha^2+\alpha f/(2L)+f^2/(8L^2),
\end{gather*}
and we put
$\bar\phi_l:=a_1^+/L_i+a_2^+/L_i^2$, where $a_1^+,a_2^+$ are $a_1,a_2$ with $c_0$ and
$\frac12\ln2+\frac14$ replaced by rational upper bounds. Put $\bar T_j:=\sum_{l\ge j}\bar\phi_l$
(finite; the tail is geometric). We use the blue rates for both constants below; the spin chain has
the smaller rates $\as$ (Lemma~\ref{lem:rates}), which would give a larger $\kappa_Q$ but have not
been evaluated rigorously. Put
\begin{equation}\label{eq:kappas}
\kappa_Q:=\sum_{j\ge1}(2n-1)(2n)^{-j}(1-2\bar T_j)_+,\qquad
\kappa_C:=\sum_{j\ge1}(2n-1)(2n)^{-j}(1-\bar T_j)_+ .
\end{equation}
Both are positive because $\bar T_j\to0$.

\begin{proof}[Proof of Theorem~\ref{thm:B}]
Item (i) is Theorem~\ref{thm:growth}, with $\E\langle\sigma_A\sigma_B\rangle^2=1-\E\sech^2\xi_N\to1$
and ${\Pp(A\not\leftrightarrow B)}\le\E(1-\tanh|\xi_N|)\to0$ (Lemma~\ref{lem:X}(c)). The profile above has
$S_N,P_N\to0$ and $\ab_k\to0$. In Corollary~\ref{cor:limits}, $\Tb_{\infty,j}\le\bar T_j$, so
$\kappa^{\rm b}\ge\kappa_C$ and $\kappa^{\rm s}\ge\kappa_Q$, which gives (ii)--(iv).
\end{proof}

\begin{proof}[Proof of Corollary~\ref{cor:Bn}]
Write $\xi^{(m)}_k$ for the effective coupling of the lattice with $m$ branches at the same $\beta$.
We show $\Pp(|\xi^{(n')}_k|\le t)\le\Pp(|\xi^{(n)}_k|\le t)$ for all $k$ and $t\ge0$ by induction on
$k$; for $k=0$ both variables are $\beta J_e$. In the step write $\xi^{(n')}_{k+1}=S+W$, where $S$ is
the sum of the first $n$ branch terms in \eqref{eq:intro-rec} and $W$ the sum of the other $n'-n$.
All terms are independent and s.u.\ (Lemma~\ref{lem:U}), so Lemma~\ref{lem:M}(b) with $W'=0$ gives
$\Pp(|S+W|\le t)\le\Pp(|S|\le t)$. Moreover
$\operatorname{law}|S|=R(\operatorname{law}|\xi^{(n')}_k|)$ with $R$ formed with $n$ branches, and the
induction hypothesis says $\operatorname{law}|\xi^{(n)}_k|\lpk\operatorname{law}|\xi^{(n')}_k|$; by
Corollary~\ref{cor:monotone}, $\operatorname{law}|\xi^{(n)}_{k+1}|=R(\operatorname{law}|\xi^{(n)}_k|)\lpk
\operatorname{law}|S|$, which closes the induction. Hence every upper bound of
Theorem~\ref{thm:growth} for $n$ holds for $n'$, and since Lemma~\ref{lem:U} holds for every number
of branches, the functions $F_k$ and $Q_k$ above form a small-ball profile for the lattice with $n'$
branches. The proof of Theorem~\ref{thm:B} then applies verbatim, with $n'$ in the counting weights
of Corollary~\ref{cor:limits}.
\end{proof}

\section{The computation}\label{sec:computation}

This section states what the computer verifies for the five certificates of
Table~\ref{tab:points} and which proved statements the verification relies on; the certificates
that make Theorem~\ref{thm:A} explicit for standard Gaussian couplings
(Tables~\ref{tab:thresholds} and~\ref{tab:oneshot}; Theorem~\ref{thm:A} itself does not depend on
them) are described in Appendix~\ref{sec:gauss-certs}. The code for both, the frozen inputs, and
the verification logs are available at \url{https://github.com/PeaBrane/mk-spin-glass-order},
commit \href{https://github.com/PeaBrane/mk-spin-glass-order/tree/\repocommit}{\texttt{\repocommit}}.

Computer-assisted proofs in renormalization theory go back to Lanford's proof of the Feigenbaum
conjectures \cite{Lanford1982}, Eckmann, Koch and Wittwer's proof of universality for
area-preserving maps \cite{EKW1984}, whose second part sets up interval arithmetic on a computer,
and Koch and Wittwer's non-Gaussian fixed point for hierarchical scalar models \cite{KW1986}; these
works enclose fixed points of renormalization maps in function spaces. Here the enclosed objects
are distribution functions of random couplings, compared in the peakedness order. The
discretization is one-sided: the bridge laws are staircase laws, and every rounding and every
coarsening replaces a distribution function by a larger one, that is, by a more peaked law, so
that monotonicity of the map preserves the comparison
(Appendices~\ref{app:decomp}--\ref{sec:checks}). This follows the principle of the degrading
quantization of Tal and Vardy \cite{TalVardy2013}: polar bit-channels, whose construction is an
instance of density evolution, are replaced by quantized channels degraded with respect to them,
and degradation is preserved by the polar transforms. Here the order is the peakedness order, and
the series and parallel maps are the check-node and variable-node maps of belief propagation.

For each row of Table~\ref{tab:points} the frozen data are: a family (the shape $X_c$, the
rationals $\lambda$, $A$, $\bareps$, and the witness laws $\eta_1,\dots,\eta_n$), and a bridge (the
exact $\beta$, a dyadic grid, the parameters of the enclosure of the series map $T_\star$, a
schedule of coarsenings (replacements of a law by a more peaked one on a coarser grid), and $J$).
The checker rebuilds the laws $\nu_0,\dots,\nu_J$ deterministically from these data and verifies
(S), (B0), (B$j$) through the decompositions of Lemma~\ref{lem:decomp}, (C0)--(C2), and (EN), each
on the whole half-line. All laws are staircase laws (an atom at $0$ plus nonincreasing masses on
the bins of a uniform grid, in integer units of $2^{-48}$), with concave piecewise-linear
distribution functions, so each condition reduces to finitely many comparisons, bin by bin or at
knots. These are decided in integer and exact rational arithmetic; $\operatorname{erf}$,
$\delta$, $\psi$, $e^{-4s}$ and $\ln2$ are enclosed by interval arithmetic with directed rounding,
and no floating-point comparison decides an inequality (Appendix~\ref{sec:arith}). The checker then
evaluates $\kappa_Q$ and $\kappa_C$ from the certified profile of Section~\ref{sec:proofB}. The
mathematical inputs that are not computed---Lemmas~\ref{lem:series}, \ref{lem:X}, \ref{lem:U},
\ref{lem:M} and~\ref{lem:decomp}, Corollaries~\ref{cor:limits} and~\ref{cor:monotone}, and
Theorems~\ref{thm:transfer}, \ref{thm:bridge}, \ref{thm:family} and~\ref{thm:growth}---are proved
in the text and in Appendices~\ref{app:proofs} and~\ref{app:computation}.

The arithmetic model, the individual checks, the validation runs and the reproduction commands are
given in Appendices~\ref{sec:arith}--\ref{sec:repro}.

\section{Discussion and open problems}\label{sec:discussion}

\emph{An analytic proof for $n=3$.} The one-step constant $\kappa_3=11/10$ is attained
(Remark~\ref{rem:sharp}), so an analytic proof for $n=3$ along the lines of Section~\ref{sec:partI}
would need a multi-level anti-concentration estimate: a bound
$\mathsf M(\xi_{k+m})\le c\,\mathsf M(\xi_k)$ with $c<1$ for some $m\ge2$, on a class of laws that the map
preserves. Theorem~\ref{thm:B} covers $n=3$ only for $T\le4/15$.

\emph{An alternative route for $n\ge4$.}
If $\Pp(|\xi_k|\le u)\le F(u)$ and $|\xi_k|$ has a nonincreasing density on $(0,\infty)$, then
$\Hyp_k(r,w)$ holds with $E_k=\{|\xi_k|\ge u_0\}$, $w=F(u_0)$ and $r=\frac12\inf_{0<v\le u_0}F(v)/v$,
because $v\,f_{|\xi_k|}(|x|)\le\int_0^vf_{|\xi_k|}\le F(v)$ for $|x|\ge u_0\ge v$. For $n\ge4$ a bridge
can therefore be handed to Section~\ref{sec:partI} in place of the family. We do not use this
route; we expect it to need more bridge levels.

\emph{$\pm J$ couplings.} For atomic laws all effective couplings are atomic,
Lemma~\ref{lem:U} fails, and the transfer theorem needs decay of the collision probabilities
$\Pp(||\xi_k|-|\xi_k'||<t)$, summable over levels. For $n=2$ order fails \cite{OO2026}, and for
larger scale factors it fails for those even $n$ that satisfy their sufficient condition; odd $n$
and the even $n\ge4$ of scale factor $2$ are open, as Okuyama and Ohzeki note.

\emph{Closing the gap to $T_c$.} A proof for every $T<T_c(n)$ needs an argument near the critical
fixed point. The constants $\kappa_Q,\kappa_C$ are astronomically small because the transfer uses
a union bound over all levels of the ancestor chain (Lemmas~\ref{lem:S} and~\ref{lem:Bprime}),
dominated by the levels near the critical scale (Remark~\ref{rem:scope}(d)); a product-form bound
along the ancestor chain would improve them, but is not available.

\emph{$\Z^d$.} On $\DL_N$ imbalance takes the degenerate form described in
Section~\ref{sec:cmr-intro}, one giant pole cluster plus sub-extensive dust. On $\Z^d$ the expected
picture, if there is order, is two infinite blue clusters of unequal densities \cite{MNS2008}; in
limits of torus measures at most two infinite blue clusters can exist \cite{Pei2026b}, and blue
percolation is proved in high dimensions, but imbalance is open \cite{Pei2026}.

\emph{General scale factors.} For scale factor $b\ge3$ and $n$ branches we expect the analogue of
Proposition~\ref{prop:A} to have the constant $\kappa^{(b)}_n=p_b^{*n}(0)$,
$p_b(x)=b(1-2|x|)^{b-1}$ on $[-\frac12,\frac12]$ (so $\kappa_g=\kappa^{(2)}_g$), and
$\kappa^{(b)}_n<1$ holds for instance for $(n,b)=(7,3)$, where, by exact evaluation,
$\kappa^{(3)}_7=18905897228749/19864965120000$. The transfer lemmas of Section~\ref{sec:transfer}
are written for $b=2$; we do not pursue general $(n,b)$ here.

\appendix

\section{Proofs of auxiliary lemmas}\label{app:proofs}

This appendix proves, in order of appearance, the routine lemmas of
Sections~\ref{sec:recursion}--\ref{sec:partII}; their statements are in the main text.

\begin{proof}[Proof of Lemma~\ref{lem:struct}]
(a) Vertices created inside $D$ receive edges only inside $D$. (b) Induction on $N$: the
sub-diamonds of $\DL_N$ are $\DL_N$ and the sub-diamonds of its $2n$ halves; a sub-diamond of a
half $H$ other than $H$ has its interior inside $\operatorname{int}H$; distinct halves have
disjoint interiors. (c) The effective couplings are functions of disjoint sets of couplings.
(d) The vertices of level at least $N-m+1$ are the non-pole vertices of the copy of $\DL_m$ in
which every edge has been replaced by a scale-$(N-m)$ sub-diamond, so
$|W_m|=\sum_{i=0}^{m-1}n(2n)^i=v_m$. A vertex $x$ of level at most $N-m$ lies in the interior of
a unique scale-$(N-m)$ sub-diamond, which we denote $D_{N-m}(x)$ (uniqueness by (b)). If $xy$ is
an edge and both endpoints
avoid $W_m\cup\{A,B\}$, then $xy$ is an edge of $D_{N-m}(x)$ by (a) and $y$ is not a pole of it, so
$D_{N-m}(y)=D_{N-m}(x)$. Finally $v_k+c=2n(v_{k-1}+c)$ with $c=n/(2n-1)\in(0,1]$, so
$v_{N-m}\le(2n)^{-m}(v_N+c)\le(2n)^{-m}|\V_N|$.
\end{proof}

\begin{proof}[Proof of Lemma~\ref{lem:rec}]
(a) Flip all interior spins. (b) Series:
$\sum_{m}e^{\xi_1sm+\xi_2ms'}=2\cosh(\xi_1s+\xi_2s')\propto e^{\xi ss'}$ with
$\tanh\xi=\tanh\xi_1\tanh\xi_2$; parallel: weights multiply. (c) The law of $\sigma_A\sigma_B$ is
proportional to $e^{\xi_N\sigma_A\sigma_B}$, and $q_Aq_B=-1$ iff the two replicas disagree on
$\sigma_A\sigma_B$, which has probability $2p(1-p)=\frac12\sech^2\xi_N$ with $p=p(\xi_N)$ as in
Section~\ref{sec:transfer}. (d) Sum out the interiors of the halves and of the other branches.
\end{proof}

\begin{proof}[Proof of Lemma~\ref{lem:series}]
(E0): $\tanh$ is increasing on $[0,\infty)$ with values in $[0,1)$, so $\tanh a\tanh b$ is
symmetric, nondecreasing in each argument and at most $\min(\tanh a,\tanh b)$. (E1) follows from the displayed identity for $e^{2(a\star b)}$ by writing
$\cosh(a+b)=\frac12e^{a+b}(1+e^{-2(a+b)})$ and $\cosh(b-a)=\frac12e^{b-a}(1+e^{-2(b-a)})$.
(E2) If $b\le u$ then $a\star b\le b\le u$. If $b>u$ then $a\star b\le u$ iff
$\tanh a\le\tanh u/\tanh b$. Finally $(1+\tanh u/\tanh b)/(1-\tanh u/\tanh b)=\sinh(b+u)/\sinh(b-u)$.
\end{proof}

\begin{proof}[Proof of Lemma~\ref{lem:markov}]
Let $\Omega$ denote the four pole spins of $\DL_N$ and $P_D$ the four spins at the poles of $D$.
Given $\Omega$ the replicas are independent, and conditioning further on $P_D$, an intersection of
one event for each replica, keeps them independent. In one replica the fixed-pole Gibbs measure is
a Markov random field, and by Lemma~\ref{lem:struct}(a) the set $\{a_D,b_D\}$ separates
$\operatorname{int}D$ from the rest of $\DL_N$. Hence given $(\Omega,P_D)$ each replica restricted
to $\operatorname{int}D$ has the fixed-pole law of $D$. This law does not depend on $\Omega$
(the poles $A,B$ are not interior to $D$), so it is also the conditional law given $P_D$ alone.
Bonds are independent given spins. Part (b) follows from Lemma~\ref{lem:rec}(d) and from the
argument of (a) with $P_D$ replaced by the spins at $a_D,y,b_D$, which separate
$\operatorname{int}H^{(1)}$, $\operatorname{int}H^{(2)}$ and the rest of $\DL_N$ by
Lemma~\ref{lem:struct}(a).
\end{proof}

\begin{proof}[Proof of Lemma~\ref{lem:rates}]
For $Y\ge0$ and a nonincreasing $C^1$ function $h$ with $h(\infty)=0$,
$\E h(Y)=\int_0^\infty(-h'(s))\Pp(Y<s)\,ds$; apply this with $h=\frac12\sech^2$, $\psi$,
$e^{-4s}$, $\sech^2$ and $1-\tanh$. Conditioning on the second half and using its independence
from the first (Lemma~\ref{lem:struct}(c)),
$\Pp(\Delta_k<s)\le\sup_y\Pp(||\xi^{(1)}_k|-y|<s)\le\min\{1,Q_{k-1}(s)\}$; and
$\Pp(\Xi_k<s)=\Pp(|\xi_{k-1}|<s)^2$. For the comparison, put $h_{\rm s}(s):=\frac12\sech^2s$.
Since $\psi(s)-h_{\rm s}(s)=p(s)^2$ is nonincreasing,
$-\psi'(s)\ge-h_{\rm s}'(s)$ for $s\ge0$.
Multiplying by $\min\{1,Q_{k-1}(s)\}$ and integrating proves
$\as_k\le\ab_k$, because the second integral defining $\ab_k$ is nonnegative.
\end{proof}

\begin{proof}[Proof of Lemma~\ref{lem:rogozin}]
For $m_i=\mathsf M(X_i)$ this is Rogozin's theorem (Section~\ref{sec:rogozin}; in
\cite[Thm.~1.4]{MMX2017} take $d=1$, $k=1$, $T=(1,\dots,1)$). It remains to show that $\Phi_g$ is
nonincreasing in each width. The sum $W$ of the other uniform variables is symmetric unimodal
by Wintner's theorem \cite{Wintner1938,DJ1988} (see Section~\ref{sec:su}), and the density at $0$ of $U+W$, with
$U$ uniform of width $v$ and independent of $W$, is $v^{-1}\Pp(|W|\le v/2)$; since
$t\mapsto\Pp(|W|\le t)$ is concave and nonnegative, $t\mapsto\Pp(|W|\le t)/t$ is nonincreasing.
\end{proof}

\begin{proof}[Proof of Lemma~\ref{lem:ratesH}]
Lemma~\ref{lem:Hcons}, and the integrals $\int_0^\infty\sech^2s\tanh s\,ds=\int_0^\infty s\sech^2s\tanh s\,ds=\frac12$,
$\psi(0)=\frac34$, $\int_0^\infty\psi=\frac12\ln2+\frac14$, $\int_0^\infty4e^{-4s}(w+2rs)^2\,ds=w^2+wr+\frac12r^2$,
$\int_0^\infty2t\sech^2t\tanh t\,dt=1$ and $\int_0^\infty t\sech^2t\,dt=\ln2$.
\end{proof}

\begin{proof}[Proof of Lemma~\ref{lem:U}]
$K_1\star K_2=e_1e_2(|K_1|\star|K_2|)$ with $e_1e_2$ a fair sign independent of the moduli, so it
suffices that $|K_1|\star|K_2|$ has a concave distribution function on $[0,\infty)$. By Khintchine
$|K_i|=U_i\Theta_i$ with $U_i$ uniform on $[0,1]$; mixtures preserve concavity, and
$\Theta_1\Theta_2=0$ gives $0$. So let $X,Y$ be independent and uniform on $[0,a]$ and $[0,b]$, $a,b>0$. For $z>0$ write
$s=\tanh z$, $t_a=\tanh a$, $t_b=\tanh b$. Differentiating $\Pp(X\star Y\le z)$ with (E2) of
Lemma~\ref{lem:series} and substituting $u=\tanh x$ gives the density
\[
\rho(z)=\frac1{ab}\int_{s/t_b}^{t_a}\frac{(1-s^2)u}{(u^2-s^2)(1-u^2)}\,du
=\frac1{2ab}\log\frac{(t_a^2-s^2)(t_b^2-s^2)}{(1-t_a^2)(1-t_b^2)s^2}
\qquad(s<t_at_b),
\]
and $\rho(z)=0$ for $s\ge t_at_b$, using
$(1-s^2)u/((u^2-s^2)(1-u^2))=u/(u^2-s^2)+u/(1-u^2)$. Each factor inside the logarithm decreases in
$s$, and the logarithm vanishes at $s=t_at_b$; so $\rho$ is nonincreasing on $(0,\infty)$ and there is
no atom. The statement for $\xi_k$ follows by induction from Wintner's theorem and the Gaussian base.
For $\minT$, the distribution function $1-(1-F)^2$ is concave when $F$ is concave, nondecreasing and
$[0,1]$-valued.
\end{proof}

\section{Certificates and their verification}\label{app:computation}

This appendix describes the computer-verified part of the paper. Appendix~\ref{sec:gauss-certs}
gives the certificates for Theorem~\ref{thm:A} (Tables~\ref{tab:thresholds} and~\ref{tab:oneshot})
and how they are checked. Appendices~\ref{app:decomp}--\ref{sec:checks} concern the five certificates of Theorem~\ref{thm:B} (Table~\ref{tab:points}): Appendix~\ref{app:decomp} proves the reduction of condition (B$j$) to pairwise sums, a step of the proof, and Appendices~\ref{sec:arith} and~\ref{sec:checks} give the arithmetic model and the individual checks. Appendix~\ref{sec:validation} reports that every certificate of both theorems passes and records validation runs, which are not part of the proof, and Appendix~\ref{sec:repro} gives the commands that reproduce every certified number. The code is at
\url{https://github.com/PeaBrane/mk-spin-glass-order}, commit
\href{https://github.com/PeaBrane/mk-spin-glass-order/tree/\repocommit}{\texttt{\repocommit}}.

\subsection{Gaussian certificates for \texorpdfstring{Theorem~\ref*{thm:A}}{Theorem A}}\label{sec:gauss-certs}

For standard Gaussian couplings, $\phi=1/\sqrt{2\pi}$. A certificate is found by a floating-point
search (per-level choice of $(L_k,R_k)$, and at every level an attempt to close the tail with
Lemma~\ref{lem:T}), frozen as exact rational input, and then verified in exact rational
arithmetic by the checker of the repository (pure Python, standard library only; entry point
\texttt{partI/check\_partI.py}, arithmetic in \texttt{partI/propa.py}): $\phi$, $e^{-2L}$, $e^{-2R}$ and $\ln2$ are enclosed by series with
directed rounding, every propagated quantity ($r_k$, $w_k$, $\varpi$ and the sums defining $\Gamma$)
is rounded upward, the constants $\kappa_g$ are exact rationals, and every hypothesis of
Definition~\ref{def:certA} is decided exactly. The imbalance claim is tested in the form
$(1-2\ell_{\rm b})_+^2-P>0$, which is equivalent to $2\ell_{\rm b}+\sqrt P<1$. The printed bounds
are rounded in the safe direction. Further implementations, used for validation, are described in
Appendix~\ref{sec:validation}.

\emph{One-shot certificates.} A certificate that closes at level $0$ ($K=0$) suffices for pole
order. With
$r_0=\phi/\beta$, $w=Ar_0^2$, $\rho=r_0/(1-Ar_0^2)$, $\varpi=2Ar_0^2+(2\rho R)^2+4\rho L$ and
$\vartheta=(1+e^{-2L})/\tanh R$, the hypotheses of Lemma~\ref{lem:T} at $K=0$ read
\begin{equation}\label{eq:oneshot}
\vartheta\sum_{g=1}^n\binom ng\kappa_g\,\varpi^{\,n-g}\le\gamma<1,\qquad \varpi^n\le A\gamma^2r_0^2,
\qquad Ar_0^2<1 .
\end{equation}
For $n=4$ the first is $\vartheta(\kappa_4+4\kappa_3\varpi+6\kappa_2\varpi^2+8\varpi^3)\le\gamma$. Each row of
Table~\ref{tab:oneshot} satisfies \eqref{eq:oneshot} in exact rational arithmetic with rigorous
one-sided bounds for $\phi$, $e^{-2L}$ and $\tanh R$; the check needs only a few lines of arithmetic.
These certificates give $\Hyp_k(r_0\gamma^k,Ar_0^2\gamma^{2k})$ and hence
Theorem~\ref{thm:A}(i) and the upper bound (v), but their bulk constants are not small enough for
positive bounds in (ii)--(iv) and (vi); the bulk thresholds of Table~\ref{tab:thresholds} use
multi-level certificates. The rows of Table~\ref{tab:oneshot} were checked in interval arithmetic
and re-verified in exact rationals in a separate audit by a reviewer who had not written the
checker.

\begin{table}[htb]
\centering
\begin{tabular}{rccccc}
\toprule
$n$ & $\beta$ & $L$ & $R$ & $A$ & $\gamma$\\
\midrule
4 & $525$ & $5/2$ & $21/4$ & $1/100$ & $499833/500000$ \\
5 & $135$ & $17/10$ & $4$ & $1/1000$ & $498759/500000$ \\
6 & $86$ & $7/5$ & $7/2$ & $1/10000$ & $997311/1000000$ \\
7 & $68$ & $6/5$ & $16/5$ & $1/10000$ & $199287/200000$ \\
8 & $59$ & $1$ & $3$ & $1/10000$ & $994437/1000000$ \\
12 & $44$ & $3/5$ & $13/5$ & $1/10000$ & $992579/1000000$ \\
16 & $36$ & $3/10$ & $7/3$ & $1/10000$ & $498581/500000$ \\
32 & $63/5$ & $1/1000$ & $5/4$ & $1/10000$ & $499357/500000$ \\
64 & $47/5$ & $1/1000$ & $5/6$ & $1/10000$ & $124603/125000$ \\
\bottomrule
\end{tabular}
\caption{One-shot pole certificates for standard Gaussian couplings, in exact rationals. With
$r_0=\phi/\beta$ each row satisfies the three inequalities \eqref{eq:oneshot}, hence the
hypotheses of Lemma~\ref{lem:T} at $K=0$. Consequently $\Hyp_k(r_0\gamma^k,Ar_0^2\gamma^{2k})$ holds for all
$k$, and Theorem~\ref{thm:A}\textup{(i)} and the upper bound \textup{(v)} hold for every larger
$\beta$; the bulk constants of these certificates give no positive bounds in
\textup{(ii)--(iv)} and \textup{(vi)}.}
\label{tab:oneshot}
\end{table}

\subsection{Reducing \texorpdfstring{(B$j$)}{(Bj)} to pairwise sums}\label{app:decomp}

\begin{lemma}[Decompositions]\label{lem:decomp}
Condition \textup{(B$j$)} holds if there are s.u.\ magnitude laws $\omega_1,\dots,\omega_n$ with
$\omega_1\lpk T_\star(\nu_j)$, a sequence of steps $\omega_k\lpk\omega_{i_1}\oplus\omega_{i_2}$ with
$k=i_1+i_2$, each of whose inputs is $\omega_1$ or an earlier output, and whose last output is
$\omega_n$ (for example sequential,
$\omega_{k+1}\lpk\omega_k\oplus\omega_1$, or binary doubling, $\omega_{2k}\lpk\omega_k\oplus\omega_k$), and
$\nu_{j+1}\lpk\omega_n$. Any s.u.\ magnitude law $\nu'$ with $F_{\nu'}\ge F_\nu$ may replace a law $\nu$
in such a chain.
\end{lemma}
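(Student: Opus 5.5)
The plan is to show, by induction along the sequence of steps, that every intermediate law is more peaked than the corresponding partial sum of the true $n$-fold convolution. Concretely, let $\mu:=T_\star(\nu_j)$, an s.u.\ magnitude law by Lemma~\ref{lem:U}. For $k\ge1$ write $\mu^{\oplus k}$ for the law of $|\sum_{r\le k}e_rZ_r|$ with $Z_r$ iid $\mu$ and independent fair signs $e_r$. By associativity and commutativity of $\oplus$ (Corollary~\ref{cor:monotone}) we have $\mu^{\oplus i_1}\oplus\mu^{\oplus i_2}=\mu^{\oplus(i_1+i_2)}$, and by definition $R(\nu_j)=\mu^{\oplus n}$. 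All these laws are s.u.\ magnitude laws, by Lemma~\ref{lem:U} (closure under $\oplus$).

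The claim to prove by induction over the steps is $\omega_k\lpk\mu^{\oplus k}$ for every index $k$ that occurs. The base case is the hypothesis $\omega_1\lpk T_\star(\nu_j)=\mu$. For a step $\omega_k\lpk\omega_{i_1}\oplus\omega_{i_2}$, both inputs are $\omega_1$ or earlier outputs, so the induction hypothesis gives $\omega_{i_1}\lpk\mu^{\oplus i_1}$ and $\omega_{i_2}\lpk\mu^{\oplus i_2}$. We then apply the monotonicity of $\oplus$ in its first argument (Corollary~\ref{cor:monotone}) twice, using commutativity to move the second argument into first position:
\[
\omega_{i_1}\oplus\omega_{i_2}\lpk\mu^{\oplus i_1}\oplus\omega_{i_2}=\omega_{i_2}\oplus\mu^{\oplus i_1}\lpk\mu^{\oplus i_2}\oplus\mu^{\oplus i_1}=\mu^{\oplus k}.
\]
Both uses need the laws involved to be s.u.\ magnitude laws. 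The $\omega$'s are s.u.\ by hypothesis, and the $\mu^{\oplus m}$ are s.u.\ as noted above. Since $\lpk$ is transitive (it is a pointwise inequality of distribution functions), we get $\omega_k\lpk\mu^{\oplus k}$. At the last output, $\nu_{j+1}\lpk\omega_n\lpk\mu^{\oplus n}=R(\nu_j)$, which is (B$j$).

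For the replacement clause: if $\nu'$ is s.u.\ with $F_{\nu'}\ge F_\nu$, then $\nu'\lpk\nu$. Inserting $\nu'$ in place of $\nu$ in any input only strengthens each relation, by the same monotonicity and transitivity, so the chain of inequalities still closes.

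The only point needing care is the applicability of Corollary~\ref{cor:monotone}, because Lemma~\ref{lem:M}(b) requires the fixed summand to be s.u. This is guaranteed here, since every law in the chain is assumed or shown to be an s.u.\ magnitude law. I also expect the identification $\mu^{\oplus k}$ to be well defined for arbitrary bracketings to need a remark. It is, because a sum of independent symmetric signed variables has a law independent of the grouping, and in $|e_1|X_1|\pm\cdots|$ the fair signs may be absorbed, since $e\,|e'X|$ has the same law as $e'X$ when the sign $e$ is independent and fair.
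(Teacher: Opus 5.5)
Your proposal is correct and is essentially the paper's proof: induction along the steps, using monotonicity of $\oplus$ in each argument (Corollary~\ref{cor:monotone}) together with associativity and commutativity, and then transitivity at $k=n$. The only difference is cosmetic. The paper passes through the intermediate comparison $\omega_k\lpk\omega_1^{\oplus k}\lpk T_\star(\nu_j)^{\oplus k}$, whereas you compare directly with $T_\star(\nu_j)^{\oplus k}$.
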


\begin{proof}
By induction and Corollary~\ref{cor:monotone},
$\omega_k\lpk\omega_1^{\oplus k}\lpk T_\star(\nu_j)^{\oplus k}$; take $k=n$ and use transitivity.
\end{proof}

A coarsening of a law onto a coarser grid is allowed in this chain only if its distribution
function lies above the original one and is again concave; interpolating a concave distribution
function at coarser knots produces a chord below it, which is the wrong direction. Check (V1) of
Appendix~\ref{sec:checks} verifies each coarsening as a comparison of this kind.

\subsection{Arithmetic model}\label{sec:arith}

The certificates of Theorem~\ref{thm:B} are checked in the following arithmetic model.

\emph{Laws.} A law is a staircase law: an atom at $0$ and masses on the bins of a uniform grid, stored
as integers in units of $2^{-48}$, with nonincreasing bin masses that, together with the atom, sum
exactly to $1$, that is, to $2^{48}$ units. Its distribution function is concave and piecewise
linear, which is (S). The shape $X_c$ is such a law with no atom and support in $[0,x_{\max}]$; its
distribution function $F_c$ is evaluated in units of its grid.

\emph{Exact parts.} The $n$-fold sums, $\minT$, coarsenings, the output of $T_\star$,
(C1)--(C2), and (EN) are decided with integers and exact rationals. Polynomial products use Kronecker
substitution, validated against the naive product.

\emph{Transcendental parts.} The values of $\delta$, $\operatorname{erf}$, $\psi$, $e^{-4s}$ and $\ln2$ are
enclosed by interval arithmetic on 64-digit decimals with rounding toward $-\infty$ and $+\infty$;
$\exp$, $\log$, $\arctan$ and $\operatorname{erf}$ are Taylor series with explicit remainder bounds,
$\pi$ is computed by Machin's formula, and square roots are verified by squaring. Integers are taken as
the ceiling of an upper endpoint or the floor of a lower endpoint, in the direction that keeps an upper
bound of a distribution function an upper bound. Floating-point numbers are used in the search, and in
the check only to make choices (tangent points, argument reductions, the $\operatorname{erf}$ branch
and working precision) whose validity is re-verified exactly or does not depend on the choice; no
floating-point comparison decides an inequality or the termination of a series. The checker accepts
only integer values where integers are expected, and records the SHA-256 hashes of all inputs.

\subsection{The checks}\label{sec:checks}

On the grid of mesh $h$, bin $k$ is $[kh,(k+1)h]$, and a law has knot values (the atom plus the
first $k$ masses) and bin masses. Checks (F1)--(F4) verify the family and (V0)--(V4) the bridge and
the entry condition. Every law is validated as a staircase law when it is read or produced, which gives
(S) for the bridge laws and the first part of (F1) for the family; (V0) verifies (B0), (V1)--(V3) give
(B$j$) through Lemma~\ref{lem:decomp}, and (V4) verifies (EN); (K) evaluates the constants.
\begin{enumerate}
\item[(F1)] \emph{Validity.} Every law is a staircase law; $X_c$ is atomless and supported in the first
bins up to $x_{\max}$.
\item[(F2)] \emph{$\minT$.} $G_1\ge$ the distribution function of $\min(X_1,X_2)$, $X_i$ iid $X_c$. On
each bin the slack is a quadratic polynomial in the position within the bin, and nonnegativity is
decided exactly at both ends and at the vertex.
\item[(F3)] \emph{Sums.} $G_m\ge$ the distribution function of $\eta_{m-1}\oplus\eta_1$ for
$m=2,\dots,n$; the exact distribution function of $|X+Y|$ for independent signed versions $X,Y$ of two
staircase laws is piecewise quadratic, and the same per-bin quadratic test applies. Together with (F2)
this is (C0).
\item[(F4)] \emph{Family conditions.} (C1) and (C2) are evaluated exactly at both ends of every
elementary interval between consecutive knots of $F_c(\cdot/\lambda)$ and of all $G_m$, each interval
with its own slopes; the checker uses $\max_{m'\ge m}\sH_{m'}\ge\hsH_m$, a stronger condition. On each
elementary interval the margin of (C2) is concave (affine minus a positive combination of maxima of
affine functions), so the endpoints suffice. The parameter conditions $\lambda>1$,
$A,\bareps>0$ and $2A\bareps\le1$ are checked as well.
\item[(V0)] \emph{Level $0$.} On each bin where $F_0<1$, the function $\operatorname{erf}(\cdot/(\beta\sqrt2))-F_0$
is concave; its tangent line at a point of the bin is enclosed in intervals and must be nonpositive at
both ends.
\item[(V1)] \emph{Coarsening.} When the grid mesh doubles, the new distribution function must be at least
the old one; both are linear on each old bin, so comparing knot values suffices.
\item[(V2)] \emph{Series map $T_\star$.} On a fine grid $u_i$, integers $U_i$ with
$U_i2^{-48}\ge\Pp(Y_1\star Y_2\le u_i)$ are computed from (E2) of Lemma~\ref{lem:series}: $Y_2$ is
uniform within each bin, $\iota(u,b)$ is
decreasing in $b$, and $\iota$ is enclosed through the identity $\iota=u+\delta(b-u)-\delta(b+u)$ with
upward-rounded tables for $\delta(b-u)$, downward-rounded tables for $\delta(b+u)$, and a tail term using
$\delta(b+u)>0$. The distribution function $F$ of the output law must satisfy
$F(u_i)\ge U_{i+1}2^{-48}$; then $F(t)\ge\Pp(Y_1\star Y_2\le t)$ on $[u_i,u_{i+1}]$ by monotonicity.
\item[(V3)] \emph{$n$-fold sum.} The sum over $n$ branches is built by a fixed plan of pairwise sums
(for example $1+1\to2$, $2+2\to4$, $4+4\to8$ for $n=8$), each checked as in (F3); by
Lemma~\ref{lem:decomp} this gives (B$j$).
\item[(V4)] \emph{Entry.} (EN) with $L_0=c_0^+/\bareps$ and $\alpha_0'=Ac_0^-/L_0$, where $c_0^\pm$ are
rationals with $c_0^-<c_0<c_0^+$ verified by the interval enclosure of $\ln2$; the check is exact at the
knots of both piecewise-linear sides.
\item[(K)] \emph{Constants.} For $l-1\le J$ the bound $\bar\phi_l$ is evaluated bin by bin as an exact
rational, after replacing each transcendental factor by the ceiling of its upper endpoint; for
$l-1>J$ the closed form of Section~\ref{sec:proofB} is used with the rational upper bound $c_0^+$ and a
rational upper bound for $\frac12\ln2+\frac14$. The sums \eqref{eq:kappas} are evaluated in exact
rationals, truncated (dropping nonnegative terms), and printed as lower bounds; the base-$10$
logarithms in Table~\ref{tab:points} are rounded down.
\end{enumerate}

\emph{Trusted base.} Soundness rests on the modules that evaluate the inequalities and the enclosures,
and on the drivers that sequence them. The plan of pairwise sums in (V3) is itself verified before
use: each step adds two labels that are already available, the new label is their sum, and the last
label is $n$. The constructors of candidate laws are not trusted: their outputs are always
re-verified, so an error there can only cause a rejection.

\subsection{Results and validation}\label{sec:validation}

All five certificates of Theorem~\ref{thm:B} and all certificates behind
Tables~\ref{tab:thresholds} and~\ref{tab:oneshot} pass. The whole verification uses only the Python
standard library, and the five certificates of Theorem~\ref{thm:B} take minutes each on one core.
The directories \texttt{partI/} and \texttt{partII/} of the repository contain the checks of the certificates of Tables~\ref{tab:thresholds} and~\ref{tab:oneshot} (Theorem~\ref{thm:A}) and of Table~\ref{tab:points} (Theorem~\ref{thm:B}). The following checks are not part of the proof.
\begin{itemize}
\item Before the inputs were frozen, the multi-level certificates of the original search for
Theorem~\ref{thm:A} were accepted by two earlier implementations, one in Arb ball arithmetic
\cite{Arb} and one in mpmath interval arithmetic \cite{mpmath}, sharing no code with each other;
the second is included in the repository, and the published checker is a port of them. Two
further implementations, written independently of all three from the statements of
Proposition~\ref{prop:A}, Lemma~\ref{lem:T} and Section~\ref{sec:transfer} alone (one in exact
rationals, one in ball arithmetic), accept every frozen certificate; their code and outputs are in
\texttt{validation/} of the repository.
\item Three implementations of the branch sums, the most expensive step, produce the same laws
bit for bit; a hash of the whole chain of laws agrees across machines and across two
implementations of the transcendental tables.
\item The repository runs $46$ negative controls, each changing one thing in a frozen input, and all
are rejected. For the certificates of Theorem~\ref{thm:B} ($n=8$ unless stated): lowering $J$ by one; lowering $\beta$ by $0.1$ with
the frozen schedule (from $T^*=2$ to $T=5/2$); $\lambda=2$; $\bareps$ multiplied by $10$; a
violation of (C1) by one unit at $t=0$ ($n=4$); a wrong family hash; a wrong $n$, on the command
line and in the file; a JSON float for $J$, and for a mass ($n=4$); a per-law hash mismatch
($n=4$); two tampered plans of pairwise sums; and a tampered manifest hash. For the certificates of Theorem~\ref{thm:A}: $\gamma$ lowered by $10^{-5}$ or
multiplied by $0.999$; $A$ divided by $1000$; every certificate of Table~\ref{tab:thresholds} at
$\beta-0.1$ with the same parameters; a dropped last level; a wrong $n$; a one-shot certificate of
Table~\ref{tab:oneshot} claiming order; $\beta$ written as a JSON float; and a tampered manifest
hash.
\item In a separate audit of the checker by a reviewer who had not written it, the verification
was repeated from the frozen inputs, on a different machine and with the standard library only;
it reproduced $J$, the margins, the hashes of the chains of laws and the constants. The same audit
built inputs that violate (C1), (C2), (C0), (V3), (V0) and (EN) by a small amount, each of which
passes a weaker test, and the checker rejected all of them. This audit and rerun used an earlier
version of the checker; the published version
adds the checks described in Appendices~\ref{sec:arith}--\ref{sec:checks} (verified plan labels, an
exact stopping rule for the $\operatorname{erf}$ series, strict input types, an explicit test of
$2A\bareps\le1$) and reproduces the same laws, margins and constants.
\item A second implementation, written earlier with its own search and its own exact checker, certifies
$n=3,4,5,8$ at the same temperatures, and $n=7$ once its checker is extended to verify the
intermediate sums of the plan $1+1\to2$, $2+2\to4$, $4+2\to6$, $6+1\to7$ (the unextended checker
accepts only plans whose intermediate sums are powers of two or $n$). Its code, the extension and the
records of both runs are in \texttt{validation/} of the repository.
\end{itemize}
The construction of $X_c$ (a scaled floating-point approximation of a zero-temperature fixed shape),
the choice of grids, schedules and parameters, and the population-dynamics estimates of $T_c(n)$ are
heuristic and play no role in the proof.

\subsection{Reproduction}\label{sec:repro}

The repository contains the checker, the frozen inputs with their hashes, the scripts that produce
Tables~\ref{tab:thresholds}, \ref{tab:points} and~\ref{tab:oneshot} from result files, and the certificates of
Appendix~\ref{sec:gauss-certs}. The command
\begin{verbatim}
sh run_all.sh
\end{verbatim}
verifies every certified number in this paper, runs the negative controls for both theorems, regenerates
the three table bodies and compares them with the committed ones; it needs Python 3.9 or later and no
third-party package, and takes about 7 minutes on one core. Single steps can be run separately, for
example the certificates of Theorem~\ref{thm:A} by
\begin{verbatim}
python3 partI/check_partI.py --manifest partI/inputs/SHA256SUMS.txt --out-dir out
\end{verbatim}
and one row of Table~\ref{tab:points} by
\begin{verbatim}
python3 partII/check.py --n 4 --point partII/inputs/point_n4.json \
        --kernel python --threads 1 --out check_n4.json
\end{verbatim}

The code and \texttt{partII/CONDITIONS.md} use the labels of an earlier write-up: its
Theorems~1A, 1B and~1 are Theorems~\ref{thm:family}, \ref{thm:bridge} and~\ref{thm:growth} here
(not Theorems~\ref{thm:A} and~\ref{thm:B}), its Theorems~2 and~3 are items (ii)--(iv) of
Theorem~\ref{thm:B}, its condition (E) is (EN), and its conditions (B$j$.1) and
(B$j$.2)--(B$j$.3) are checked by (V2) and (V3) of Appendix~\ref{sec:checks}, respectively.


\end{document}